\newcommand{\subsectionruninhead}{\@startsection{subsection}{2}{0mm}
{-\baselineskip}{-0mm}{\bf\large}}
\newcommand{\subsubsectionruninhead}{\@startsection{subsubsection}{3}{0mm}
{-\baselineskip}{-0mm}{\bf\normalsize}}
\newtheorem*{theorem*}{Theorem}
\newtheorem{theoremalph}{Theorem}
\newtheorem*{proposition*}{Proposition}
\newtheorem*{corollary*}{Corollary}
\newtheorem*{claim*}{Claim}
\newtheorem*{remark*}{Remark}
\newtheorem*{problem*}{Problem}
\newtheorem{theorem}{Theorem}[section]
\newtheorem{proposition}[theorem]{Proposition}
\newtheorem{corollary}[theorem]{Corollary}
\newtheorem{lemma}[theorem]{Lemma}
\newtheorem{notation}[theorem]{Notation}
\newtheorem{claim}[theorem]{Claim}
\theoremstyle{definition}
\newtheorem{definition}[theorem]{Definition}
\newtheorem{remark}[theorem]{Remark}
\numberwithin{equation}{section}
 \def\RR{{\mathbb R}}
\def\cC{\mathcal{C}}    \def\cU{\mathcal{U}}
    \def\cV{\mathcal{V}}
\def\cF{\mathcal{F}}    \def\cX{\mathscr{X}}
\newcommand{\supp}{\operatorname{Supp}}
\newcommand{\orb}{\operatorname{Orb}}
\newcommand{\ind}{\operatorname{Ind}}
\newcommand{\sing}{\operatorname{Sing}}
\newcommand{\per}{\operatorname{Per}}
\newcommand{\jac}{\operatorname{Jac}}
\begin{document}

\title{A $C^r$-connecting lemma for Lorenz attractors and its application on the space of ergodic measures}

\author{Yi Shi\footnote{Y. Shi was partially supported by National Key R\&D Program of China (2021YFA1001900) and NSFC (12071007, 11831001, 12090015).}\, ,\,  
	Xueting Tian\footnote{X. Tian was partially supported by NSFC (12071082) and Science and Technology Innovation Action Program of Science and Technology Commission of Shanghai Municipality (STCSM, No. 21JC1400700).}  \,   
	 and  
	Xiaodong Wang\footnote{X. Wang was partially supported by NSFC (12071285, 11701366), National Key R\&D Program of China (2021YFA1001900) and Innovation Program of Shanghai Municipal Education Commission (No. 2021-01-07-00-02-E00087).}}

\maketitle

\begin{abstract}
	For every $r\in\mathbb{N}_{\geq 2}\cup\{\infty\}$, we prove a $C^r$-connecting lemma for Lorenz attractors. To be precise, for a Lorenz attractor of a $3$-dimensional $C^r$ ($r\geq 2$) vector field,  a heteroclinic orbit associated to the singularity and a critical element can be created through arbitrarily small $C^r$-perturbations. As an application, we show that for $C^r$-dense geometric Lorenz attractors, the Dirac measure of the singularity is isolated inside the space of ergodic measures and thus the ergodic measure space is not connected; while for $C^r$-generic geometric Lorenz attractors, the space of ergodic measures is path connected with dense periodic measures. 
	In particular, the generic part proves a conjecture proposed by C. Bonatti~\cite[Conjecture 2]{b-survey} in $C^r$-topology for Lorenz attractors.
\end{abstract}

\section{Introduction}

In 1963, E. Lorenz~\cite{Lorenz} introduced the following differential equations in the study of meteorology:
\begin{align}\label{Equa:Lorenz}
 \left\{ \begin{array}{ll}
\dot x=\alpha(y-x),\\
\dot y=\beta x-y-xz, \\
\dot z=xy-\gamma z,
\end{array} \right.
\end{align}
which has arisen much interest for dynaminists since then.
For an open neighborhood of the chosen parameters $(\alpha,\beta,\gamma)=(10,28,8/3)$, numerical simulations suggest that the system generated by the solution of~(\ref{Equa:Lorenz})  admits a strange attractor called the {\it Lorenz attractor} which attracts almost all points in the phase space. Dynamics of Lorenz attractors behave in a chaotic way in the sense that it is transitive with dense periodic orbits and thus it is sensitively dependent on initial conditions. Although expressed in a simplified model, Lorenz equations (\ref{Equa:Lorenz}) have been proved to be resistant and  present much difficulty for rigorous mathematical analysis. Thanks to the independent pioneering works of J. Guckenheimer~\cite{guck} and V. Afra$\breve{\rm\i}$movi$\check{\rm c}$-V. Bykov-L. Sil'nikov~\cite{abs}, a geometric model of Lorenz attractor was introduced whose dynamics admits the behaviors observed by Lorenz. The {\it geometric Lorenz attractor} (see Definition~\ref{def:Lorenz}) is defined through a three dimensional smooth vector field and is proved to possess a singularity (or an equilibrium) where the vector field vanishes and which is robustly accumulated by regular orbits. On the other hand, the geometric Lorenz attractor is not  structurally stable~\cite{guck}.  Works of J. Guckenheimer and R. Williams~\cite{gw,williams} gave  well descriptions of the structure of geometric Lorenz attractors which led more studies. 
W. Tucker~\cite{tucher1,tucher2} proved that the system generated from Lorenz original equations~\ref{Equa:Lorenz} admits the geometric model, which answers affirmatively~\cite[Problem 14]{Smale-question} raised by S. Smale.
Due to the existence of the singularity, the geometric Lorenz attractor is not  hyperbolic (see Definition~\ref{Def:hyp}). To describe its geometric structure, C. Morales-M. Pacifico-E. Pujals~\cite{mpp} introduced the notion of {\it singular hyperbolicity} (or {\it sectional hyperbolicity}),  see Definition~\ref{Def:singular-hyp}.
One can refer to~\cite{lgw,Me-Mo} for higher dimensions. 
S. Luzzatto-I. Melbourne-F. Paccaut~\cite{LMP} proved the geometric Lorenz attractor is mixing with respect to its physical measure.
More recently, V. Ara\'ujo-I. Melbourne~\cite{am2} proved some more properties including the existence of SRB measures, the central limit theorem and associated invariance principles.

\medskip
%**********{\color{blue}$C^r$-perturbation}********

In this paper, we prove a $C^r (r\in\mathbb{N}_{\geq 2}\cup\{\infty\})$ connecting lemma for the geometric Lorenz attractor and study its ergodic measure space as an application. Perturbation theory plays a key role in the study of ``most'' dynamical systems, for instance in the exploration of $C^1$-stability conjecture.
C. Pugh's closing lemma~\cite{Pugh} serves as a foundation of $C^1$-perturbation theory. A series of important $C^1$-perturbation theorems were inspired by this milestone work: the $C^1$-closing lemma for conservative and Hamiltonian diffeomorphisms by C. Pugh-C. Robinson~\cite{PR}, the $C^1$-ergodic closing lemma by R. Ma\~n\'e~\cite{mane-ergodic closing}, the $C^1$-closing lemma for nonsingular endomorphisms by L. Wen~\cite{Wen-endo-closing}, the $C^1$-connecting lemma by S. Hayashi~\cite{hayashi}, see also M. Arnaud~\cite{ar}, L. Wen-Z. Xia~\cite{wen-xia-connecting}. The strongest $C^1$-perturbation theorem would be the $C^1$-chain connecting lemma which was proved by C. Bonatti-S. Crovisier~\cite{bc}, see also~\cite{bdv,crovisier,C-habitation} for more applications.
The proofs of these $C^1$-perturbation theorems relies on local perturbation methods introduced by C. Pugh.

Compared to the completed perturbation theory in $C^1$-topology, when $r\geq 2$, the $C^r$-perturbation theory is far from being accomplished.
The $C^r (r\geq 2)$ perturbation problem turns out to be much more delicate and difficult.
C. Gutierrez~\cite{Gu} built an example which showed that C. Pugh's local perturbation method for $C^1$-perturbations does not work for $C^r (r\geq 2)$ case. 
When $r$ is large, the $C^r$-closing lemma is false for the system of Hamiltonian vector field constructed by M. Herman~\cite{herman1,herman2}.
These two counter-examples imply that in general, global perturbations are needed in exploring the $C^r (r\geq 2)$-perturbation problems.
The lack of $C^r$-perturbation techniques makes the study of $C^r$-``most'' dynamics be much difficult. For instance, the  $C^r$-stability conjecture is still widely open, for which one can refer to the survey by E. Pujals~\cite{pujals}.
For lower dimensional dynamical systems, L-S. Young~\cite{Young} proved the $C^r$-closing lemma for interval endomorphisms. 
M. Asaoka-K. Irie~\cite{AI} proved a $C^\infty$-closing lemma for Hamiltonian surface diffeomorphisms. In particular, S. Crovisier-E. Pujals~\cite{CP} proved the $C^\infty$-closing lemma on the support of every ergodic measure for strongly dissipative surface diffeomorphisms, including a large class of H\'enon maps.
For higher dimensional dynamical systems, R. Ma\~n\'e~\cite{mane-h-point} proved a $C^2$-connecting lemma to create homoclinic points under certain assumptions in the measure sense.
Very recently, S. Gan and the first author~\cite{GS-closing} proved the $C^r$-closing lemma for partially hyperbolic diffeomorphisms with 1-dimensional center.

Let $M^d$ (or $M$ for simplicity) be a $d$-dimensional ($d\geq 3$) $C^{\infty}$  smooth closed Riemannian manifold. 
For every $r\in\mathbb{N}\cup\{\infty\}$, we denote by $\mathscr{X}^r(M)$ the set of $C^r$-smooth vector fields on $M$ endowed with $C^r$-topology. Given a vector field $X\in\mathscr{X}^r(M)$. We denote by $\phi_t^X\colon M\rightarrow M$ the $C^r$-flow generated by $X$ and by $\Phi_t^X=D\phi_t^X$ the tangent map of $\phi_t^X$. 
When there is no confusion, we also denote them by $\phi_t$ and $\Phi_t$ for simplicity. 
We denote by $\sing(X)$ and $\per(X)$ respectively the set of singularities and periodic points respectively of $X$.
A point $p$ is \emph{a critical element} of $X$ if $p\in\sing(X)\cup\per(X)$.
Our Theorem~\ref{Thm:A-connecting} is a $C^r$-connecting lemma for geometric Lorenz attractors which creates homo/heteroclinic orbits associated to the singularity with a critical element (the singularity itself or a hyperbolic periodic orbit). 

\begin{theoremalph}
	\label{Thm:A-connecting}
	Let $r\in\mathbb{N}_{\geq2}\cup\{\infty\}$ and $X\in\mathscr{X}^r(M^3)$ which exhibits a geometric Lorenz attractor $\Lambda$ with singularity $\sigma\in\sing(\Lambda)$. Assume $z\in W^u(\sigma)$ and $p\in\Lambda$ is a critical element, then there exists $Y\in\mathscr{X}^r(M^3)$ which is arbitrarily $C^r$-close to $X$, such that
	$$
	\orb(z,\phi_t^Y)\subseteq W^u(\sigma,\phi_t^Y)\cap W^s(p,\phi_t^Y),
	$$  
	with $\orb(\sigma,\phi_t^X)=\orb(\sigma,\phi_t^Y)$ and $\orb(p,\phi_t^X)=\orb(p,\phi_t^Y)$.
\end{theoremalph}

\begin{remark}
(1)	A more detailed statement of Theorem~\ref{Thm:A-connecting} is Theorem~\ref{thm:Lorenz-connecting}.
	The definition (Definition \ref{def:Lorenz}) of geometric Lorenz attractor follows J. Guckenheimer and R. Williams \cite{guck,gw,williams}, since our Theorem \ref{Thm:A-connecting}  (\& Theorem \ref{thm:Lorenz-connecting}) requires that the reducing 1-dimensional map has derivative larger than $\sqrt{2}$. Following this definition, we show that the vector fields exhibiting a geometric Lorenz attractor is an open subset of $\mathscr{X}^r(M^3)$ for every $r\geq2$, see Proposition~\ref{prop:Lorenz}. 
	\medskip
	
(2) In the proof of Theorem~\ref{Thm:A-connecting} \&~\ref{thm:Lorenz-connecting}, we apply a beautiful observation by~\cite{am2} which shows that the induced stable foliation of geometric Lorenz attractors in the cross section is $C^1$-smooth. Moreover, we prove derivatives of holonomy maps of the induced stable foliations vary continuously  with respect to vector fields in $C^2$-topology. This implies the vector fields exhibiting a geometric Lorenz attractor forms a $C^2$-open subset in $\mathscr{X}^r(M^3)$ for every $r\in\mathbb{N}_{\geq 2}\cup\{\infty\}$, see Appendix \ref{Section:robust}.
\medskip

(3)  The critical element $p$ in Theorem~\ref{Thm:A-connecting} can be $\sigma$ itself or a periodic point. Theorem~\ref{Thm:A-connecting}  shows that for geometric Lorenz attractors,  a homoclinic or heteroclinic orbit associate to the singularity can be generated through a $C^r$-perturbation.
It has its own interest in the study of homoclinic bifurcations for geometric Lorenz attractors. For instance, we show that $C^r$-densely, the singularity inside a geometric Lorenz attractor admits a homoclinic orbit (Corollary~\ref{Cor:dense-loop}).
\end{remark}

\subsection{The space of ergodic measures on Lorenz attractors}
As an application of Theorem~\ref{Thm:A-connecting}, we study the space of invariant measures supported on geometric Lorenz attractors of $X\in\mathscr{X}^r(M^3)$ where  $r\in\mathbb{N}_{\geq2}\cup\{\infty\}$. 
This is motivated by the classical works of K. Sigmund~\cite{Sigmund-Axiom A,Sigmund-flow,Sigmund} on the invariant measure space of basic sets for Axiom A systems~\cite{smale} together with R. Ma\~n\'e's ergodic closing lemma~\cite{mane-ergodic closing}.
Given $X\in\mathscr{X}^r(M)$ and an invariant compact set $\Lambda$ of $\phi_t^X$, denote by $\mathcal{M}_{inv}(\Lambda)$ and $\mathcal{M}_{erg}(\Lambda)$ the spaces of invariant measures   and ergodic measures respectively of $X$ supported on $\Lambda$  endowed with the weak*-topology. 
Denote by $\mathcal{M}_{per}(\Lambda)$ the set of periodic measures (the invariant measure equidistributed on a single periodic orbit) supported on $\Lambda$. 

Recall that a basic set of an Axiom A diffeomorphism is a hyperbolic horseshoe which is topologically conjugate to a full shift, and in the case of Axiom A  vector fields, a basic set $\Lambda$ is the suspension of a hyperbolic horseshoe $\Lambda'$ with a continuous roof function. 
Moreover, every basic set is  a homoclinic class, where for a hyperbolic periodic point $p$, its homoclinic class $H(p)$ is the closure of the set of periodic points $q$ homoclinically related to $p$, i.e. $W^s(\orb(q))$ has non-empty transverse intersections with $W^u(\orb(p))$ and vice versa.
When $\Lambda$ is a basic set of an Axiom A system (diffeomorphism or vector field), K. Sigmund~\cite{Sigmund-Axiom A,Sigmund-flow} proved that  $\mathcal{M}_{per}(\Lambda)$ is dense in $\mathcal{M}_{inv}(\Lambda)$ and the set of ergodic measures with zero entropy forms a residual subset (i.e. a dense $G_\delta$ subset) of $\mathcal{M}_{inv}(\Lambda)$. 
Another work of K. Sigmund~\cite{Sigmund} shows that the space of ergodic measures over a basic set of a diffeomorphism is path connected (or arcwise connected). For Axiom A  vector fields, since a basic set $\Lambda$ is the suspension of a hyperbolic horseshoe $\Lambda'$ with a continuous roof function, thus by the one-to-one correspondence between invariant measures over $\Lambda$ and $\Lambda'$ (see for instance~\cite{parry-pollicott}), one has that $\mathcal{M}_{erg(\Lambda)}$ is also path connected. 
More statistical properties of periodic measures on basic sets are obtained by R. Bowen~\cite{Bowen-Axiom A,Bowen-Flow}.
In the non-hyperbolic case,  a recent work of A. Gorodetski-Y. Pesin~\cite{Go-Pe} proves that if an isolated homoclinic class  $H(p)$ of a diffeomorphism satisfies that all hyperbolic periodic orbits in $H(p)$ of the same index with $p$ are homoclinically related with each other, then $\mathcal{M}_{erg}(H(p))$ is path connected. This gives a criterion for path connectedness of the ergodic measure space restricted on homoclinic classes. Other results on connectedness and topological properties of the  invariant measure space for certain systems can be found in~\cite{bbg,GK}. 

Every geometric Lorenz attractor is a homoclinic class in which all periodic orbits are homoclinically related (see Proposition~\ref{prop:Lorenz}), which is similar as basic sets and the isolated homoclinic class where A. Gorodetski-Y. Pesin~\cite{Go-Pe} consider. However, due to the existence of the singularity, the ergodic measure space of a geometric Lorenz attractor may display a quite different nature.
We show that  for $C^r(r\geq 2)$-dense vector fields  $X\in\mathscr{X}^r(M^3)$ admitting a geometric Lorenz attractor $\Lambda$, the singular measure $\delta_\sigma$ is isolated in the space of ergodic measures $\mathcal{M}_{erg}(\Lambda)$ which breaks the connectedness of $\mathcal{M}_{erg}(\Lambda)$.
The key point is by applying Theorem~\ref{Thm:A-connecting} to make the two branches of the unstable manifold of $\sigma$ lie on the stable manifold of a periodic orbit $\orb(p)$ through $C^r$-perturbation.
Then a criterion for isolated ergodic measures (Proposition~\ref{Prop:kick-out singularity}) that we will prove  ensures that the singular measure of the target system is isolated.

Meanwhile, as a sharp comparison of the above dense part, we prove that the denseness of periodic measures inside the space of invariant measures and the path connectedness of the ergodic measure space hold for geometric Lorenz attractors of $C^r$-generic $X\in \mathscr{X}^r(M^3)$ (i.e. vector fields in a residual (dense $G_\delta$)subset of $\mathscr{X}^r(M^3)$), which proves ~\cite[Conjecture 2]{b-survey} for geometric Lorenz attractors in $C^r(r\geq 2)$-topology. This is in spirit of Sigmund's work~\cite{Sigmund-Axiom A,Sigmund-flow,Sigmund}  mentioned above and R. Ma\~n\'e's ergodic closing lemma~\cite{mane-ergodic closing}.
The ergodic closing lemma is a milestone  in dynamical systems and ergodic theory, and it 
plays a key role in solving the stability conjecture.
A classical consequence of the ergodic closing lemma is that for $C^1$-generic diffeomorphisms or vector fields , the set of periodic measures and singular measures are dense in the space of ergodic measures.  
However, things become more complicated to verify the denseness of periodic measures in the space of ergodic measures when restricted to a non-hyperbolic compact  invariant set $\Lambda$. 
The difficulty is how to guarantee that  the periodic measures (maybe obtained by perturbations) provided by the ergodic closing lemma are contained in $\Lambda$, see for instance C. Bonatti's  survey~\cite[Conjecture 2]{b-survey}.
Progresses and some important criterion can be found in the work of F. Abdenur-C. Bonatti-S. Crovisier~\cite{abc}. 
Other works on the approximation of invariant measures by periodic ones can be found in~\cite{btv,bonatti-zhang,dgmr,hirayama,lls,Yang-Zhang}.

In sum, our Theorem~\ref{Thm:B} gives a sharp contrast on ergodic measure space for geometric Lorenz attractors between $C^r (r\geq 2)$-dense and $C^r$-generic vector fields in $\mathscr{X}^r(M^3)$.

\begin{theoremalph}\label{Thm:B}
	For every $r\in\mathbb{N}_{\geq2}\cup\{\infty\}$, there exist a dense subset $\mathscr{D}^r\subset\mathscr{X}^r(M^3)$ and a residual subset $\mathscr{R}^r\subset\mathscr{X}^r(M^3)$, such that
	\begin{itemize}
		
		\item If $\Lambda$ is  a geometric Lorenz attractor of $X\in\mathscr{D}^r$ , then 
		$\overline{\mathcal{M}_{per}(\Lambda)}\subsetneqq \overline{\mathcal{M}_{erg}(\Lambda)}\subsetneqq \mathcal{M}_{inv}(\Lambda)$. 
		Moreover, the space $\mathcal{M}_{erg}(\Lambda)$ is not connected.
		
		\item If $\Lambda$ is  a geometric Lorenz attractor of $X\in\mathscr{R}^r$, then  
		$\overline{\mathcal{M}_{per}(\Lambda)}=
		\overline{\mathcal{M}_{erg}(\Lambda)}=		\mathcal{M}_{inv}(\Lambda)$. 
		Moreover, the space $\mathcal{M}_{erg}(\Lambda)$ is path connected.
	\end{itemize}
\end{theoremalph}

\begin{remark}
	Recall that the geometric Lorenz attractor is a homoclinic class, see Proposition~\ref{prop:Lorenz}. To the authors' knowledge, the dense part of Theorem~\ref{Thm:B} (together with Theorem~\ref{Thm:SH-attractor} in Appendix~\ref{Section:C1}) is the first known example of locally dense systems (in any $C^r$-topology where $r\geq 1$) exhibiting an isolated ergodic measure supported on a homoclinic class.
\end{remark}

\begin{remark}
	    In our construction for the dense part of Theorem~\ref{Thm:B}, the atomic measure $\delta_{\sigma}$ of the singularity $\sigma$ is isolated in the ergodic measure space. We kick the atomic measure $\delta_{\sigma}$ out in the same spirit of \cite{bcgp}, where the authors kick out a single orbit of a chain recurrence class from a homoclinic class.
	    Actually, for every geometric Lorenz attractor $\Lambda$,  let $\mathcal{M}_0=\mathcal{M}_{erg}(\Lambda)\setminus \{\delta_{\sigma}\}$ and $\mathcal{M}_1=\{\mu\in \mathcal{M}_{inv}(\Lambda): \mu(\sigma)=0 \}$, then $\mathcal{M}_0$ is path connected and moreover, $\overline{\mathcal{M}_{per}(\Lambda)}=\overline{\emph{Convex}(\mathcal{M}_0)}=\overline{\mathcal{M}_1}$. Here $\emph{Convex}(\mathcal{M}_0)$ consists all elements that are convex combinations of measures in $\mathcal{M}_0$. 
\end{remark}

The following corollary of Theorem~\ref{Thm:B} concerns the support and entropy of invariant measures for $C^r(r\geq 2)$-generic geometric Lorenz attractors.

\begin{corollary}\label{Cor:Cr-entropy-support}
	For every $r\in\mathbb{N}_{\geq2}\cup\{\infty\}$, there exists a residual subset $\mathscr{R}^r\subset\mathscr{X}^r(M^3)$, such that for every $X\in\mathscr{R}^r$, if $\Lambda$ is a geometric Lorenz attractor of $X$, then 
	\begin{itemize}
		\item there exists a residual subset $\mathcal{M}_{res}$ in $\mathcal{M}_{inv}(\Lambda)$ such that every $\mu\in\mathcal{M}_{res}$ is ergodic with $\supp(\mu)=\Lambda$ and $h_{\mu}=0$.
		\item there exists a dense subset $\mathcal{M}_{den}$ in $\mathcal{M}_{inv}(\Lambda)$ such that every $\mu\in\mathcal{M}_{den}$ is ergodic with $h_{\mu}>0$.
	\end{itemize}
\end{corollary}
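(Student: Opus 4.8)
For the final statement (Corollary~\ref{Cor:Cr-entropy-support}), the plan is to deduce both items from Theorem~\ref{Thm:A} by standard Baire-category arguments inside the simplex $\mathcal{M}_{inv}(\Lambda)$. Fix $X\in\mathscr{R}^r$ (the residual set given by Theorem~\ref{Thm:A}) and a geometric Lorenz attractor $\Lambda$ of $X$. We use that $\mathcal{M}_{inv}(\Lambda)$ is a nonempty compact metrizable convex set, hence a Baire space with no isolated points for the weak$^*$-topology, that its extreme points are precisely the ergodic measures, and that $\Lambda$ is transitive with a dense set of periodic orbits; and we use the conclusion $\overline{\mathcal{M}_{per}(\Lambda)}=\overline{\mathcal{M}_{erg}(\Lambda)}=\mathcal{M}_{inv}(\Lambda)$ of Theorem~\ref{Thm:A}.

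For the first item I would take $\mathcal{M}_{res}$ to be the intersection of three residual subsets of $\mathcal{M}_{inv}(\Lambda)$. \emph{(i) Ergodicity.} In any metrizable Choquet simplex the set of extreme points is a $G_\delta$; since $\mathcal{M}_{erg}(\Lambda)$ is dense in $\mathcal{M}_{inv}(\Lambda)$ by Theorem~\ref{Thm:A}, it is residual. \emph{(ii) Full support.} Fix a countable basis $\{U_n\}_{n\geq1}$ of the nonempty relatively open subsets of $\Lambda$. Each $\{\mu:\mu(U_n)>0\}$ is open, by lower semicontinuity of $\mu\mapsto\mu(U_n)$, and it is dense: given $\nu\in\mathcal{M}_{inv}(\Lambda)$, transitivity and density of periodic orbits provide a periodic orbit meeting $U_n$ whose measure $\mu_n$ has $\mu_n(U_n)>0$, and then $(1-t)\nu+t\mu_n\in\mathcal{M}_{inv}(\Lambda)$ charges $U_n$ and tends to $\nu$ as $t\to0$; hence $\{\mu:\supp(\mu)=\Lambda\}=\bigcap_n\{\mu:\mu(U_n)>0\}$ is residual. \emph{(iii) Zero entropy.} The metric-entropy map $\mu\mapsto h_\mu$ is upper semicontinuous on $\mathcal{M}_{inv}(\Lambda)$ --- a property of geometric Lorenz attractors, coming from their expansiveness (or directly from the structure of the Lorenz template) --- so $\{\mu:h_\mu<1/n\}$ is open, and it is dense because periodic measures are dense by Theorem~\ref{Thm:A} and carry zero entropy; hence $\{\mu:h_\mu=0\}=\bigcap_n\{\mu:h_\mu<1/n\}$ is residual. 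A countable intersection of residual subsets of the Baire space $\mathcal{M}_{inv}(\Lambda)$ is residual, so $\mathcal{M}_{res}$ is residual and every $\mu\in\mathcal{M}_{res}$ is ergodic, fully supported, and of zero entropy.

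For the second item I would show that $\mathcal{M}_{den}:=\{\mu\in\mathcal{M}_{erg}(\Lambda):h_\mu>0\}$ is dense in $\mathcal{M}_{inv}(\Lambda)$. Since $\mathcal{M}_{per}(\Lambda)$ is dense by Theorem~\ref{Thm:A}, it suffices to approximate each periodic measure $\mu_\gamma$ by elements of $\mathcal{M}_{den}$. Given a periodic orbit $\gamma\subset\Lambda$ and $\varepsilon>0$, I would produce, by a standard construction of horseshoes that shadow a prescribed periodic orbit (available because the one-dimensional expanding Lorenz map associated to $\Lambda$ is topologically exact away from the singular point, equivalently because the Poincaré return map is transitive and uniformly hyperbolic), a basic set $H\subset\Lambda$ of the flow with $h_{top}(H)>0$ such that every $\mu\in\mathcal{M}_{inv}(H)$ satisfies $d(\mu,\mu_\gamma)<\varepsilon$ in a fixed metric inducing the weak$^*$-topology; by the variational principle $H$ carries an ergodic measure $\mu$ with $h_\mu>0$, and this $\mu\in\mathcal{M}_{den}$ lies within $\varepsilon$ of $\mu_\gamma$. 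Hence $\overline{\mathcal{M}_{den}}\supseteq\overline{\mathcal{M}_{per}(\Lambda)}=\mathcal{M}_{inv}(\Lambda)$, as required.

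The bookkeeping in (i)--(iii) is routine; the two substantive inputs are the upper semicontinuity of the metric entropy on $\Lambda$ and --- the main obstacle --- the construction in the second item of a positive-entropy horseshoe squeezed arbitrarily close (in the weak$^*$ sense, i.e.\ with controlled empirical time averages) to a prescribed periodic orbit. This is exactly the kind of statement that the analysis of the Lorenz template behind Theorem~\ref{Thm:A} provides (the $C^r$ connecting lemma and the horseshoes used to establish path connectedness), so in the final write-up this step should amount to quoting the relevant lemma from the body of the paper rather than being reproved.
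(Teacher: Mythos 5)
Your proposal is correct and, in substance, follows the same route as the paper: fix $X$ in the residual set of Theorem~\ref{Thm:A}, use $\overline{\mathcal{M}_{per}(\Lambda)}=\overline{\mathcal{M}_{erg}(\Lambda)}=\mathcal{M}_{inv}(\Lambda)$, then intersect (i) residuality of $\mathcal{M}_{erg}(\Lambda)$ (a $G_\delta$ in a metrizable Choquet simplex, dense by Theorem~\ref{Thm:A}; the paper cites \cite[Prop.~5.1]{abc}), (ii) residuality of full support from density of periodic orbits (the paper cites \cite[Prop.~5.3]{abc}, your direct argument via a countable basis and convex perturbation is the same content), and (iii) residuality of zero entropy from upper semicontinuity of $\mu\mapsto h_\mu$ plus density of periodic measures. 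On (iii) you argue directly that each $\{h_\mu<1/n\}$ is open and dense, while the paper passes through the continuity points of the entropy map; both are fine. For the dense part you and the paper both approximate a periodic measure by an ergodic measure carried on a nearby positive-entropy horseshoe; the paper justifies the existence of such a horseshoe via the homoclinic class property $\Lambda=H(p)$ and a transverse homoclinic orbit of $\orb(p)$ (following \cite[Thm.~3.5(c)]{abc}), while you reach it via the eventually-onto one-dimensional Lorenz map. These are two faces of the same fact: the paper derives $\Lambda=H(p)$ precisely from the eventually-onto property (Lemma~\ref{lem:evenually-onto}, Proposition~\ref{prop:Lorenz}), so your route is legitimate and morally identical.

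One small point to tighten: you justify upper semicontinuity of the entropy map by the attractor's ``expansiveness.'' Geometric Lorenz attractors contain a hyperbolic singularity, so the flow is not expansive in the classical Bowen--Walters sense; what holds is a weaker (robustly/$k^*$-)expansiveness, and the upper semicontinuity of $h_{(\cdot)}$ on singular hyperbolic attractors is established in \cite{PYY}, which is the reference the paper cites. Your parenthetical alternative (``from the structure of the Lorenz template'') is the safer formulation; in a final write-up, cite \cite{PYY} rather than invoking expansiveness.
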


\begin{remark}
	For every geometric Lorenz attractor $\Lambda$ of every $X\in \mathscr{X}^r(M^3)$, the measures with zero entropy and full support  form a residual subset in $\mathcal{M}_{inv}(\Lambda)$. See  Remark~\ref{Rem:entropy and support}.
\end{remark}

\begin{remark}
For vector fields of any dimension in $C^1$-topology, we obtain similar conclusions as in Theorem~\ref{Thm:B} and Corollary~\ref{Cor:Cr-entropy-support} for a class of  singular hyperbolic attractors, see Appendix~\ref{Section:C1}. 

\end{remark}

\subsection*{Organization of the paper} In Section~\ref{Section:Pre}, we state some preliminaries on singular hyperbolicity and geometric Lorenz attractors.
The proof of Theorem~\ref{Thm:A-connecting} together with a detailed characterization of geometric Lorenz attractors is given in Section~\ref{Section:Cr-Lorenz}.
Then we study the ergodic measure space of geometric Lorenz attractors and prove Theorem~\ref{Thm:B} together with Corollary~\ref{Cor:Cr-entropy-support} in Section~\ref{Section:measure}.
In Appendix~\ref{Section:robust}, we give a proof of Proposition~\ref{prop:Lorenz} which states that the $C^r(r\geq 2)$-vector fields admitting a geometric Lorenz attractor forms an open subset of $\mathscr{X}^r(M)$. In Appendix~\ref{Section:C1}, we sketch some conclusions concerning ergodic measure spaces of singular hyperbolic attractors for $C^1$ vector fields of any dimension.

% Seciton~\ref{Section:measure} is devoted to give some properties of the space of invariant measures supported on certain invariant compact set.  detailed characterization of geometric Lorenz attractors and prove Theorem~\ref{Thm:A} together with Corollary~\ref{Cor:Cr-entropy-support} in Section~\ref{Section:Cr-Lorenz}. In Appendix~\ref{Section:robust}, we give a proof of Proposition~\ref{prop:Lorenz} which states that the $C^r(r\geq 2)$-vector fields admitting a geometric Lorenz attractor forms an open subset of $\mathscr{X}^r(M)$. In Appendix~\ref{Section:C1}, we prove   Theorem~\ref{Thm:B} together with Corollary~\ref{Cor:entropy-support} which studies ergodic measure spaces of singular hyperbolic attractors.

\subsection*{Acknowledgment} We would like to thank S. Crovisier, S. Gan, G. Liao, C. A. Morales, P. Varandas, L. Wen, D. Yang, J. Yang and J. Zhang for useful suggestions and comments. We are very grateful to the anonymous referee of our paper for her/his valuable comments, which help us a lot to improve the paper. This work was prepared  during Y. Shi and X. Wang visiting School of Mathematical Sciences, Fudan University. The two authors would like to thank their hospitality.

\section{Preliminary}\label{Section:Pre}
   Recall that for a $C^r(r\geq 1)$-vector field $X\in\mathscr{X}^r(M)$, we denote by $\phi_t^X\colon M\rightarrow M$ the $C^r$-flow generated by $X$ and by $\Phi_t^X=D\phi_t^X$ the tangent map of $\phi_t^X$. We also use $\phi_t$ and $\Phi_t$ for simplicity if there is no confusion.
   
\subsection{Singular hyperbolicity}

We give the definition of hyperbolic and singular hyperbolic set.

\begin{definition}\label{Def:hyp}   
    Given  a vector field $X\in\mathscr{X}^1(M)$. An invariant compact set $\Lambda$ is {\it hyperbolic} if $\Lambda$ admits a continuous $\Phi_t$-invariant splitting $T_{\Lambda}M=E^s\oplus \langle X\rangle\oplus E^u$, where $\langle X\rangle$ denotes the one-dimensional linear space generated by the flow direction, such that $E^s$ is contracted by $\Phi_t$ and $E^u$ is expanded by $\Phi_t$. To be precise, there exist two constants $C>0$ and $\eta>0$, such that for any $x\in\Lambda$ and any $t\geq 0$, it satisfies:
    \begin{itemize}
    	\item for any $v\in E^s(x)$, $\|\Phi_t(v)\|\leq Ce^{-\eta t}\|v\|$;
    	\item for any $v\in E^u(x)$, $\|\Phi_{-t}(v)\|\leq Ce^{-\eta t}\|v\|$.
    \end{itemize}
The stable dimension $\dim(E^s)$ is called the {\it index} of the hyperbolic splitting.
\end{definition}

The notion of singular hyperbolicity was  introduced by Morales-Pacifico-Pujals~\cite{mpp} to describe the geometric structure of Lorenz attractors and was then developed in higher dimensions by~\cite{lgw,Me-Mo}.
%An invariant compact set $\Lambda$ of $\phi_t$ is {\it singular hyperbolic} if its tangent bundle admits a dominated splitting $T_{\Lambda}M=E^{ss}\oplus E^{cu}$ with $E^{ss}$ being $\Phi_t$-contracting and $E^{cu}$ being $\Phi_t$-sectional expanding (see details in Definition~\ref{Def:singular-hyp}).
It is proved in~\cite{mpp}  that any robustly transitive set of a three dimensional vector field is singular hyperbolic and similar conclusion holds for higher dimensions~\cite{lgw,zgw}.  We refer to \cite{3-dim flow} for a comprehensive introduction of singular hyperbolic vector fields. 

\begin{definition}\label{Def:singular-hyp}
	Given  a vector field $X\in\mathscr{X}^1(M)$. An invariant compact set $\Lambda$ is {\it singular hyperbolic} if $\Lambda$ admits a continuous $\Phi_t$-invariant splitting $T_{\Lambda}M=E^{ss}\oplus E^{cu}$ and there exist two constants $C,\eta>0$, such that for any $x\in\Lambda$ and any $t\geq 0$, it satisfies:
	\begin{itemize}
		\item $E^{ss}\oplus E^{cu}$ is a {\it dominated splitting}: $\|\Phi_t|_{E^{ss}(x)}\|\cdot \|\Phi_{-t}|_{E^{cu}(\phi_t(x))}\|< Ce^{-\eta t}$;
		\item $E^{ss}$ is contracted by $\Phi_t$: $\|\Phi_t(v)\|< Ce^{-\eta t}$ for any $v\in E^{ss}(x)$;
		\item $E^{cu}$ is sectionally expanded by $\Phi_t$:  $\|\det\Phi_t(x)|_{V_x}\|> Ce^{\eta t}$ for any 2-dimensional  subspace $V_x\subset E^{cu}_x$.
	\end{itemize}

\end{definition}

\begin{remark}\label{Rem:singular hyperbolicity}
	(1) Given $X\in\mathscr{X}^1(M)$. If an invariant compact set $\Lambda$ is hyperbolic, then $\Lambda$ must contain no singularity. On the other hand, if $\sing(\Lambda)=\emptyset$, then $\Lambda$ is hyperbolic if and only if $\Lambda$ is singular hyperbolic for $X$ or for $-X$.
	
	(2) By definition, singular hyperbolicity is a robust property. To be precise, if $\Lambda$ is a singular hyperbolic invariant compact set of $X\in\mathscr{X}^1(M)$ associated with splitting $T_{\Lambda}M=E^{ss}\oplus E^{cu}$ and constants $(C,\eta)$, then there exist a neighborhood $U$ of $\Lambda$ and a neighborhood $\mathcal{U}\subset \mathscr{X}^1(M)$ of $X$ such that for any $Y\in\mathcal{U}$, the maximal invariant set of $\phi_t^Y$ in $U$ is singular hyperbolic associated with the same stable dimension and constants $(C,\eta)$.
	
%	(3) It is proved in~\cite{mpp}  that any robustly transitive set of a three dimensional vector field is singular hyperbolic and similar conclusion holds for higher dimensions~\cite{lgw,zgw}.  We refer to \cite{3-dim flow} for a comprehensive introduction of singular hyperbolic vector fields. 
\end{remark}

\subsection{The geometric Lorenz attractor}

We follow the constructions of Guckenheimer and Williams~\cite{guck,gw,williams} to give the definition of {\it geometric Lorenz attractor} for any $C^r (r\geq 1)$ vector field. 
Let $X\in\mathscr{X}^r(M)$ where $r\in\mathbb{N}\cup\{\infty\}$,  an open subset $U\subseteq M$ is called an \emph{attracting region} of $X$, if for every $x\in\partial U$, the vector field $X(x)$ is transverse to $\partial U$ and points to the interior of $U$, and $\phi_t^X(x)\in U$ for every $t>0$. 
This implies $\overline{\phi_t^X(U)}=\phi_t^X(\overline{U})\subsetneqq U$ for every $t>0$. 
An invariant compact set $\Lambda$ is an {\it attractor} of $X$ if $\Lambda$ is transitive and $\Lambda$ is the maximal invariant set of an attracting region.
Recall that $M^3$ denotes a three dimensional closed smooth Riemannian manifold.

\begin{definition}\label{def:Lorenz}
	Let $r\geq 2$ and $X\in\mathscr{X}^r(M^3)$. We say $X$ admits a \emph{geometric Lorenz attractor} $\Lambda$, if $X$ has an attracting region $U\subset M^3$ such that  $\Lambda=\bigcap_{t>0}\phi^X_t(U)$ is a singular hyperbolic attractor and satisfies the following properties:
	\begin{itemize}
		\item $\Lambda$ contains a unique singularity $\sigma$ with three eigenvalues $\lambda_1<\lambda_2<0<\lambda_3$ with respect to $\Phi_1(\sigma)$ satisfying $\lambda_1+\lambda_3<0$ and $\lambda_2+\lambda_3>0$.  
		\item $\Lambda$ admits a $C^r$-smooth cross section $\Sigma$ which is $C^1$-diffeomorphic to $[-1,1]\times[-1,1]$, such that for every $z\in U\setminus W^s_{\it loc}(\sigma)$, there exists $t>0$ such that $\phi_t^X(z)\in\Sigma$, and $l=\{0\}\times[-1,1]=W^s_{\it loc}(\sigma)\cap\Sigma$.
		\item The Poincar\'e map $P:\Sigma\setminus l\rightarrow\Sigma$ is $C^1$-smooth in the coordinate $\Sigma=[-1,1]^2$ and has the form
		$P(x,y)=\big( f(x)~,~H(x,y) \big)$ for every 
		$(x,y)\in[-1,1]^2\setminus l$.  
		Moreover, it satisfies
		\begin{itemize}
			\item $H(x,y)<0$ for $x>0$, $H(x,y)>0$ for $x<0$, and $\sup_{(x,y)\in\Sigma\setminus l}\big|\partial H(x,y)/\partial y\big|<1$, 
			$\sup_{(x,y)\in\Sigma\setminus l}\big|\partial H(x,y)/\partial x\big|<1$;
			\item the one-dimensional quotient map $f:[-1,1]\setminus\{0\}\rightarrow[-1,1]$ is $C^1$-smooth and satisfies
			$\lim_{x\rightarrow0^-}f(x)=1$,
			$\lim_{x\rightarrow0^+}f(x)=-1$, $-1<f(x)<1$ and
			$f'(x)>\sqrt{2}$ for every $x\in[-1,1]\setminus\{0\}$.
		\end{itemize}
	\end{itemize}
\end{definition}

\begin{figure}[htbp]
	\centering
	\includegraphics[width=14cm]{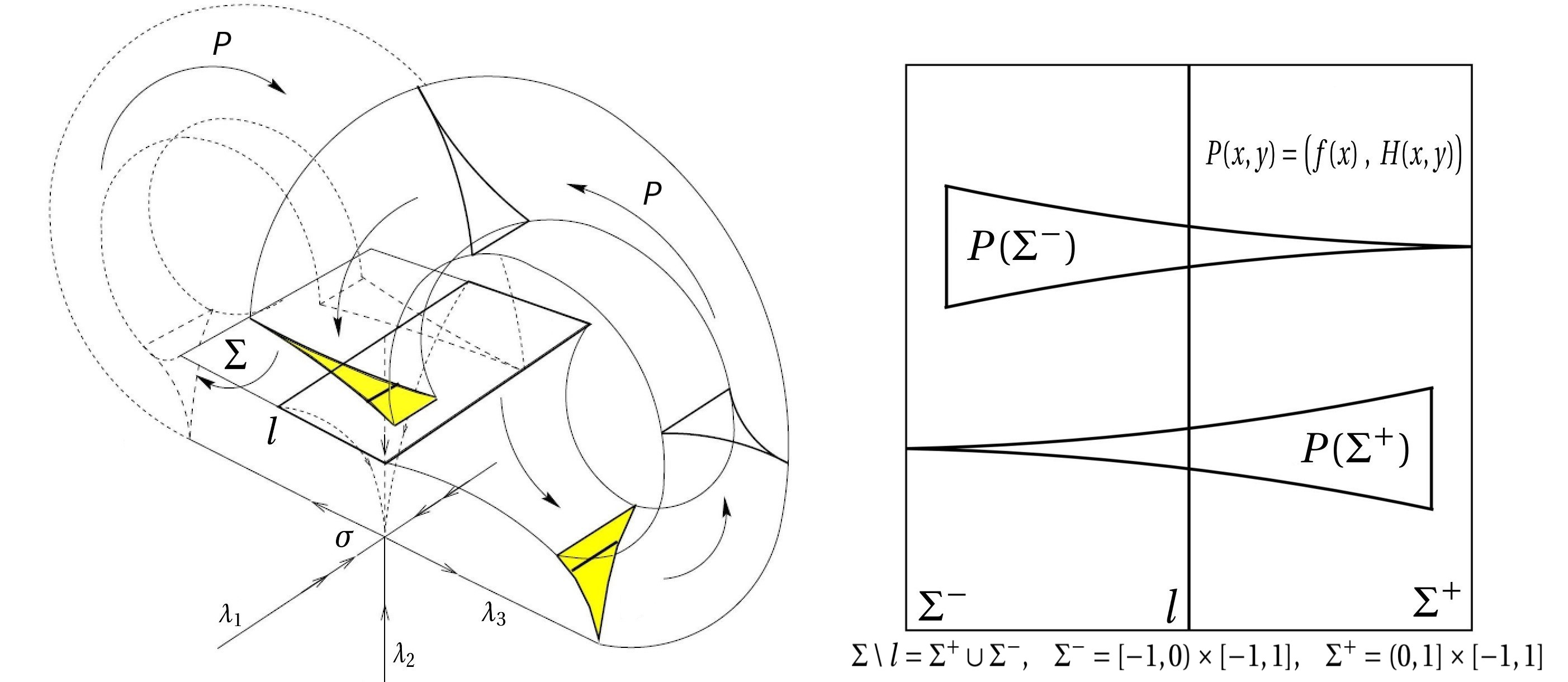}
	\caption{Geometric Lorenz attractor and return map}
\end{figure}

\begin{remark}\label{rk:unstable}
	The unstable manifold of $\sigma$ contains two orbits. We denote $z^+,z^-\in W^u(\sigma)\cap\Sigma$ which satisfy $\phi_t^X(z^{\pm})\notin\Sigma$ for every $t<0$, and
	$$
	z^+=(-1,y^+)\in\Sigma, \qquad {\rm and} \qquad
	z^-=(1,y^-)\in\Sigma.
	$$
	Here we can assume $-1<y^+<0$ and $0<y^-<1$ by changing the $y$-coordinate.
	For every $(x,y)\in\Sigma\setminus l$, if $x\rightarrow0^+$, then $P(x,y)$ approaches $z^+$;  if $x\rightarrow0^-$, then $P(x,y)$ approaches $z^-$. 
\end{remark}

\begin{lemma}\label{lem:cone}
	The Poincar\'e map $P(x,y)=\big( f(x),H(x,y) \big):\Sigma\setminus l\rightarrow\Sigma$ satisfies
	$$
	\lim_{x\rightarrow0}f'(x)=+\infty,
	\qquad {\rm and} \qquad
	\lim_{x\rightarrow0}\big|\partial H(x,y)/\partial y\big|=0.
	$$
	
	Moreover, for every $\alpha>0$, let the cone field $\cC_{\alpha}=\bigcup_{(x,y)\in\Sigma}\cC_{\alpha}(x,y)$ be defined on $\Sigma$ as
	$$
	\cC_{\alpha}(x,y)~=~\big\{~v=a\cdot\partial/\partial x 
	+b\cdot\partial/\partial y:~
	|b|\leq\alpha\cdot |a|~\big\}.
	$$
	Then for every $\alpha\geq1/(\sqrt{2}-1)$, the Poincar\'e map $P$ satisfies 
	$$
	DP\big(\cC_{\alpha}(x,y)\big)\subseteq\cC_{\alpha}(P(x,y)),
	\qquad \forall (x,y)\in\Sigma\setminus l.
	$$
\end{lemma}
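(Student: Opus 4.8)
The plan is to verify the two displayed limits directly from Definition~\ref{def:Lorenz} and then use them to produce the cone invariance. First I would establish the limit $\lim_{x\to 0} f'(x) = +\infty$. The hypotheses give $\lim_{x\to 0^-} f(x) = 1$ and $\lim_{x\to 0^+} f(x) = -1$ while $-1 < f(x) < 1$ on $[-1,1]\setminus\{0\}$, so $f$ has a jump discontinuity of total height $2$ at the origin. Combined with $f' > \sqrt{2}$ everywhere, a short argument (e.g. if $f'$ stayed bounded on a one-sided neighborhood of $0$ then $f$ would extend continuously there, contradicting the boundary values together with $-1<f<1$; more carefully, the one-sided limit of $f$ at $0$ equals $\pm 1$, which is the extreme value, forcing the slope to blow up) gives $f'(x)\to+\infty$ as $x\to 0$. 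For the second limit, $\lim_{x\to 0}|\partial H/\partial y| = 0$, I would invoke singular hyperbolicity of $\Lambda$: the cross-section $\Sigma$ carries the $E^{ss}$-direction essentially along $\partial/\partial y$, and the strong-stable contraction forces the $y$-derivative of the return map to be dominated by the $x$-expansion $f'(x)$; since $f'(x)\to\infty$ near $l$ and the splitting is dominated, $|\partial H/\partial y|$ must tend to $0$. Alternatively, this is part of the standard description of the geometric model in~\cite{gw,williams}, so it may simply be quoted.

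Next I would prove the cone invariance. Fix $\alpha \geq 1/(\sqrt 2 - 1)$ and take $v = a\,\partial/\partial x + b\,\partial/\partial y \in \cC_\alpha(x,y)$, so $|b|\le \alpha|a|$. Writing $DP$ in the coordinates $(x,y)$, one has
$$
DP(v) = \Bigl( f'(x)\,a \Bigr)\frac{\partial}{\partial x} + \Bigl( \tfrac{\partial H}{\partial x}\,a + \tfrac{\partial H}{\partial y}\,b \Bigr)\frac{\partial}{\partial y}.
$$
The new $x$-component has absolute value $f'(x)|a| > \sqrt 2\,|a|$, and the new $y$-component is bounded in absolute value by $|\partial H/\partial x|\,|a| + |\partial H/\partial y|\,|b| < |a| + |b| \le (1+\alpha)|a|$, using the two sup-bounds $|\partial H/\partial x| < 1$, $|\partial H/\partial y| < 1$ from Definition~\ref{def:Lorenz}. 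Hence the ratio of the $y$-component to the $x$-component of $DP(v)$ is at most $(1+\alpha)/\sqrt 2$, and the choice $\alpha \ge 1/(\sqrt 2 - 1)$ is exactly what makes $(1+\alpha)/\sqrt 2 \le \alpha$ (indeed $(1+\alpha)/\sqrt 2 \le \alpha \iff 1+\alpha \le \sqrt 2\,\alpha \iff 1 \le (\sqrt 2 - 1)\alpha$). Therefore $DP(v) \in \cC_\alpha(P(x,y))$, which is the claim. Note this last part does not even use the two limit statements — the uniform bounds suffice — so the limits are really there to record the asymptotic behavior near $l$ for later use.

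I expect the only genuine subtlety to be the second limit $\lim_{x\to 0}|\partial H/\partial y| = 0$: unlike the first limit and the cone computation, it is not forced by the explicitly listed inequalities in Definition~\ref{def:Lorenz} alone but rather by the standing assumption that $\Lambda$ is singular hyperbolic (the strong-stable bundle must be uniformly contracted, and domination pits the $y$-contraction against the unbounded $x$-expansion). The cleanest route is to translate "$E^{ss}$ contracted, splitting dominated" into a bound of the form $|\partial H/\partial y(x,y)| \le C\,f'(x)^{-\rho}$ for some $\rho > 0$ on a neighborhood of $l$, perhaps by comparing with the linearized flow near $\sigma$ where the eigenvalue inequalities $\lambda_1 < \lambda_2 < 0 < \lambda_3$, $\lambda_1 + \lambda_3 < 0$ control the return map's derivatives; then $f'(x)\to\infty$ finishes it. If one prefers to stay purely combinatorial, one can alternatively take these two limits as additional normalizing hypotheses implicit in Guckenheimer--Williams' construction and cite~\cite{gw,williams}; the proof of the cone-field statement, which is what is actually used downstream, is unaffected either way.
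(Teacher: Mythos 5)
Your cone--invariance computation is correct and coincides with the paper's (the paper checks only the boundary vector $\partial/\partial x+\alpha\,\partial/\partial y$, you check all of $\cC_\alpha$, but these are equivalent and the inequality $(1+\alpha)/\sqrt2\le\alpha\iff\alpha\ge 1/(\sqrt2-1)$ is exactly what the authors use). However, your justification of the first limit $\lim_{x\to 0}f'(x)=+\infty$ contains a genuine gap, and in fact the limit does \emph{not} follow from the explicitly listed properties of $f$ in Definition~\ref{def:Lorenz}. Boundedness of $f'$ on a one-sided neighborhood of $0$ only gives a continuous extension of $f$ to $x=0$ with value $\pm1$; since $0$ is not in the domain of $f$, there is no conflict with $-1<f(x)<1$. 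Concretely, $f(x)=1.9\,x-\mathrm{sign}(x)$ satisfies every condition on $f$ in Definition~\ref{def:Lorenz} ($f'\equiv 1.9>\sqrt2$, $f(0^{\mp})=\pm1$, $-0.9\le f(x)\le 0.9$ on $\{\pm1\}$) yet has constant derivative, so no ``combinatorial'' argument from those inequalities alone can yield $f'(x)\to+\infty$. Your closing remark that the \emph{second} limit is the only genuine subtlety and the first is forced by the listed inequalities thus has the situation backwards.

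The paper's proof treats both limits on the same footing and is much shorter than you anticipate: as $x\to0$ the point $(x,y)$ tends to $W^s_{\it loc}(\sigma)$, so the return orbit spends an unbounded time in a linearizing neighborhood of $\sigma$; there the eigenvalue inequalities $\lambda_1<\lambda_2<0$ and $\lambda_2+\lambda_3>0$ translate, via the transit time $T\sim-\tfrac1{\lambda_3}\log|x|$, into $f'(x)\asymp|x|^{-\lambda_2/\lambda_3-1}\to+\infty$ (using $\lambda_2+\lambda_3>0$) and $|\partial H/\partial y|\asymp|x|^{-\lambda_1/\lambda_3}\to 0$ (using $\lambda_1<0$). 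This is the content you correctly intuited for the second limit (singular hyperbolicity near $\sigma$), but it is needed for the first limit too; the passage-map estimate is the single ingredient you were missing, and your appeal to citing Guckenheimer--Williams would of course also be an acceptable way to fill it.
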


\begin{proof}
	Since $(x,y)\rightarrow W^s_{\it loc}(\sigma)$ as $x\rightarrow0$, the orbit of $(x,y)$ approaches the singularity $\sigma$. As we assumed $\lambda_1<\lambda_2<0$ and $\lambda_2+\lambda_3>0$, thus $\lim\limits_{x\rightarrow0}f'(x)=+\infty$ and $\lim\limits_{x\rightarrow0}\big|\partial H(x,y)/\partial y\big|=0$. Moreover, for every $\alpha\geq1/(\sqrt{2}-1)$ and $v=\partial/\partial x+\alpha\cdot\partial/\partial y\in\cC_{\alpha}$, if we denote  $DP(v)=a\cdot\partial/\partial x+b\cdot\partial/\partial y$, then we have $|a|>\sqrt{2}$ and $|b|\leq 1+\alpha\leq \alpha\cdot\sqrt{2}$. This implies $DP(v)\in\cC_{\alpha}$.
\end{proof}

\begin{remark}\label{rk:H}
	For every point $z\in\Sigma\cap\Lambda$, the cone field $\cC_{\alpha}(z)\oplus\left\langle X(z)\right\rangle$ is a neighborhood of center-unstable bundle $E^{cu}(z)$ of $z$.
	
	In Definition \ref{def:Lorenz}, the property $\sup_{(x,y)\in\Sigma\setminus l}\big|\partial H(x,y)/\partial x\big|<1$ is not necessary. We only need there exists some constant $K>0$, such that $\sup_{(x,y)\in\Sigma\setminus l}\big|\partial H(x,y)/\partial x\big|<K$, which is automatically holds because $\Lambda$ is singular hyperbolic and $H$ is uniformly $C^1$-smooth. Then we can take $\alpha\geq K/(\sqrt{2}-1)$ and the cone field $\cC_{\alpha}$ is $DP$-invariant.
\end{remark}

It has been showed in~\cite{c1lorenz} that, following Definition~\ref{def:Lorenz}, every geometric Lorenz attractor $\Lambda$ is robustly transitive.
In Section \ref{Section:Cr-Lorenz}, we will introduce a series of properties of geometric Lorenz attractors. In particular, the $C^r$-vector fields exhibiting a geometric Lorenz attractor consists an open subset in $\mathscr{X}^r(M^3)$ for every $r\geq2$ (Proposition \ref{prop:Lorenz}).

\section{A $C^r$-connecting lemma of the geometric Lorenz attractors}\label{Section:Cr-Lorenz}

In this section, we give the proof of Theorem~\ref{Thm:A-connecting}. The more detailed statement is Theorem~\ref{thm:Lorenz-connecting} below. Before the proof, we first state some basic properties of geometric Lorenz attractors in the following subsection.

\subsection{Properties of geometric Lorenz attractors}\label{Section:properties-Lorenz}

In this subsection, we investigate some properties for the geometric Lorenz attractor. We assume $r\in\mathbb{N}_{\geq2}\cup\{\infty\}$ and $X\in\cX^r(M^3)$ which exhibits a geometric Lorenz attractor $\Lambda$ with cross section $\Sigma$. Let $P(x,y)=\big(f(x),H(x,y)\big):\Sigma\setminus l\rightarrow\Sigma$ be the Poincar\'e map of $\phi_t^X$. 

Recall that $\Lambda$ is a singular hyperbolic attractor with splitting $T_{\Lambda}M^3=E^{ss}\oplus E^{cu}$ and there exists a H\"older continuous foliation $\cF^{ss}$ tangent to $E^{ss}$ everywhere~\cite{psw}.
Since the strong stable bundle $E^{ss}$ is determined by the positive orbit of a point and $\Lambda$ is an attractor, $E^{ss}$ and $\cF^{ss}$ is well defined in the whole attracting region $U$. Moreover, we have the following lemma.

\begin{lemma}\label{lem:stable}
	For every $z\in U\setminus\cF^{ss}(\sigma)$, if we define 
	$$
	\cF^s(z)=\big(\bigcup_{t\in\RR}\phi_t^X(\cF^{ss}(z))\big)\cap U,
	$$
	then $\cF^s$ is a 2-dimensional $\phi_t^X$-invariant foliation in $U\setminus\cF^{ss}(\sigma)$. Moreover, each leaf of $\cF^s$ is $C^r$-smooth and the holonomy maps of $\cF^s$ is $C^{1}$-smooth.
\end{lemma}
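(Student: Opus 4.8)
The plan is to build $\cF^s$ by flow-saturating the strong stable foliation $\cF^{ss}$, and to reduce every regularity question to the cross section $\Sigma$, where the relevant foliation is simply the vertical one of the $C^1$-chart $\Sigma\cong[-1,1]^2$. First I would collect the properties of $\cF^{ss}$ to be used: by \cite{psw} and \cite[Proposition 3.2]{am1}, $\cF^{ss}$ is a $\phi_t^X$-invariant continuous foliation of the whole attracting region $U$ whose leaves are one-dimensional $C^r$ curves tangent to $E^{ss}$ along $\Lambda$. The key geometric point is that $X$ is transverse to $\cF^{ss}$ at every point of $U\setminus\cF^{ss}(\sigma)$: the leaf $\cF^{ss}(\sigma)$ through the fixed point $\sigma$ is itself $\phi_t^X$-invariant, hence a union of orbits (it is $W^{ss}(\sigma)$ continued inside $U$) and carries the flow direction everywhere along itself; off this leaf, $X\in E^{cu}$ holds on $\Lambda$ by \cite[Lemma 3.4]{bgy}, which persists to $U$ by continuity of the splitting together with forward invariance, so $X\notin E^{ss}$ there. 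Consequently, if $z\notin\cF^{ss}(\sigma)$ then $\phi_t^X(z)\notin\cF^{ss}(\sigma)$ for all $t$ (otherwise $z=\phi_{-t}^X(\phi_t^X(z))\in\cF^{ss}(\sigma)$), so $\cF^s(z)\subset U\setminus\cF^{ss}(\sigma)$.

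Next I would check that $\cF^s$ partitions $U\setminus\cF^{ss}(\sigma)$ into $C^r$-immersed surfaces. Invariance of $\cF^{ss}$ gives $\cF^s(z)=\big(\bigcup_t\cF^{ss}(\phi_t^X(z))\big)\cap U$, and a one-line computation shows $w\in\cF^s(z)\Rightarrow\cF^s(w)=\cF^s(z)$, so the sets $\cF^s(z)$ do partition $U\setminus\cF^{ss}(\sigma)$. Each of them is the image, intersected with $U$, of the $C^r$ map $(w,t)\mapsto\phi_t^X(w)$ on $\cF^{ss}(z)\times\RR$; since $\cF^{ss}(z)$ is a $C^r$ curve and, by the transversality above, $X$ is nowhere tangent to $\cF^{ss}$ along $\cF^s(z)$, this map is an immersion, so every leaf is a $C^r$-immersed surface, tangent to $E^{ss}\oplus\langle X\rangle$ along $\Lambda$.

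It then remains to produce foliation charts and compute the holonomy. Granting that every orbit in $U\setminus\cF^{ss}(\sigma)$ crosses $\Sigma$ (see below), for $w\in U\setminus\cF^{ss}(\sigma)$ I would fix $t_0$ with $w':=\phi_{t_0}^X(w)$ in the interior of $\Sigma$. A small flow-box around $w$ is $C^r$-identified by the flow with $V'\times(-\varepsilon,\varepsilon)$ for a neighbourhood $V'$ of $w'$ in $\Sigma$, and under this identification the plaques of $\cF^s$ become the products (vertical segment of $V'$)$\times(-\varepsilon,\varepsilon)$, because locally $\cF^s(z')\cap V'$ equals the contracting leaf of the Poincar\'e map $P$ through $z'$, i.e. the vertical line $\{x=x(z')\}$. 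These charts form a foliation atlas: it is $\phi_t^X$-invariant, its plaques are $C^r$, and its overlap maps are realized by the $C^r$ flow maps together with $P$, which sends vertical leaves to vertical leaves. For the holonomy, conjugating by these $C^r$ flow maps and by $P$ reduces the question to the holonomy of the vertical foliation of $\Sigma$ between $C^r$ transversals; in the chart $\Sigma\cong[-1,1]^2$ this is the projection $(x,y)\mapsto x$, and the contribution of $P$ is the $C^1$ map $f$, so the holonomy is $C^1$.

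The one point needing genuine care, and which I expect to be the main obstacle, is the bookkeeping near $\sigma$: confirming that $\cF^{ss}(\sigma)$ is exactly the tangency locus of $X$ with $\cF^{ss}$, that deleting it is both necessary and sufficient for $\cF^s$ to be a well-defined two-dimensional foliation, and that every orbit in $U\setminus\cF^{ss}(\sigma)$ meets $\Sigma$. The last assertion splits into orbits outside $W^s_{\it loc}(\sigma)$, covered directly by Definition \ref{def:Lorenz}, and orbits in $W^s_{\it loc}(\sigma)\setminus\cF^{ss}(\sigma)$, which by the construction of $\Sigma$ all pass through $l=W^s_{\it loc}(\sigma)\cap\Sigma$ minus the single point of $\cF^{ss}(\sigma)\cap\Sigma$; note that these orbits still converge to $\sigma$ in forward time, so what matters is the position of the orbit relative to $\Sigma$, not its $\omega$-limit set. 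Everything else is a matter of transporting the essentially trivial regularity of the vertical foliation of $\Sigma$ through the $C^r$ flow maps and the $C^1$ return map $P$.
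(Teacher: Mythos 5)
Your proposal is correct in broad strokes, but it takes a genuinely different route from the paper for the only non-trivial part of the statement, namely the $C^1$ regularity of the holonomy, and this is worth flagging. The paper does not deduce the $C^1$ holonomy from the $C^1$ chart on $\Sigma$. It cites Ara\'ujo--Melbourne \cite[Lemma 7.1]{am2}, whose argument (spelled out later in the paper's Appendix, Claim~\ref{clm:holonomy}) is: the holonomy of the codimension-one stable foliation is absolutely continuous with a Jacobian given by an infinite product along strong stable leaves, and absolute continuity of a codimension-one foliation forces the holonomy to be $C^{1+}$. That is a robust, intrinsic argument; in particular it applies to nearby vector fields $Y$ whose $C^1$ chart is not yet available, which is exactly how the authors use it in the proof of Proposition~\ref{prop:Lorenz}. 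Your approach instead pushes everything back to $\Sigma$ and reads the $C^1$ regularity off the $C^1$ chart $\Sigma\cong[-1,1]^2$ together with the skew-product form of $P$; this is logically valid given Definition~\ref{def:Lorenz}, and it is shorter and more elementary, but it works only because the definition already packages the $C^1$ regularity of the stable foliation on $\Sigma$ into the existence of the chart. In other words, you are unpacking the definition rather than proving an independent regularity statement, so the argument would not transfer to the setting of the Appendix where the same conclusion is needed for perturbed vector fields.

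Two subsidiary points you should tighten. First, the identification of $\cF^s\cap\Sigma$ with the vertical foliation is a genuine claim, not a tautology: it requires the dynamical argument that two points of $\Sigma$ lie on the same $\cF^s$-leaf exactly when their forward $P$-orbits are asymptotic, and that the expansion $f'>\sqrt2$ together with the contraction $|\partial H/\partial y|<1$ forces asymptotic orbits to share the same $x$-coordinate; alternatively, one can argue that $(E^{ss}\oplus\langle X\rangle)\cap T\Sigma$ is the unique $DP$-invariant contracting line field and must coincide with $\partial/\partial y$. Second, your assertion that $X$ is transverse to $\cF^{ss}$ everywhere on $U\setminus\cF^{ss}(\sigma)$ needs an actual argument on $U$, not just on $\Lambda$: \cite[Lemma 3.4]{bgy} lives on $\Lambda$, and continuity alone does not rule out tangency at some point of $U\setminus\Lambda$ off $\cF^{ss}(\sigma)$; one has to use that $X(z)\in E^{ss}(z)$ would force the orbit of $z$ to lie in a single strong stable leaf, which by uniform contraction of $\cF^{ss}$ can only happen on $\cF^{ss}(\sigma)$. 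Both gaps are fillable, but they are the real content of the "foliation" and "reduction to $\Sigma$" steps that your write-up currently waves at.
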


\begin{proof}
	The definition of $\cF^s$ implies it is a 2-dimensional $\phi_t^X$-invariant foliation in $U\setminus\cF^{ss}(\sigma)$. The classical invariant manifold theorem \cite{hps} shows each leaf of $\cF^{ss}$ is $C^r$-smooth, so does each leaf of $\cF^s$. The holonomy maps of $\cF^s$ is $C^{1}$-smooth\footnote{In fact, \cite[Lemma 7.1]{am2} proves more that the holonomy maps of $\cF^s$ is $C^{1+}$-smooth.} has been proved in \cite[Lemma 7.1]{am2}.
\end{proof}

\begin{lemma}\label{lem:big-section}
	There exists a $C^r$-smooth cross section $\Sigma_1$ of $\phi_t^X$ which is an extension of $\Sigma$ and $C^1$-diffeomorphic to $[-1-\epsilon,1+\epsilon]^2$ for some $\epsilon>0$ such that
	$$
	l_1=\{0\}\times[-1-\epsilon,1+\epsilon]
	=\Sigma_1\cap W^s_{\it loc}(\sigma,\phi_t^X),
	\quad {\rm and} \quad
	\Sigma=[-1,1]^2\subset
	[-1-\epsilon,1+\epsilon]^2=\Sigma_1.
	$$
	The Poincar\'e map $P:\Sigma\setminus l\rightarrow\Sigma$ smoothly extends to a Poincar\'e map
	$P_1:\Sigma_1\setminus l_1\rightarrow\Sigma_1$, which has the form $P_1(x,y)=\big( f_1(x), H_1(x,y) \big)$ for every $(x,y)\in\Sigma\setminus l_1$ and satisfies 
	$P_1|_{\Sigma\setminus l}\equiv P$.
	Moreover, they satisfy:
	\begin{itemize}
		\item $H_1(x,y)>0$ for $x>0$, $H_1(x,y)<0$ for $x<0$, and $\sup_{(x,y)\in\Sigma_1\setminus l_1}\big|\partial H_1(x,y)/\partial y\big|<1$,
		$\sup_{(x,y)\in\Sigma\setminus l}\big|\partial H(x,y)/\partial x\big|<1$;
		\item the one-dimensional quotient map $f_1:[-1-\epsilon,1+\epsilon]\setminus\{0\}\rightarrow[-1,1]$ is $C^1$-smooth and satisfies $f_1|_{[-1,1]\setminus\{0\}}\equiv f$, $-1<f(x)<1$ and
		$f'(x)>\sqrt{2}$ for every $x\in[-1-\epsilon,1+\epsilon]\setminus\{0\}$.
	\end{itemize}
	In particular, the image of the Poincar\'e map $P_1$ satisfies $\overline{P_1(\Sigma_1\setminus l_1)} \subseteq [-1,1]\times(-1,1)$.%.\subseteq{\rm Int}(\Sigma_1)$.
\end{lemma}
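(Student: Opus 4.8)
The plan is to take $\Sigma_1$ to be a slightly enlarged piece of the $C^r$ surface carrying $\Sigma$, to let $P_1$ be the first‑return map of $\phi_t^X$ to $\Sigma_1$, and to deduce all the asserted properties from the corresponding properties of $\Sigma,P$ by uniform continuity and compactness once the enlargement is small. Concretely, since $\overline\Sigma$ is compact and transversality to $X$ is an open condition, $\Sigma$ extends inside its ambient $C^r$ surface by a thin collar around $\partial\Sigma$ — attached in directions tangent to $\Sigma$, not along $X$ — to a $C^r$ cross section $\Sigma_1\supseteq\Sigma$ still transverse to $X$ and still contained in the attracting region $U$. Shrinking the collar, the curve $l=W^s_{\mathrm{loc}}(\sigma)\cap\Sigma$ extends to a single $C^1$ curve $l_1=W^s_{\mathrm{loc}}(\sigma)\cap\Sigma_1$, using that $W^s_{\mathrm{loc}}(\sigma)$ is a closed $C^r$ surface transverse to $\Sigma_1$. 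By Lemma~\ref{lem:stable} the leaves of $\cF^{ss}$ cut $\Sigma_1$ in a $C^1$ foliation with $C^1$ holonomy; I would choose $C^1$ coordinates on $\Sigma_1$ in which these leaves become the vertical fibers $\{x\}\times[-1-\epsilon,1+\epsilon]$, with $l_1=\{0\}\times[-1-\epsilon,1+\epsilon]$, $\Sigma=[-1,1]^2$, and the coordinates restricting on $\Sigma$ to the given ones.

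Next I would set $P_1$ to be the first return of $\phi_t^X$ to $\Sigma_1$. By Definition~\ref{def:Lorenz} the forward orbit of every point of $U\setminus W^s_{\mathrm{loc}}(\sigma)$ meets $\Sigma$, hence $\Sigma_1$, so $P_1$ is defined on all of $\Sigma_1\setminus l_1$ and is $C^r$ there, the exceptional set $l_1$ being exactly the orbits converging to $\sigma$. Because the collar is attached transversally to $X$, for $z\in\Sigma\setminus l$ the forward orbit leaves the ambient surface and re‑meets it only after a uniformly positive time; granting the image bound below, this first return lies in $[-1,1]\times(-1,1)\subseteq\Sigma$, so no earlier crossing of $\Sigma_1\setminus\Sigma$ occurs and $P_1|_{\Sigma\setminus l}=P$. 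The $\phi_t^X$‑invariance of $\cF^{ss}$ forces $P_1$ to carry vertical fibers into vertical fibers, whence $P_1(x,y)=\big(f_1(x),H_1(x,y)\big)$ with $f_1$ the induced $C^1$ one‑dimensional quotient, $f_1|_{[-1,1]\setminus\{0\}}=f$, $H_1|_{\Sigma\setminus l}=H$. The strict inequalities $\sup|\partial H_1/\partial y|<1$, $\sup|\partial H_1/\partial x|<1$ and $f_1'>\sqrt2$ on $[-1-\epsilon,1+\epsilon]\setminus\{0\}$ hold because they hold with a uniform gap on $\overline\Sigma\setminus l$ — for $f'$ one uses $f'\to+\infty$ near $0$ together with compactness of $[-1,-\delta]\cup[\delta,1]$ to get $\inf(f'-\sqrt2)>0$ — and $P_1$ is a $C^1$ extension of $P$ whose derivatives are continuous up to the new endpoints, where they agree with those of $P$; hence these bounds persist on $\Sigma_1\setminus l_1$ for $\epsilon$ small, and the same smallness forces $f_1$ to map into $[-1,1]$ since $f(\pm1)\in(-1,1)$.

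The heart of the argument, and the step I expect to cause the most trouble, is the image bound $\overline{P_1(\Sigma_1\setminus l_1)}\subseteq[-1,1]\times(-1,1)$. It rests on the corresponding fact for the original return, $\overline{P(\Sigma\setminus l)}\subseteq[-1,1]\times(-1,1)$: in the $x$‑direction because $f$ maps into $[-1,1]$ with $f(0^-)=1$, $f(0^+)=-1$, and in the $y$‑direction because the geometric Lorenz flow re‑enters $\Sigma$ strictly inside $\partial\Sigma$ along the strong‑stable direction — the limits $y^\pm$ of Remark~\ref{rk:unstable} lie in $(-1,1)$, and away from $l$ the set $\overline{P(\Sigma\setminus l)}$ is compact with $y$‑coordinate bounded away from $\pm1$. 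Given this, I would propagate the bound to $\Sigma_1$: points with $x$ near $0$ have orbits passing close to $\sigma$ and exit near $z^\pm$, which are interior in $y$; away from $l_1$ the return is a uniformly continuous $C^r$ map, so its image over the slightly larger $\overline{\Sigma_1}$ stays inside $[-1,1]\times(-1,1)$ once $\epsilon$ is small, using in the $x$‑direction the $C^1$ estimate of the previous paragraph.

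A secondary point requiring care, closely tied to the same obstacle, is that the collar $\Sigma_1\setminus\Sigma$ must genuinely be missed by the orbit segments realizing the returns of points of $\Sigma$, so that indeed $P_1|_{\Sigma\setminus l}=P$ rather than some competing earlier crossing; I would obtain this from the transverse attachment of the collar together with a uniform positive lower bound on return times (returns near $l$ being long because the flow slows down near $\sigma$, returns elsewhere being bounded below by compactness). Once $\overline\Sigma$ is handled, the displayed final inclusion and the statement $P_1|_{\Sigma\setminus l}\equiv P$ are immediate, and all the remaining claims are routine bookkeeping with the $C^1$ coordinates chosen in the first paragraph.
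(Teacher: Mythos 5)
Your proposal follows essentially the same route as the paper: extend $\Sigma$ inside its ambient $C^r$ surface, use the $C^1$ stable foliation from Lemma~\ref{lem:stable} to choose coordinates making the stable leaves vertical on $\Sigma_1$, take $P_1$ to be the first-return map, and propagate the estimates from $P$ to $P_1$ by continuity and compactness, with the uniform gap $\inf(f'-\sqrt2)>0$ supplying the quantitative content behind the paper's terse ``shrinking $\epsilon$ if necessary.'' The one wrinkle is an ordering issue: you invoke the image bound for $P_1$ in order to conclude $P_1|_{\Sigma\setminus l}=P$, yet the bound as you state it concerns $P_1$ on all of $\Sigma_1$ including $\Sigma$, which is precisely what is to be identified with $P$; the cleaner order is to first bound the first return to $\Sigma$ from every point of $\Sigma_1\setminus l_1$ (a continuous extension of $P$, hence landing strictly inside $[-1,1]\times(-1,1)$ for small $\epsilon$) and then observe this forces it to also be the first return to $\Sigma_1$ --- but this is a matter of presentation, not a missing idea, and the paper's own proof is equally brief here.
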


\begin{remark}
	Here even the cross section $\Sigma_1$ is $C^r$-smooth, the stable foliation $\cF^s\cap\Sigma_1$ induced by the flow intersecting with $\Sigma_1$ can only be $C^1$ but not $C^r$-smooth. If we want to take a coordinate $[-1-\epsilon,1+\epsilon]^2$ where $x\times[-1-\epsilon,1+\epsilon]$ is the leaf of induced stable foliations $\cF^s\cap\Sigma_1$, this coordinate can only be $C^1$-smooth. This is why we say $\Sigma_1$ is $C^1$-diffeomorphic to $[-1-\epsilon,1+\epsilon]^2$ in the lemma.
\end{remark}

\begin{proof}
	Since $\Sigma$ is a $C^r$-smooth cross section of $\phi_t^X$, we can extend $\Sigma$ to a larger surface $\Sigma_1$ transverse to $\phi_t^X$. 
	Moreover, Lemma \ref{lem:stable} shows that the stable foliation $\cF^s$ intersects $\Sigma_1$ induced a $C^{1}$-smooth foliation in $\Sigma_1$, where we already have $\cF^s\cap\Sigma=\big\{x\times[-1,1]:x\in[-1,1]\big\}$. So we can  $C^1$-smoothly extend the coordinate of $\Sigma$ to $\Sigma_1=[-1-\epsilon,1+\epsilon]^2$ for some $\epsilon>0$, such that
	$$
	\cF^s\cap\Sigma_1~=~
	\big\{x\times[-1-\epsilon,1+\epsilon]:~
	x\in[-1-\epsilon,1+\epsilon]\big\},
	$$
	and $l_1=0\times[-1-\epsilon,1+\epsilon]=W^s_{\it loc}(\sigma,\phi_t^X)\cap\Sigma_1$. 
	
	Let $P_1:\Sigma_1\setminus l_1\rightarrow \Sigma_1$ be the Poincar\'e map associated to $\Sigma_1$, then it has the form $P_1(x,y)=(f_1(x), H_1(x,y))$ and $P_1|_{\Sigma\setminus l}\equiv P$. In particular, $f_1$ is $C^1$-smooth and $f_1|_{[-1,1]\setminus\{0\}}\equiv f$.
	From the definition of geometric Lorenz map, the one-dimensional quotient map $f:[-1,1]\setminus\{0\}\rightarrow[-1,1]$ satisfies $-1<f(x)<1$ and $f'(x)>\sqrt{2}$ everywhere. This implies $-1<f(-1)<0<f(1)<1$. Thus if we choose $\epsilon$ small enough, we also have $-1<f_1(1-\epsilon)<0<f(1+\epsilon)<1$. Shrinking $\epsilon$ if necessary, $P_1$ is well defined on $\Sigma_1\setminus l_1$, and $H_1,f_1$ satisfies all estimations of $H,f$. In particular, we have 
	$$
	\overline{P_1(\Sigma_1\setminus l_1)}=
	P_1(\Sigma_1\setminus l_1)\cup\{z^+,z^-\}
	\subseteq[-1,1]\times(-1,1).
	$$
\end{proof}

\begin{notation}\label{notat:section}
	From now on, we use $\Sigma,P,f,H$ to denote $\Sigma_1,P_1,f_1,H_1$ respectively for the simplicity of notations. We denote $\pi_x:\Sigma\rightarrow[-1-\epsilon,1+\epsilon]$ the projection to $x$-coordinate on $\Sigma$.
	Since $\lim_{x\rightarrow0}f'(x)=+\infty$, there exists $\lambda_0>\sqrt{2}$, such that $f'(x)>\lambda_0$ for every $x\in[-1-\epsilon,1+\epsilon]\setminus\{0\}$. 
	On the other hand, since $f(-1)>-1$, we must have $\lambda_0<2$.
	
	We fix $\alpha=1/(\sqrt{2}-1)$. The proof of Lemma \ref{lem:cone} shows that the cone field $\cC_{\alpha}$ satisfies
	$$
	DP(\cC_{\alpha}(x,y))~\subseteq~
	\cC_{\sqrt{2}\alpha/\lambda_0}(P(x,y)),
	\qquad \forall (x,y)\in\Sigma\setminus l.
	$$
\end{notation}

\begin{definition}\label{def:map}
	Let $\mathscr{L}^r~(r\geq 1)$ be the set consisting of $C^r$-maps $f:[-1,1]\setminus\{0\}\rightarrow(-1,1)$ which satisfies
	\begin{itemize}
		\item $\lim_{x\rightarrow0^-}f(x)=1$, and $\lim_{x\rightarrow0^-}f'(x)=+\infty$;
		\item $\lim_{x\rightarrow0^+}f(x)=-1$, and $\lim_{x\rightarrow0^+}f'(x)=+\infty$;
		\item $-1<f(x)<1$ and $f'(x)>\sqrt{2}$
		for every $x\in[-1,1]\setminus\{0\}$.
	\end{itemize}
	We call $f\in\mathscr{L}^r$ a \emph{Lorenz expanding map}.
\end{definition}

\begin{figure}[htbp]
	\centering
	\includegraphics[width=7cm]{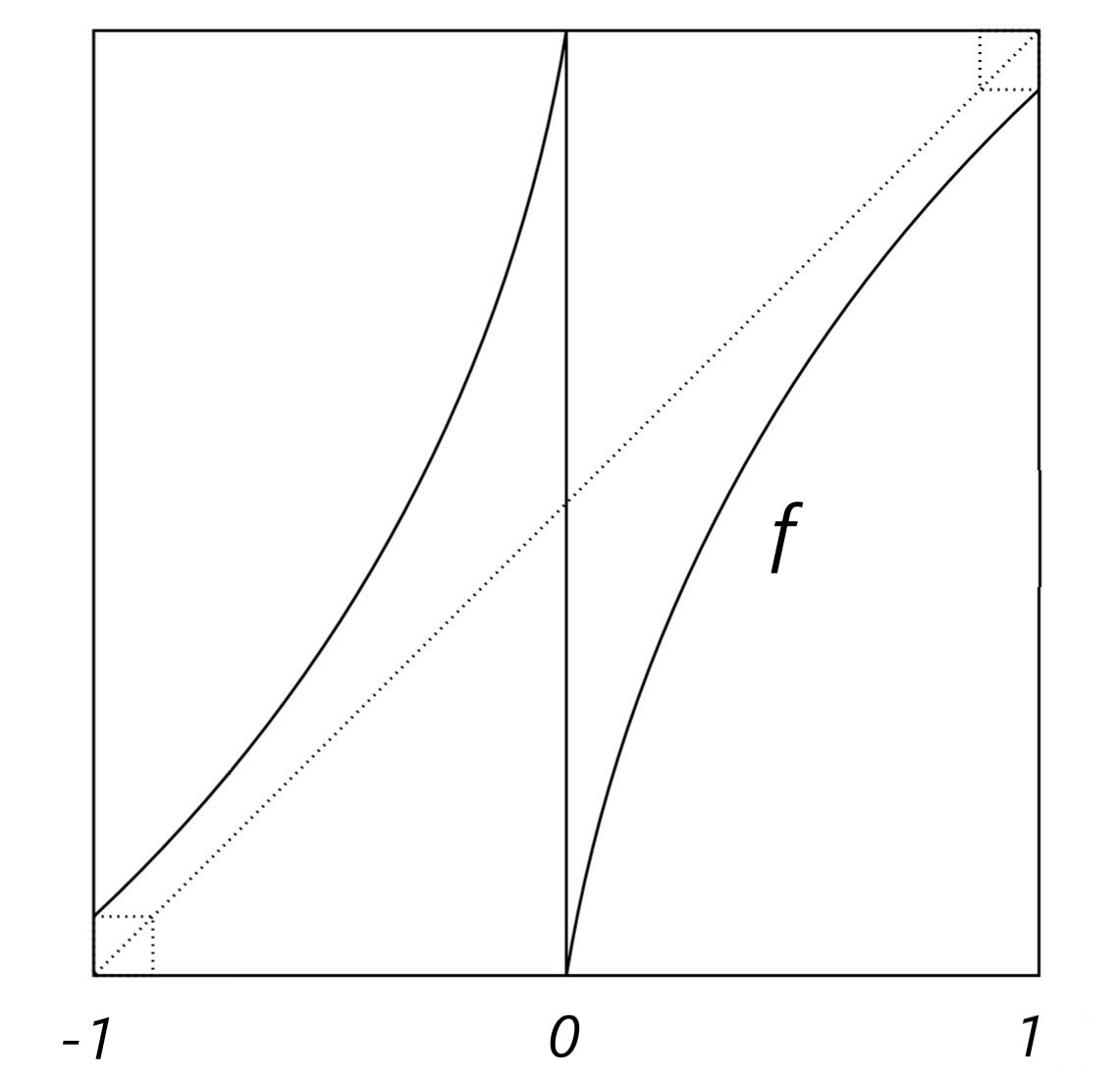}
	\caption{The Lorenz expanding map}
\end{figure}

The following lemma is standard in the study of geometric Lorenz attractors. We include its proof which illustrates the main idea of proving Claim \ref{clm:cover} in the proof of Theorem \ref{thm:Lorenz-connecting}.

\begin{lemma}\label{lem:evenually-onto}
	Let $f\in\mathscr{L}^r$, then $f$ is eventually onto, i.e. for every open interval $J\subset[-1,1]\setminus\{0\}$, there exists an integer $N>0$, such that $\bigcup_{i=0}^Nf^i(J)=(-1,1)$.
\end{lemma}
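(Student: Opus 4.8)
The plan is to combine expansion (each branch of $f$ has derivative $>\sqrt2$) with two elementary observations: the only obstruction to an interval growing under $f$ is hitting the singular point $0$ in its interior; and the image of an interval straddling $0$ contains one piece ``anchored'' at $1$ and one ``anchored'' at $-1$, because $\lim_{x\to0^-}f(x)=1$ and $\lim_{x\to0^+}f(x)=-1$. First I would record the numeric input we get for free from $f'>\sqrt2$: writing $-\beta:=f(-1)$ and $\gamma:=f(1)$, the fundamental theorem of calculus on $f|_{(-1,0)}$ and $f|_{(0,1)}$ gives $1-f(-1)=\int_{-1}^{0}f'>\sqrt2$ and $f(1)+1=\int_{0}^{1}f'>\sqrt2$, so $\beta,\gamma\in(\sqrt2-1,1)$; the same computation at the two zeros $a^*<0<b^*$ of $f$ gives $|a^*|,b^*<1/\sqrt2$, hence $|a^*|+b^*<\sqrt2$. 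I would also isolate a ``finishing chain'': if the forward‑invariant open set $\mathcal U:=\bigcup_{i\ge0}f^i(J)$ ever contains $(c,1)$ with $c<0$ or $(-1,d)$ with $d>0$ (equivalently, an interval straddling $0$ and reaching past $a^*$ or $b^*$), then applying $f$ at most three more times and using $\gamma>1-\sqrt2>-\beta$ one gets the chain $(-1,d)\mapsto(-\beta,1)\mapsto(-1,\gamma)\mapsto(-\beta,1)\cup(-1,f(\gamma))=(-1,1)$, so $\mathcal U=(-1,1)$.

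The core of the argument runs as follows. Starting from an open $J\subset[-1,1]\setminus\{0\}$, as long as $0$ is not in the interior of $f^i(J)$ the set $f^i(J)$ is an interval and $|f^{i+1}(J)|>\sqrt2\,|f^i(J)|$ (this still holds when $0$ is an endpoint, since the image is again an interval and the bound comes from $\int f'$); as lengths stay $\le2$, there is a first $n_1$ with $0$ in the interior of $f^{n_1}(J)=(a,b)$, $-1<a<0<b<1$. Since $f((a,0))$ is an interval with right endpoint $1$ and $f((0,b))$ one with left endpoint $-1$, we have $\mathcal U\supseteq(f(a),1)\cup(-1,f(b))$; if $f(a)<0$ or $f(b)>0$ we are in the finishing chain. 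Otherwise $a\ge a^*$, $b\le b^*$, so $(a,b)\subseteq[a^*,b^*]$ and $|(a,b)|<\sqrt2$; I would then iterate the two pieces $(f(a),1)\subseteq(0,1)$ and $(-1,f(b))\subseteq(-1,0)$ separately. By the same expansion estimate each re‑straddles $0$ after finitely many steps, and since the piece itself already has length $>\sqrt2|a|$ (resp.\ $>\sqrt2\,b$), the re‑straddling interval it produces inside $\mathcal U$ has length $>2|a|$ (resp.\ $>2b$). Choosing the appropriate piece at each round yields $0$‑straddling intervals in $\mathcal U$ of non‑decreasing length, and book‑keeping the expansion factors (the number of iterates needed to re‑straddle, together with the asymmetry of the interval produced) shows that within finitely many rounds one of them has length $>\sqrt2>|a^*|+b^*$; such an interval cannot lie in $[a^*,b^*]$, so we re‑enter the finishing chain and conclude $\mathcal U=(-1,1)$.

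To get the \emph{uniform} $N$: already at stage $n_1+1$ the union contains one‑sided neighbourhoods of $1$ and of $-1$, so $(-1,1)\setminus\bigcup_{i=0}^{n_1+1}f^i(J)$ is a compact subset of $(-1,1)$; being covered by the increasing family of open sets $\bigcup_{i=0}^{N}f^i(J)$ it is covered by one of them, giving $\bigcup_{i=0}^{N}f^i(J)=(-1,1)$.

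The hard part will be the book‑keeping step — ruling out that the $0$‑straddling intervals stay trapped inside $[a^*,b^*]$ forever. This is precisely where the hypothesis $f'>\sqrt2$ is essential (it is a ``no renormalization'' condition for Lorenz expanding maps), and it is the same mechanism that will be needed in the proof of Claim~\ref{clm:cover}.
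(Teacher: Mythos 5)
Your approach is genuinely different from the paper's. The paper's proof is a combinatorial counting argument: it sets $D_n=\{t\in J:f^m(t)=0\text{ for some }1\le m\le n-1\}$, proves (Claim~\ref{clm:counting}) that each component of $J\setminus D_n$ contributes at most one new discontinuity to $D_{n+2}$ unless the lemma already holds, deduces $\#D_{2n}\le 2^n-1$, and then contradicts boundedness: some component $I_{2n}$ of $J\setminus D_{2n}$ has $|I_{2n}|\ge 2^{-n}|J|$, while $|f^{2n}(I_{2n})|\ge\lambda_0^{2n}\,2^{-n}|J|=(\lambda_0^2/2)^n|J|\to\infty$, where $\lambda_0>\sqrt2$ is the \emph{strict} uniform lower bound on $f'$ coming from $\lim_{x\to 0}f'(x)=+\infty$ (Notation~\ref{notat:section}). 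You instead chase a single $0$-straddling interval through the dynamics and try to show its length must eventually exceed $\sqrt2>|a^*|+b^*$.

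The place you flag as ``the hard part'' is a genuine gap, and your stated estimates do not close it. You get $\ell_{n+1}>2\max(|a_n|,b_n)\ge\ell_n=|a_n|+b_n$, which is non-decreasing but degenerates exactly when the straddling interval becomes nearly symmetric ($|a_n|\approx b_n$) and re-straddling happens after a single step ($k_n=1$): then the gain is only $2\max(|a_n|,b_n)-(|a_n|+b_n)\to 0$, and a bounded monotone sequence can stall below $\sqrt2$. To rule this out you would need either a quantitative asymmetry estimate (which you gesture at but don't prove) or, more naturally, the uniform $\lambda_0>\sqrt2$ that the paper isolates — with only $f'>\sqrt2$ even the paper's counting bound gives $|f^{2n}(I_{2n})|\ge(\sqrt2)^{2n}\,2^{-n}|J|=|J|$, a constant rather than a blow-up. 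Your ``finishing chain'' (via $\beta,\gamma>\sqrt2-1$ and $f(\gamma)>1-\sqrt2>-\beta$) and the compactness argument for the uniform $N$ are both correct, but they sit downstream of the unresolved book-keeping. The paper's counting scheme sidesteps the stalling issue entirely by tracking the whole partition of $J$ rather than one orbit, and it produces the stronger output actually needed in Claim~\ref{clm:cover} (a \emph{single} subinterval $I$ with $(-1,1)\subseteq f^{n+1}(I)\cup f^{n+2}(I)$); I'd recommend switching to that mechanism.
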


\begin{proof}[New proof of Lemma \ref{lem:evenually-onto}]
	For every $n\geq 1$, we consider the map $f^n:J\rightarrow[-1,1]$. 
	%If for some $1\leq m\leq n-1$ and $t\in(0,\tau)$, it satisfies $f^m(t)=0$, then we delete $t$ in the domain of $f^n$, and call $t$ a discontinuity point of $f^n$. Let $D_n$ be the set of all discontinuity point of $f^n$. 
	Let $$D_n=\{t\in J: f^m(t)=0, \text{ for some $1\leq m\leq n-1$}\}~~\text{for $n\geq 2$ and } D_1=\emptyset.$$
	We call $t\in D_n$ a discontinuity point of $f^n$. Then $D_n$ is a finite subset of $J$ with $D_n\subseteq D_{n+1}$, and $f^n$ is well-defined and $C^1$-smooth on each connected component of $J\setminus D_n$. Moreover, if $(a,b)\subseteq J$ is a connected component of $J\setminus D_n$, then $f^n|_{(a,b)}$ is a diffeomorphism from $(a,b)$ to its image 
	$$
	f^n(a,b)~=~
	\big(\lim_{x\rightarrow a^+}f^n(x)~,~
	\lim_{x\rightarrow b^-}f^n(x)\big)
	\quad {\rm  and} \quad
	(f^n)'(x)\geq \lambda_0^n, \quad \forall x\in(a,b).
	$$

	\begin{claim}\label{clm:counting}
		For every $n\geq1$, assume $I=(a,b)$ is a connected component of 
		$J\setminus D_n$, then either $\#\big(I\cap D_{n+2}\big)\leq1$, or 
		$(-1,1)\subseteq f^{n+1}(I)\cup f^{n+2}(I)$.
	\end{claim}
	\begin{proof}[Proof of the claim]
		%\marginpar{\color{red} Comment 9}
		Recall that $f^n$ is well-defined and $C^1$-smooth on $I$. Let $(c,d)=f^n(I)$. Since $I\cap D_n=\emptyset$, one has that $f^m(t)\neq 0$ for any $t\in I$ and $1\leq m\leq n-1$ by definition of $D_n$. If $0\notin(c,d)$, which means that $f^n(t)\neq 0$ for any $t\in I$, then one has $I\cap D_{n+1}=\emptyset$ by definition of $D_{n+1}$.  As a consequence $f^{n+1}(I)=f(c,d)$ is an open interval in $(-1,1)$. Thus there exists at most one point $t\in I$, such that $f^{n+1}(t)=0$. This implies $\#\big(I\cap D_{n+2}\big)\leq1$.
		
		If on the other hand $0\in(c,d)=f^n(I)$, then we denote $t_0\in I$ satisfying $f^n(t_0)=0$. Then the two open intervals $(a,t_0)$ and $(t_0,b)$ are two connected components of $J\setminus D_{n+1}$. Moreover, their images by $f^{n+1}$ satisfies
		$$
		f^{n+1}(a,t_0)=\big(\lim_{x\rightarrow a^+}f(x),~1\big),
		\qquad {\rm and } \qquad
		f^{n+1}(t_0,b)=\big(-1,~\lim_{x\rightarrow a^+}f(x)\big).	
		$$
		If neither of them contains $0$, then we have $\#\big(I\cap D_{n+2}\big)=\#\big(I\cap D_{n+1}\big)=1$.
		
		Otherwise, we have either  $0\in f^{n+1}(a,t_0)$ or $0\in f^{n+1}(t_0,b)$. Thus either 
		$$
		[0,1)\subset f^{n+1}(a,t_0)\subset f^{n+1}(I),
		\qquad {\rm or} \qquad
		(-1,0]\subset f^{n+1}(t_0,b)\subset f^{n+1}(I).
		$$
		In both cases, iterating by $f$ again, we have 
		$(-1,1)\subseteq f^{n+1}(I)\cup f^{n+2}(I)$.
	\end{proof}
	
	Now we show that there exists some $n>0$ and a connected component $I$ of $J\setminus D_n$, such that $(-1,1)\subseteq f^{n+1}(I)\cup f^{n+2}(I)$. Otherwise, Claim \ref{clm:counting} implies
	$$
	\# D_2\leq1, \qquad 
	\# D_4\leq\# D_2+\# D_2+1\leq 3,
	\qquad \cdots \qquad 
	\# D_{2n}\leq \# D_{2n-2}+\# D_{2n-2}+1\leq 2^n-1.
	$$
	This implies $J\setminus D_{2n}$ has at most $2^n$ connected components. So there exists a connected component $I_{2n}\subset J\setminus D_{2n}$, such that its length satisfies $|I_{2n}|\geq 2^{-n}|J|$.
	
	Since $f$ is a diffeomorphism on each connected component, and satisfies $(f^{2n})'>\lambda_0^{2n}$, thus the length of the interval $f^{2n}(I_{2n})$ satisfies
	$$
	\left|f^{2n}(I_{2n})\right| 
	~\geq~ \lambda_0^{2n}\cdot|I_{2n}|
	~\geq~ \left(\frac{\lambda_0^2}{2}\right)^n\cdot|J|
	~\rightarrow~+\infty,
	\qquad {\rm as}\quad n\rightarrow+\infty.
	$$
	This is absurd since $f^{2n}(I_{2n})\subseteq(-1,1)$. We take $N=n+2$ finishing the proof of this lemma.
\end{proof}

The following proposition shows that every Lorenz attractor is a homoclinic class and every pair of periodic orbits are homoclinic related. Moreover, for every $r\in\mathbb{N}_{\geq 2}\cup\{\infty\}$, the set of $C^r$ vector fields exhibiting a geometric Lorenz attractor is an open subset of $\mathscr{X}^r(M^3)$.

\begin{proposition}\label{prop:Lorenz}
	Let $r\in\mathbb{N}_{\geq2}\cup\{\infty\}$  and $X\in\mathscr{X}^r(M^3)$. If $X$ exhibits a geometric Lorenz attractor $\Lambda$ with attracting region $U$, then $\Lambda$ is a singular hyperbolic homoclinic class of $X$, and the orbits of every two periodic points $p,q\in\Lambda$ are homoclinic related.
	
	Moreover, there exists a $C^r$-neighborhood $\cU$ of $X$ in $\mathscr{X}^r(M^3)$, such that for every $Y\in\cU$, $U$ is an attracting region of $Y$, and the maximal invariant set $\Lambda_Y=\bigcap_{t>0}\phi_t^Y(U)$ is a geometric Lorenz attractor.
	
	%In particular, the geometric Lorenz attractor is a robustly transitive singular hyperbolic attractor of $X$. 
\end{proposition}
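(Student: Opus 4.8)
The plan is to prove the two assertions separately, in both cases reducing to the one-dimensional quotient map $f$ of the Poincar\'e return map $P=(f,H)\colon\Sigma\setminus l\to\Sigma$ of Notation~\ref{notat:section}.

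For the homoclinic-class assertion, I would first record the hyperbolic structure of $P$ in the given coordinates: the leaves $\{x\}\times[-1-\epsilon,1+\epsilon]=\cF^s\cap\Sigma$ of Lemma~\ref{lem:stable} are uniformly contracted (since $\sup|\partial H/\partial y|<1$), while the cone field $\cC_\alpha$ is $DP$-invariant with vectors in it expanded in the $x$-direction at rate $>\lambda_0>\sqrt2$ (Lemma~\ref{lem:cone}, Notation~\ref{notat:section}). Hence every periodic orbit $\gamma\subset\Lambda$ avoiding $\sigma$ meets $\Sigma$ in a hyperbolic, stable-index-one fixed point of an iterate of $P$, whose stable manifold contains the full vertical segment through it and whose local unstable manifold is an unstable curve. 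The heart of the argument is then Lemma~\ref{lem:evenually-onto}: because $f$ is eventually onto, a forward iterate $P^n(W^u_{loc}(\bar p))$ has a connected component whose $x$-projection is an interval containing $\pi_x(\bar q)$ (this is exactly the mechanism the paper advertises for Claim~\ref{clm:cover}); that component is a nearly horizontal unstable curve crossing the vertical stable leaf of $\bar q$ transversally, which yields a transverse point of $W^u(\orb(p))\cap W^s(\orb(q))$. By symmetry $\orb(p)$ and $\orb(q)$ are homoclinically related. Combining the cone field, the eventually-onto property and a closing/shadowing argument — or simply quoting the classical density of periodic orbits in a geometric Lorenz attractor — one gets $\overline{\per(\Lambda)}=\Lambda$, and since all periodic orbits are pairwise homoclinically related this gives $\Lambda=H(p)$ for every (hence any) hyperbolic periodic $p\in\Lambda$; as periodic orbits are dense, $\sigma\in H(p)$ as well.

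For openness, fix $Y$ in a small $C^1$-neighborhood of $X$. Transversality of $Y$ to the compact boundary $\partial U$ pointing inward is $C^1$-open, so $U$ is an attracting region of $Y$ and $\Lambda_Y:=\bigcap_{t>0}\phi_t^Y(U)$ is its maximal invariant set; by Remark~\ref{Rem:singular hyperbolicity}(2), after shrinking the neighborhood, $\Lambda_Y$ is singular hyperbolic with one-dimensional strong stable bundle. Since any singularity of $X$ in $U$ lies in $\bigcap_t\phi_t^X(U)=\Lambda$, the only singularity of $X$ in $U$ is $\sigma$, and as $X$ is non-vanishing on $\partial U$ a $C^1$-close $Y$ has in $U$ a unique singularity $\sigma_Y$, hyperbolic, whose eigenvalues still satisfy $\lambda_1<\lambda_2<0<\lambda_3$, $\lambda_1+\lambda_3<0$, $\lambda_2+\lambda_3>0$ (open conditions on $DY(\sigma_Y)$). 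The section $\Sigma_1$ of Lemma~\ref{lem:big-section} stays transverse to $Y$; applying Lemma~\ref{lem:stable} to $Y$ produces the $Y$-stable foliation, whose trace on $\Sigma_1$ furnishes a new vertical coordinate in which $l_1^Y=W^s_{loc}(\sigma_Y)\cap\Sigma_1$ is again $\{0\}\times[\cdot]$ and $P_1^Y$ has the skew form $(f^Y(x),H^Y(x,y))$. Because $\overline{P_1(\Sigma_1\setminus l_1)}$ lies strictly inside $\Sigma_1$, $P_1^Y$ is still a well-defined return map; on compact subsets away from $l_1$ it is $C^1$-close to $P_1$, so $|\partial H^Y/\partial y|<1$ and $(f^Y)'>\sqrt2$ there, while near $l_1^Y$ the unboundedly long return time combined with the uniform contraction of $E^{ss}$ (resp.\ with $\lambda_3>0$) forces $|\partial H^Y/\partial y|$ small (resp.\ $(f^Y)'\to+\infty$), and the separatrices of $\sigma_Y$ return near $z^\pm$. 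Normalizing the $x$- and $y$-coordinates by a diffeomorphism sending the stable leaves through the two separatrix returns and the (fixed) $y$-range to $[-1,1]$ and fixing $0$, one checks $f^Y\in\mathscr{L}^r$ and the $H^Y$-estimates of Definition~\ref{def:Lorenz}; transitivity of $\Lambda_Y$ then follows from $f^Y\in\mathscr{L}^r$ exactly as for $X$, so $\Lambda_Y$ is a geometric Lorenz attractor. The remaining clause — every $Y$-orbit in $U\setminus W^s_{loc}(\sigma_Y)$ meets $\Sigma^Y$ — follows from the same property for $X$ via a compactness/uniform-return-time argument on $U$ minus a neighborhood of $W^s_{loc}(\sigma)$.

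The main obstacle is the estimate work in the openness part: showing that \emph{all} clauses of Definition~\ref{def:Lorenz} persist, and in particular that $\sup|\partial H^Y/\partial y|<1$ holds across the \emph{perturbed} discontinuity line $l_1^Y$ — where one must balance the only mildly contracting bounded-return-time region against the strongly contracting long-return-time region near $\sigma_Y$ — together with the persistence of the global-cross-section property and the coordinate normalization. The conceptual ingredients (robustness of singular hyperbolicity, openness of the eigenvalue conditions, robustness of the stable foliation) are comparatively soft, which is why this verification is the content deferred to the appendix.
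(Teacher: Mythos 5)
Your outline for the first part (homoclinic class via the eventually-onto property of the quotient map plus transverse intersections of unstable curves with vertical stable leaves) matches the paper, which simply cites \cite{lorenz-homoclinic} for the homoclinic-class statement and then argues exactly as you do for the homoclinic relation between two periodic orbits. So far so good.

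For the openness part there is a genuine gap, and it is precisely where you decide the remaining work is ``comparatively soft.'' After passing to the $Y$-stable-foliation coordinate on $\Sigma_1$, the derivative $(f^Y)'$ in the \emph{new} $x'$-coordinate involves, by the chain rule, the derivative of the holonomy map $h^s_{Y,\Sigma}$ of the $Y$-stable foliation along the fixed transversal $J_0=[-1-\epsilon,1+\epsilon]\times\{0\}$, composed with $\partial(\pi_x\circ P_Y)/\partial x$ in the old coordinate and the derivative of the linear normalization $h_Y$. To conclude $(f^Y)'>\sqrt{2}$ you must therefore know that $Dh^s_{Y,\Sigma}$ is close to $1$. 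But for a partially hyperbolic flow the stable holonomy is in general only H\"older --- it is \emph{not} automatic that it is differentiable, let alone $C^1$-close to the identity. The paper's Claim~\ref{clm:holonomy} is exactly this statement, and it is the actual technical core of the proposition: it relies on the observation of Ara\'ujo--Melbourne \cite[Lemma 7.1]{am2} that a codimension-one absolutely continuous foliation has a $C^{1+}$ holonomy, together with an explicit Jacobian formula (an infinite product along the strong-stable orbit, controlled term by term) which furnishes the $C^1$-convergence $Dh^s_{Y,\Sigma}\to \mathrm{Id}$ as $Y\to X$ \emph{in the $C^2$-topology}. This is also why the statement carries the hypothesis $r\geq 2$; if the holonomy regularity were ``soft,'' a $C^1$-perturbation argument would suffice. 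Your sketch flags the ``coordinate normalization'' as a detail but treats ``robustness of the stable foliation'' as conceptually trivial; as written, it does not contain the mechanism needed to prove $(f^Y)'>\sqrt{2}$ in the normalized coordinate, so the verification of Definition~\ref{def:Lorenz} for $\Lambda_Y$ is incomplete at the decisive step.

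A secondary point: you suggest that on compact subsets away from $l_1$ the new return map $P_1^Y$ is automatically $C^1$-close to $P_1$. Note that this comparison is between maps written in \emph{different} coordinates (one adapted to $\cF^s_X$, the other to $\cF^s_Y$), and the closeness claim silently uses exactly the $C^1$-control on the stable-foliation holonomy described above; it is not an independent reduction.
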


\begin{remark}
	The proof of Proposition~\ref{prop:Lorenz} is in Appendix~\ref{Section:robust}.
	The first part  is classical and the ``moreover'' part
	%The proof of robustness of geometric Lorenz attractors is in the Appendix. 
	relies on a beautiful observation in \cite{am2}, which shows the stable foliation of the Poincar\'e map in the cross section is $C^{1+}$-smooth.
\end{remark}

\subsection{A $C^r$-connecting theorem of geometric Lorenz attractors}\label{subsec:connecting}

This subsection is devoted to prove Theorem~\ref{Thm:A-connecting} which is a $C^r$-connecting theorem for geometric Lorenz attractors. One can see that Theorem~\ref{Thm:A-connecting} follows from the following technical one with more details.

\begin{theorem}[Theorem~\ref{Thm:A-connecting} revisited]\label{thm:Lorenz-connecting}
	Assume $r\in\mathbb{N}_{\geq2}\cup\{\infty\}$ and $X\in\mathcal{X}^r(M^3)$ admits a geometric Lorenz attractor $\Lambda$ with singularity $\sigma$ and attracting region $U$. Let $\Sigma$ be the cross section of $\Lambda$ from Notation~\ref{notat:section}, and $z\in W^u(\sigma)\cap\Sigma$ be a point satisfying $\phi_t^X(z)\notin\Sigma$ for every $t<0$.
	
	For every $C^r$-neighborhood $\cV$ of $X$, every critical element $p\in\Lambda$ and every $\delta_0>0$, there exist $0<\delta<\delta_0$ and $Y\in\cV$, such that the maximal invariant set $\Lambda_Y=\bigcap_{t>0}\phi_t^Y(U)$ is a geometric Lorenz attractor satisfying
	\begin{enumerate}
		\item The vector field $Y$ and $\delta$-neighborhood $B(z,\delta)$ of $z$ satisfy
		$$
		Y|_{M^3\setminus B(z,\delta)}~\equiv~ X|_{M^3\setminus B(z,\delta)}, 
		\qquad {\it and} \qquad
		B(z,\delta)~\cap~\left(\orb(p,\phi_t^X)\cup\{\sigma\}\right)
		~=~\emptyset.
		$$
		This implies
		$\sigma_Y=\sigma\in\sing(\Lambda_Y)$, and 
		$p\in\per(\Lambda_Y)$ with
		$\orb(p,\phi_t^Y)=\orb(p,\phi_t^X)$.
		
		\item The point $z\in W^u(\sigma,\phi_t^Y)$ and there exists $T>0$, such that
		$\phi^Y_T(z)\in W^s_{\it loc}(p,\phi_t^Y)$. This implies 
		$$
		\orb(z,\phi_t^Y)~\subset~
		W^u(\sigma,\phi_t^Y) \cap W^s(\orb(p),\phi_t^Y).
		$$
	\end{enumerate}
\end{theorem}

Before proving Theorem~\ref{thm:Lorenz-connecting}, we would like to state the following direct corollary by considering the critical element $p$ as the singularity $\sigma$ itself, which shows that $C^r$-densely, the singularity in the geometric Lorenz attractor admits a homoclinic orbit.

\begin{corollary}\label{Cor:dense-loop}
	
	Assume $r\in\mathbb{N}_{\geq2}\cup\{\infty\}$.  There exists a dense subset $\mathcal{D}^r_h\subset \mathscr{X}^r(M^3)$, such that if $X\in \mathcal{D}^r_h$ admits a geometric Lorenz attractor with singularity $\sigma$, then $\sigma$ has a homoclinic orbit.
\end{corollary}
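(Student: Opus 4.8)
The plan is to deduce Corollary~\ref{Cor:dense-loop} from Theorem~\ref{thm:Lorenz-connecting} by taking the critical element $p$ in that theorem to be the singularity $\sigma$ itself, which is legitimate since the theorem explicitly allows $p=\sigma$. First I would fix $r\in\mathbb{N}_{\geq2}\cup\{\infty\}$ and let $\mathscr{O}^r\subset\mathscr{X}^r(M^3)$ be the set of vector fields admitting a geometric Lorenz attractor; by Proposition~\ref{prop:Lorenz} this is an open subset of $\mathscr{X}^r(M^3)$. For each $X\in\mathscr{O}^r$ with geometric Lorenz attractor $\Lambda$, singularity $\sigma$, and attracting region $U$, choose a point $z\in W^u(\sigma)\cap\Sigma$ with $\phi_t^X(z)\notin\Sigma$ for all $t<0$ (such a point exists by Remark~\ref{rk:unstable}, e.g.\ $z=z^+$). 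Apply Theorem~\ref{thm:Lorenz-connecting} with this $z$, with $p=\sigma$, and with an arbitrarily small $C^r$-neighborhood $\cV$ of $X$: we obtain $Y\in\cV$ whose maximal invariant set in $U$ is again a geometric Lorenz attractor, with $\sigma_Y=\sigma$, and with
$$
\orb(z,\phi_t^Y)\subset W^u(\sigma,\phi_t^Y)\cap W^s(\orb(\sigma),\phi_t^Y)=W^u(\sigma,\phi_t^Y)\cap W^s(\sigma,\phi_t^Y).
$$
Thus $\orb(z,\phi_t^Y)$ is a homoclinic orbit of $\sigma$ for $Y$, so $Y$ lies in the desired set.

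Next I would assemble the dense set. For every $X\in\mathscr{O}^r$ and every $n\in\mathbb{N}$, the above produces a vector field in the ball $B_{C^r}(X,1/n)$ admitting a geometric Lorenz attractor whose singularity has a homoclinic orbit; let $\mathcal{D}^r_h$ be the collection of all such vector fields obtained this way (over all $X\in\mathscr{O}^r$ and all $n$), together with all of $\mathscr{X}^r(M^3)\setminus\overline{\mathscr{O}^r}$. Then $\mathcal{D}^r_h$ is dense: any $X$ in the open set $\mathscr{O}^r$ is approximated by elements of $\mathcal{D}^r_h$ by construction, any $X$ outside $\overline{\mathscr{O}^r}$ lies in $\mathcal{D}^r_h$, and points of the boundary $\partial\mathscr{O}^r$ are approximated either from inside $\mathscr{O}^r$ (already handled) or from the exterior. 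By construction every $X\in\mathcal{D}^r_h$ that admits a geometric Lorenz attractor has its singularity possessing a homoclinic orbit, which is exactly the claim.

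There is essentially no serious obstacle here, since Theorem~\ref{thm:Lorenz-connecting} does all the work; the only minor care is bookkeeping about density near the boundary $\partial\mathscr{O}^r$, which is resolved simply by noting that a vector field which does not admit a geometric Lorenz attractor satisfies the conclusion vacuously, so it may be freely included in $\mathcal{D}^r_h$. The one genuine input one must not overlook is that the perturbed vector field $Y$ \emph{still} admits a geometric Lorenz attractor (so that ``$\sigma$ is the singularity of a geometric Lorenz attractor of $Y$'' is meaningful) — but this is built into the statement of Theorem~\ref{thm:Lorenz-connecting}, which asserts that $\Lambda_Y=\bigcap_{t>0}\phi_t^Y(U)$ is a geometric Lorenz attractor. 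Hence the proof of Corollary~\ref{Cor:dense-loop} is a short direct application.
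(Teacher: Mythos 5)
Your strategy is exactly the one the paper intends — apply Theorem~\ref{thm:Lorenz-connecting} with $p=\sigma$ to manufacture an orbit in $W^u(\sigma)\cap W^s(\sigma)$, and observe that the theorem also guarantees the perturbed field still has a geometric Lorenz attractor. That part is correct, and your handling of density near $\partial\mathscr{O}^r$ and the vacuous case is fine.

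There is, however, a genuine gap in the assembly of $\mathcal{D}^r_h$. You fix \emph{one} geometric Lorenz attractor $\Lambda$ of $X$ and produce a perturbation $Y$ supported in a small ball $B(z,\delta)$ inside the attracting region of $\Lambda$. But $X$ may admit several geometric Lorenz attractors, say $\Lambda_1,\dots,\Lambda_k$ with singularities $\sigma_1,\dots,\sigma_k$. Since the perturbation is supported in $B(z,\delta)\subset U_1$, the other attractors $\Lambda_2,\dots,\Lambda_k$ persist for $Y$ \emph{unchanged}, and if $\sigma_2,\dots,\sigma_k$ had no homoclinic orbit for $X$ they still have none for $Y$. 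Hence the closing sentence, ``by construction every $X\in\mathcal{D}^r_h$ that admits a geometric Lorenz attractor has its singularity possessing a homoclinic orbit,'' does not follow: it fails precisely for those $Y$ obtained from an $X$ with more than one Lorenz attractor, once the hypothesis of the corollary is applied to one of the untouched $\Lambda_i$.

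The fix is the same device the paper uses in the dense part of Theorem~\ref{Thm:A}. A vector field has only finitely many geometric Lorenz attractors (their singularities are hyperbolic, hence isolated, and $M$ is compact), and their attracting regions can be taken pairwise disjoint. Choose the $C^r$-neighborhood $\cV$ of $X$ to satisfy the composition property $(F)$ of Section~\ref{Section:connecting lemmas}, apply Theorem~\ref{thm:Lorenz-connecting} once inside each attracting region $U_i$ to obtain $Y_i\in\cV$ with $Y_i=X$ off $U_i$ and a homoclinic orbit for $\sigma_i$, and then compose: the resulting $Y$ lies in $\cV$ and carries a homoclinic loop for every singularity of every geometric Lorenz attractor. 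With this repair the rest of your argument goes through.
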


Now we give the proof of Theorem~\ref{thm:Lorenz-connecting}.
\begin{proof}[Proof of Theorem~\ref{thm:Lorenz-connecting}]
	
	By Proposition \ref{prop:Lorenz}, we can shrink the $C^r$-neighborhood $\cV$ of $X$ if necessary, such that for every $Y\in\cV$, the maximal invariant set $\Lambda_Y=\bigcap_{t>0}\phi_t^Y(U)$ is a geometric Lorenz attractor.
	
	There are two points $z^\pm\in W^u(\sigma)\cap\Sigma$ satisfying $\phi_t^X(z^\pm)\notin\Sigma$ for every $t<0$. 
	We assume $z=z^+=(-1,y^+)\in\Sigma$. 
	The proof for $z=z^-$ is the same. 
	
	For the critical element $p\in\Lambda$, there are two possibilities: either $p=\sigma$, or $p$ is a periodic point. 
	\begin{itemize}
		\item  If $p=\sigma$, there exists $\delta\in(0,\delta_0)$, such that  $B(z,\delta)\cap W^s_{\it loc}(\sigma)=\emptyset$. We only need to show there exists a vector field $Y\in\cV$ which satisfies $Y|_{M^3\setminus B(z,\delta)}~\equiv~ X|_{M^3\setminus B(z,\delta)}$, such that for some $T>0$, 
		$$
		z\in W^u(\sigma,\phi_t^Y)\cap\Sigma,
		\qquad {\rm and} \qquad
		\phi_T^Y(z) \in l=
		\Sigma\cap W^s_{\it loc}(\sigma,\phi_t^Y).
		$$	
		\item	If $p$ is a periodic point, then we can assume $p\in\Sigma$ with 
		$$
		p=(x_p,y_p)\in\big((-1,1)\setminus\{0\}\big)\times[-1,1]
		\quad {\rm and} \quad
		l_p=\{x_p\}\times[-1-\epsilon,1+\epsilon]\subset W^s_{\it loc}(\orb(p),\phi_t^X).
		$$
		There exists $\delta\in(0,\delta_0)$, such that
		$B(z,\delta)\cap
		\left(\bigcup_{t\geq0}\phi_t^X(l_p)\cup\{\sigma\}\right)
		=\emptyset$.
		We need to show there exists a vector field $Y\in\cV$ which  satisfies 
		$Y|_{M^3\setminus B(z,\delta)}~\equiv~ X|_{M^3\setminus B(z,\delta)}$, such that for some $T>0$,
		$$
		z\in W^u(\sigma,\phi_t^Y)\cap\Sigma,
		\qquad {\rm and} \qquad
		\phi_T^Y(z)\in l_p
		\subset\Sigma\cap W^s_{\it loc}(\orb(p),\phi_t^Y).
		$$
	\end{itemize}
	From now on, we fix $\delta\in(0,\delta_0)$ which satisfies the above property. Recall that the cross section $\Sigma=[-1-\epsilon,1+\epsilon]^2$ for some $\epsilon>0$ and $P:\Sigma\setminus l\rightarrow\Sigma$ is the Poincar\'e map.

	\begin{claim}\label{clm:nbhd}
		There exist $\eta\in(0,\epsilon)$ and $t_0>0$, such that the cross section
		$\Sigma_z=[-1-\eta,-1+\eta]\times[y^+-\eta,y^++\eta] \subset\Sigma$ at $z$ satisfies
		\begin{enumerate}
			\item\label{item:1} $P(\Sigma_z)\cap\Sigma_z=\emptyset$ and there exists $\eta'>0$ such that $P^{-1}(\Sigma_z)\subset(0,\eta')\times[-1-\epsilon,1+\epsilon]$ and
			$f'(x)>10$ for every $x\in(0,\eta')$.
			\item\label{item:2} The set $\phi_{[0,3t_0]}^X(\Sigma_z)=\bigcup_{t\in[0,3t_0]}\phi_t^X(\Sigma_z)~\subset~ B(z,\delta)$ and $\phi_t^X(z)\notin\phi_{[0,3t_0]}^X(\Sigma_z)$ for each $t<0$. 
			\item\label{item:3} We have $(1-10^{-3})f'(x_1)<f'(x_2)<(1+10^{-3})f'(x_1)$ for every $x_1,x_2\in[-1-\eta,-1+\eta]$.
		\end{enumerate} 
	\end{claim}
	
	\begin{proof}
		The fact that $f(-1)>-1$ implies if $\eta$ is small enough, then $f([-1-\eta,-1+\eta])\cap[-1-\eta,-1+\eta]=\emptyset$. Thus $P(\Sigma_z)\cap\Sigma_z=\emptyset$.
		
		Since $P(\Sigma)\subset[-1,1]\times(-1,1)$, if a point $w\in\Sigma$ satisfies $P(w)\in\Sigma_z$, then $\pi_x\circ P(w)\in(-1,-1+\eta]$. 
		The one-dimensional map $f$ satisfies $f(-1-\epsilon)>-1$, so if $\eta<f(-1-\epsilon)+1$, then there exists some $\eta'>0$, such that $P^{-1}(\Sigma_z)\subset   (0,\eta']\times[-1-\epsilon,1+\epsilon]$. 
		If $\eta\rightarrow 0^+$, then $\eta'\rightarrow0^+$, and $f'(x)\rightarrow+\infty$ as $x\in(0,\eta')$. So we only need to shrink $\eta$ to take $\eta'$ small enough, such that $f'(x)>10$ for every $x\in(0,\eta')$. This shows item~\ref{item:1}.

		To verify item~\ref{item:2}, note that $\Sigma_z=[-1-\eta,-1+\eta]\times[y^+-\eta,y^++\eta]$ is an $\eta$-square neighborhood of $z=(-1,y^+)$ in the cross section $\Sigma$ and there exists $t_0>0$ such that the orbit segment $\phi_{[-t_0,4t_0]}^X(z)\subset B(z,\delta)$. 
		Thus shrinking $\eta$ again if necessary, the flow box $\phi_{[0,3t_0]}^X(\Sigma_z)=\bigcup_{t\in[0,3t_0]}\phi_t^X(\Sigma_z)$ would be contained in the open ball $B(z,\delta)$. Moreover, recall that $z\in W^u(\sigma)$ satisfies $\phi^X_t(z)\cap \Sigma=\emptyset$ for each $t<0$ and $\Sigma_z\subset\Sigma$, thus $\phi_t^X(z)\notin\phi_{[0,3t_0]}^X(\Sigma_z)$ for each $t<0$. This proves item~\ref{item:2}.%\marginpar{\color{red} Comment 10}
		
		Finally, note that $f'$ is continuous since the quotient map $f$ is $C^1$-smooth.
		Thus by shrinking $\eta$ a last time if necessary, the following satisfies 
		\[\frac{1}{\sqrt{1+10^{-3}}}f'(-1)<f'(t)<\sqrt{1+10^{-3}}f'(-1) \text{~~for every~~} t\in[-1-\eta,-1+\eta].\]
		As a consequence, for every $x_1,x_2\in[-1-\eta,-1+\eta]$, one has
		\[1-10^{-3}<\frac{1}{1+10^{-3}}<\frac{f'(x_2)}{f'(x_1)}<1+10^{-3},\] 
		and thus $(1-10^{-3})f'(x_1)<f'(x_2)<(1+10^{-3})f'(x_1)$ which is item~\ref{item:3}.
	\end{proof}
	
	The subset $\phi_{[0,3t_0]}^X(\Sigma_z)$ admits a natural coordinate: for every $w\in\phi_{[0,3t_0]}^X(\Sigma_z)$,
	there exist a unique point $w_0=(x_0,y_0)\in\Sigma_z=[-1-\eta,-1+\eta]\times[y^+-\eta,y^++\eta]$ and $t_w\in[0,3t_0]$, such that $w=\phi_{t_w}^X(w_0)$. Then we denote 
	$$
	w=(x_0,y_0,t_w)~\in~ \phi_{[0,3t_0]}^X(\Sigma_z)
	=[-1-\eta,-1+\eta]\times[y^+-\eta,y^++\eta]\times[0,3t_0].
	$$ 
	We point out here that this coordinate is only $C^1$-smooth.
	
	Let $Z\in\mathscr{X}^\infty(M^3)$ be a smooth vector field satisfying $\supp(Z)\subset\phi_{[t_0,2t_0]}^X(\Sigma_z)\subset B(z,\delta)$ such that if we represent $Z$ in the coordinate $\{(x,y,t)\}$ as
	$$
	Z(x,y,t)=
	a(x,y,t)\cdot\frac{\partial}{\partial x}+
	b(x,y,t)\cdot\frac{\partial}{\partial y}+
	c(x,y,t)\cdot\frac{\partial}{\partial t}
	$$
	then for every $(x,y,t)\in\phi_{[t_0,2t_0]}^X(\Sigma_z)$, it satisfies the following properties
	\begin{enumerate}
		\item $\max\big\{ |b(x,y,t)|,|c(x,y,t)| \big\}\leq  \beta|a(x,y,t)|$ and $|a(x,y,t)|<\beta$ 
		%for every $(x,y,t)\in\phi_{[t_0,2t_0]}^X(\Sigma_z)$ 
		where $0<\beta<10^{-3}$ is a small constant to be determined in Claim~\ref{clm:vector-field-family} below.
		\item $a(x,y,t)\geq0$ and $a(-1,y^+,3t_0/2)>0$.
	\end{enumerate}
	Here since the coordinate $\{(x,y,t)\}$ is $C^1$-smooth, the three functions $a,b,c$ are only uniformly continuous.
	
	\begin{claim}\label{clm:vector-field-family}
		Let $X_s=X+s\cdot Z\in\mathscr{X}^r(M^3)$ for $s\in[0,1]$. Then there exists $\beta>0$ such that the $C^r$-vector field family $\{X_s\}_{s\in[0,1]}$ satisfies the following properties:
		\begin{enumerate}
			\item\label{item:convergence} $X_0=X$ and $X_s$ converges to $X$ in $C^r$-topology as $s\rightarrow 0$.
			\item\label{item:unstable} For every $s\in[0,1]$, $\sigma$ is the singularity of $X_s$ and $z\in W^u(\sigma,\phi_t^{X_s})$.
			\item\label{item:support} For every $s\in[0,1]$, $X_s|_{\phi_{[t_0,2t_0]}^X(\Sigma_z)}\equiv X|_{\phi_{[t_0,2t_0]}^X(\Sigma_z)}$.
			\item\label{item:return map} For every $s\in[0,1]$, the first return map $P_s:\Sigma\setminus l\rightarrow\Sigma$ is well-defined for the flow $\phi_t^{X_s}$, and satisfies 
			$$
			P_s|_{\Sigma\setminus\Sigma_z}~\equiv~
			P|_{\Sigma\setminus\Sigma_z}.
			$$
			\item\label{item:curve} For every $w\in\Sigma_z$, the curve $P_s(w):[0,1]\rightarrow\Sigma$ is $C^r$-smooth with respect to $s$, and  satisfies 
			$$
			\frac{{\rm d}\pi_x\circ P_s(w)}{{\rm d}s}\geq 0,
			\qquad \text { and } \qquad
			\frac{{\rm d}P_s(w)}{{\rm d}s}|_{s=s'}\in
			\cC_{\alpha}(P_{s'}(w)), ~\forall s'\in[0,1].
			$$ 
			Moreover, for the point $z$, the curve $P_s(z):[0,1]\rightarrow\Sigma$ satisfies
			${\rm d}\pi_x\circ P_s(z)/{\rm d}s>0$ at the point $s=0$.
		\end{enumerate} 
	\end{claim}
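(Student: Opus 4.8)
The plan is to establish the five listed properties of $X_s=X+sZ$ one at a time, using throughout that the perturbation $sZ$ is supported in the thin flow box $\phi^X_{[t_0,2t_0]}(\Sigma_z)\subseteq B(z,\delta)$, which by Claim \ref{clm:nbhd} is disjoint from $\orb(p,\phi^X_t)\cup\{\sigma\}$, which (after shrinking $t_0$ below the first return time of $\Sigma_z$ to $\Sigma$) meets the cross-section $\Sigma$ only along $\Sigma_z$, and which satisfies $P(\Sigma_z)\cap\Sigma_z=\emptyset$. Items \eqref{item:convergence} and \eqref{item:support} are immediate: $X_0=X$ and $\|X_s-X\|_{C^r}=s\|Z\|_{C^r}\to 0$ as $s\to 0$, while $X_s-X=sZ$ vanishes off $\supp(Z)$. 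For item \eqref{item:unstable}: since $B(z,\delta)\cap\{\sigma\}=\emptyset$ we get $Z(\sigma)=0$, so $\sigma$ is a singularity of every $X_s$; and since the backward orbit segment $\{\phi^X_{-\tau}(z):\tau\ge 0\}$ is disjoint from $\supp(Z)$ (by Claim \ref{clm:nbhd}\eqref{item:2} and $z\notin\supp(Z)$), it is also the backward $X_s$-orbit of $z$, hence converges to $\sigma$, so $z\in W^u(\sigma,\phi^{X_s}_t)$.

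The core of item \eqref{item:return map} is that a forward $X_s$-orbit starting on $\Sigma\setminus\Sigma_z$ cannot meet $\supp(Z)$ before it first returns to $\Sigma$. Indeed, in the flow-box coordinates $(x,y,t')$ one has $X=\partial/\partial t'$ and $X_s=\partial/\partial t'+sZ$, whose $t'$-component $1+sc$ is positive once $\beta$ is small; so $t'$ is strictly increasing along $X_s$-orbits, and the backward $X_s$-orbit of any point of $\supp(Z)\subseteq\{t_0\le t'\le 2t_0\}$ stays in the flow box and reaches $\Sigma_z=\{t'=0\}$. Since $\Sigma$ is a cross-section and $\Sigma\cap\phi^X_{[0,3t_0]}(\Sigma_z)=\Sigma_z$, an $X_s$-orbit issued from $\Sigma\setminus\Sigma_z$ cannot cross $\Sigma_z$ — hence cannot enter $\supp(Z)$ — before its first return to $\Sigma$; therefore that return is unperturbed and $P_s\equiv P$ on $\Sigma\setminus\Sigma_z$. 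For $w\in\Sigma_z$ the orbit is genuinely pushed by $Z$, but with $\beta$ small it exits the flow box through $\phi^X_{3t_0}(\Sigma_z)$ near $\phi^X_{3t_0}(w)$, follows $X$ back to $\Sigma$ without re-entering $\supp(Z)$, and lands near $P(w)$, which is off $\Sigma_z$ because $P(\Sigma_z)\cap\Sigma_z=\emptyset$. This proves $P_s$ is well-defined on $\Sigma\setminus l$, and then $\Lambda_{X_s}$ is again a geometric Lorenz attractor by Proposition \ref{prop:Lorenz}.

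For item \eqref{item:curve} the plan is to first record the explicit description of $P_s$ on $\Sigma_z$: for $w=(x_0,y_0)\in\Sigma_z$ the perturbed orbit crosses $\phi^X_{3t_0}(\Sigma_z)$ at the point with flow-box coordinates $\big(x_s(3t_0),y_s(3t_0),3t_0\big)$, so that, flowing back by $X$ to $\Sigma_z$ and applying $P$,
$$P_s(w)=\big(f(x_s(3t_0)),\,H(x_s(3t_0),y_s(3t_0))\big),\qquad\text{in particular}\qquad \pi_x\circ P_s(w)=f(x_s(3t_0)).$$
Smoothness of $s\mapsto P_s(w)$ comes from $C^r$-dependence of the flow of the jointly $C^r$ family $X_s$ on $s$, together with the implicit function theorem for the crossing time. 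Because the flow-box coordinates straighten $X$ to $\partial/\partial t'$, the coordinate field $\partial/\partial x$ is $\Phi^X_t$-invariant, so the Duhamel solution of the variational equation $\dot\xi=DX_s\,\xi+Z$ along the orbit of $w$, evaluated at the return to $\Sigma$, is — to leading order in $\beta$ — equal to $\big(\int_{t_0}^{2t_0}a\,{\rm d}\tau\big)\,DP(\partial/\partial x)$. Since $\pi_x\circ P(x,y)=f(x)$ does not depend on $y$, its $\pi_x$-component is $\big(\int_{t_0}^{2t_0}a\,{\rm d}\tau\big)f'(x_0)\ge 0$; and since the perturbation direction lies in $\cC_\alpha$ while $DP(\cC_\alpha)\subseteq\cC_{\sqrt2\alpha/\lambda_0}$ is strictly interior to $\cC_\alpha$ (Notation \ref{notat:section}), the derivative ${\rm d}P_s(w)/{\rm d}s$ stays in $\cC_\alpha(P_s(w))$ once $\beta$ is small enough to be absorbed by this gap. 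The full inequality ${\rm d}(\pi_x\circ P_s(w))/{\rm d}s\ge 0$ for every $s$ is the monotonicity $x_{s_1}(3t_0)\le x_{s_2}(3t_0)$ for $s_1\le s_2$, which I would deduce from a scalar comparison argument applied to the $x$-equation ${\rm d}x/{\rm d}t'=sa/(1+sc)$ — its right-hand side being non-negative and non-decreasing in $s$ — after choosing the flow-box components of $Z$ so that this reduction is legitimate; and the strict inequality at $z$ and $s=0$ is the clean base case, namely ${\rm d}(\pi_x\circ P_s(z))/{\rm d}s\big|_{s=0}=f'(-1)\int_{0}^{3t_0}a(-1,y^+,\tau)\,{\rm d}\tau>0$ by the normalization $a(-1,y^+,3t_0/2)>0$.

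The step I expect to be the real obstacle is item \eqref{item:curve}. The flow-box coordinates $(x,y,t')$ are only $C^1$ — the $(x,y)$-chart on $\Sigma$ is built from the $C^1$ stable foliation of Lemma \ref{lem:stable} — so a genuinely $C^\infty$ vector field $Z$ has only continuous components $a,b,c$ in these coordinates and one cannot differentiate the flow-box ODE in $s$ naively. The argument therefore has to be run either at the level of the $C^r$ return map $P_s$ (differentiating $\pi_x\circ P_s$, which is only $C^1$ but that suffices) or via a comparison principle for the scalar $x$-equation with merely continuous right-hand side; and $\beta$ must be fixed small enough that the leading term $\propto\int a\,{\rm d}\tau$ dominates the error terms coming from $\Phi^{X_s}\ne\Phi^X$, from the drift of the orbit inside the flow box, and from $Z$ not being exactly a multiple of $\partial/\partial x$ — which is also where the freedom to design $Z$ (for instance keeping $a$ bounded below on the core of its support) is spent.
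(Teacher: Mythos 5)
Your proposal follows essentially the same route as the paper: items~(1)--(3) by construction, item~(4) by observing that the perturbation sits in a flow box that meets $\Sigma$ only along $\Sigma_z$, and item~(5) via the decomposition of $P_s$ into the (perturbed) flow-box crossing map to $\phi^X_{3t_0}(\Sigma_z)$ followed by the unperturbed $P|_{\Sigma_z}$, with monotonicity in $s$ coming from $a\ge0$, the cone condition from $|b|\le\beta|a|$ plus $DP$-invariance of $\cC_\alpha$, and the strict inequality at $(z,\,s=0)$ from $a(-1,y^+,3t_0/2)>0$. You are in fact a bit more explicit than the paper about two real subtleties — the flow-box coordinates being only $C^1$ so that $a,b,c$ are merely continuous, and the fact that the sign of $\mathrm{d}\pi_x\circ P_{s,1}/\mathrm{d}s$ for $s>0$ requires more than the pointwise bound $a\ge0$ because of the coupling with $y$ and with the drifting orbit — but these are caveats shared with (and left implicit in) the paper's own argument, so the approach is the same.
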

	
	\begin{proof}[Proof of the claim]
		The definitions of $Z$ and $X_s$ imply the first three items directly. The vector field $X$ on $\phi_{[0,3t_0]}^X(\Sigma_z)$ is equal to $\partial/\partial t$ everywhere. For every $w$ in the boundary of $\Sigma_z$, $Z|_{\phi_{[0,3t_0]}^X(w)}\equiv 0$, this implies $\phi_t^{X_s}(w)=\phi_t^X(w)$ for every $t\in[0,3t_0]$. Moreover, since
		$$
		\max_{(x,y,t)\in\phi_{[0,3t_0]}^X(\Sigma_z)}
		\big\{~|a(x,y,t)|~,~|b(x,y,t)|~,~|c(x,y,t)|~\big\}
		~<~\beta,
		$$
		hence taking $0<\beta<10^{-3}$ small, for every $s\in[0,1]$ and every $w\in\Sigma_z$, there exists some $t=t(w,s)$ which is close to $3t_0$, such that $\phi_t^{X_s}(w)\in\phi_{3t_0}^X(\Sigma_z)$. Thus the Poincar\'e map from $\Sigma_z$ to $\Sigma$ associated to $\phi_t^{X_s}$ is well defined for every $s\in[0,1]$.
		
		For every $w\in\Sigma\setminus\Sigma_z$, its positive orbit does not intersect $\phi_{[0,3t_0]}^X(\Sigma_z)$ before it enters $\Sigma$ again. This implies for every $s\in[0,1]$, the first return map $P_s:\Sigma\setminus l\rightarrow\Sigma$ is well-defined for the flow $\phi_t^{X_s}$, and satisfies 
		$P_s|_{\Sigma\setminus\Sigma_z}~\equiv~
		P|_{\Sigma\setminus\Sigma_z}$. This proves item~\ref{item:return map}.
		
		\vskip 2mm
		
		Let
		$$
		P_{s,1}:\Sigma_z\rightarrow\phi_{3t_0}^X(\Sigma_z),
		\qquad {\rm and} \qquad
		P_{s,2}:\phi_{3t_0}^X(\Sigma_z)\rightarrow\Sigma
		$$
		be Poincar\'e maps of $\phi_t^{X_s}$ from $\Sigma_z$ to $\phi_{3t_0}^X(\Sigma_z)$ and from $\phi_{3t_0}^X(\Sigma_z)$ to $\Sigma$ respectively. Then we have $P_s=P_{s,2}\circ P_{s,1}$. 
		Let the coordinate on $\phi_{3t_0}^X(\Sigma_z)$ be induced  by pushing the coordinate of $\Sigma_z\subset\Sigma$ by $\phi_{3t_0}^X$, then $P_{s,2}\equiv P|_{\Sigma_z}$ in this coordinate.
		
		Since $X_s\in \mathscr{X}^r(M^3)$ is depends continuously with respect to $s$ in $\mathscr{X}^r(M^3)$ and the cross section $\Sigma$ is a $C^r$ smooth surface, the curve $P_s(w):[0,1]\rightarrow\Sigma$ is $C^r$-smooth with respect to $s$ for every $w\in\Sigma_z$. 
		
		The property $a(x,y,t)\geq 0$ everywhere implies for every $w\in\Sigma_z$, ${\rm d}\pi_x\circ P_{s,1}(w)/{\rm d}s\geq0$ everywhere. Here $\pi_x$ is the projection to $x$-coordinate on $\phi_{3t_0}^X(\Sigma_z)$. Since $f'(x)\geq\lambda_0$ everywhere, we have
		$$
		\frac{{\rm d}\pi_x\circ P_s(w)}{{\rm d}s}~=~
		\frac{{\rm d}f}{{\rm d}x}\cdot
		\frac{{\rm d}\pi_x\circ P_{s,1}(w)}{{\rm d}s}
		~\geq~0.
		$$
		Moreover, since $|b(x,y,t)|\leq\beta\cdot|a(x,y,t)|\leq10^{-3}\cdot|a(x,y,t)|$ for every $(x,y,t)$, hence for every $w\in\Sigma_z$ and every $0\leq s_1<s_2\leq 1$, we have
		$$
		\left|\pi_y\circ P_{s_2,1}(w)-\pi_y\circ P_{s_1,1}(w)\right|
		~\leq~ 10^{-3}\cdot
		\left|\pi_x\circ P_{s_2,1}(w)-\pi_x\circ P_{s_1,1}(w)\right|.
		$$
		This implies ${\rm d}P_{s,1}/{\rm d}s$ is tangent to the cone field $D\phi_{3t_0}^X\left(\cC_{\alpha}|_{\Sigma_z}\right)$ everywhere on $\phi_{3t_0}^X(\Sigma_z)$. Since $P_{s,2}\equiv P|_{\Sigma_z}$, and $P$ preserves the cone field $\cC_{\alpha}$(Lemma \ref{lem:cone} and Notation \ref{notat:section}), the curve $P_s(w):[0,1]\rightarrow\Sigma$ is tangent to $\cC_{\alpha}$ everywhere.

		For the point $z=(-1,y^+)\in\Sigma_z$, since $a(-1,y^+,3t_0/2)>0$, there exists a neighborhood of $(-1,y^+,3t_0/2)$, such that for every $(x,y,t)$ in this neighborhood, we have $a(x,y,t)>0$. Thus there exists some $s'>0$ and $\kappa>0$, such that for every $0<s<s'$, we have $\pi_x\circ P_{s,1}(z)\geq-1+\kappa\cdot s$. Thus we have
		$$
		\frac{{\rm d}\pi_x\circ P_s(z)}{{\rm d}s}|_{s=0}~=~
		\frac{{\rm d}f}{{\rm d}x}|_{x=-1}\cdot
		\frac{{\rm d}\pi_x\circ P_{s,1}(w)}{{\rm d}s}|_{s=0}
		~\geq~\lambda_0\cdot\kappa>0.
		$$
		Then item~\ref{item:curve} is proved and the proof of the claim is completed.
	\end{proof}
	
	From Notation \ref{notat:section}, there exists $\lambda_0\in(\sqrt{2},2)$, such that $f'(x)>\lambda_0$ for every $x\in[-1-\epsilon,1+\epsilon]$. We fix the $C^\infty$-vector field $Z$ to satisfy Claim~\ref{clm:vector-field-family} and a constant $\lambda\in(\sqrt{2},\lambda_0)$.
	
	\begin{claim}\label{clm:small-perturb}
		There exists a constant $\tau\in(0,1)$ such that $X_s\in\cV$ for every $s\in[0,\tau]$, and satisfies the following two properties:
		\begin{enumerate}
			\item\label{item:esti-return-map} For every $s\in[0,\tau]$, the Poincar\'e return map $P_s:\Sigma\setminus l\rightarrow\Sigma$ satisfies for every $(x,y)\in\Sigma\setminus l$
			\begin{align*}
				\frac{\partial\pi_x\circ P_s(x,y)}{\partial x}>\lambda,
				\qquad &{\it and} \qquad
				\frac{\partial\pi_x\circ P_s(x,y)}{\partial y}
				<10^{-3}(\lambda-\sqrt{2}); \\
				\left|\frac{\partial\pi_y\circ P_s(x,y)}{\partial x}\right|<1,
				\qquad &{\it and} \qquad
				\left|\frac{\partial\pi_y\circ P_s(x,y)}{\partial y}\right|
				<1.
			\end{align*}
			\item\label{item:esti-curve} Let $\gamma_0:[0,\tau]\rightarrow\Sigma$ be defined as $\gamma_0(s)=P_s(z)$, then we have
			$$
			\frac{{\rm d}\pi_x\circ \gamma_0(s)}{{\rm d}s}|_{s=s'}
			~\geq~
			\frac{1}{2}\cdot
			\frac{{\rm d}\pi_x\circ \gamma_0(s)}{{\rm d}s}|_{s=0},
			\qquad \forall s'\in[0,\tau]
			$$
		\end{enumerate} 
	\end{claim}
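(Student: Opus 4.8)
The plan is to deduce Claim~\ref{clm:small-perturb} from the $s=0$ case by a soft continuity and compactness argument: $X=X_0$ and $P=P_0$ already satisfy all the required estimates, with uniform room to spare on the only region where the perturbation acts, and $P_s$ depends continuously on $s$ in the relevant (only $C^1$) topology.

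First I would isolate where the perturbation matters. By item~\ref{item:return map} of Claim~\ref{clm:vector-field-family} we have $P_s\equiv P$ on $\Sigma\setminus\Sigma_z$ for every $s\in[0,1]$, and there $P(x,y)=(f(x),H(x,y))$, so by Notation~\ref{notat:section} and Lemma~\ref{lem:big-section} the four inequalities of item~\ref{item:esti-return-map} hold automatically off $\Sigma_z$ (indeed $\partial\pi_x\circ P/\partial x=f'(x)>\lambda_0>\lambda$, $\partial\pi_x\circ P/\partial y=0$, and $|\partial\pi_y\circ P/\partial x|,|\partial\pi_y\circ P/\partial y|<1$). It then remains to treat the compact rectangle $\Sigma_z=[-1-\eta,-1+\eta]\times[y^+-\eta,y^++\eta]$, which is bounded away from the singular line $l$. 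On $\Sigma_z$ the functions $f'(x)-\lambda$, $10^{-3}(\lambda-\sqrt2)-\partial\pi_x\circ P/\partial y$, $1-|\partial\pi_y\circ P/\partial x|$ and $1-|\partial\pi_y\circ P/\partial y|$ are continuous and strictly positive (using that $f$ is independent of $y$, Notation~\ref{notat:section}, and Lemma~\ref{lem:big-section}), hence bounded below by some $\rho>0$.

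Next I would invoke the continuous dependence of $P_s$ on $s$. Since $Z\in\mathscr{X}^\infty(M^3)$ is fixed, $X_s=X+sZ\to X$ in $C^r(M^3)$ as $s\to0$ (item~\ref{item:convergence} of Claim~\ref{clm:vector-field-family}), so the time-$t$ maps $\phi_t^{X_s}$ converge to $\phi_t^{X}$ in the $C^1$-topology, uniformly for $t$ ranging over the bounded interval of return times (of order $3t_0$) defining $P_s=P_{s,2}\circ P_{s,1}$ on $\Sigma_z$. As $P_{s,2}\equiv P|_{\Sigma_z}$ is a fixed $C^1$ map, composition with it preserves $C^1$-convergence, so $P_s\to P$ in the $C^1$-topology on $\Sigma_z$. (Throughout one must work in the $C^1$-topology, because the flow-box coordinate on $\phi_{[0,3t_0]}^X(\Sigma_z)$ is only $C^1$-smooth; this is exactly the point requiring a little care.) Hence there is $\tau_1\in(0,1)$ with $\|P_s-P\|_{C^1(\Sigma_z)}<\rho$ for $s\in[0,\tau_1]$, and then item~\ref{item:esti-return-map} holds on $\Sigma_z$, hence on all of $\Sigma\setminus l$.

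For item~\ref{item:esti-curve}, by item~\ref{item:curve} of Claim~\ref{clm:vector-field-family} the curve $\gamma_0(s)=P_s(z)$ is $C^r$ in $s$, so $s\mapsto{\rm d}\pi_x\circ\gamma_0(s)/{\rm d}s$ is continuous on $[0,1]$, and its value $c_0:={\rm d}\pi_x\circ\gamma_0(s)/{\rm d}s|_{s=0}$ is strictly positive; by continuity there is $\tau_2\in(0,1)$ with ${\rm d}\pi_x\circ\gamma_0(s)/{\rm d}s|_{s=s'}\geq c_0/2$ for all $s'\in[0,\tau_2]$. Finally, since $X_s\to X$ in $C^r$ there is $\tau_3\in(0,1)$ with $X_s\in\cV$ for $s\in[0,\tau_3]$; setting $\tau=\min\{\tau_1,\tau_2,\tau_3\}$ finishes the proof. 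The only genuine obstacle is the regularity bookkeeping flagged above — reading off the estimates in the $C^1$-topology and knowing that $P_s$ depends $C^1$-continuously on $s$ over the compact perturbation region $\Sigma_z$; everything else is a direct consequence of the $s=0$ case together with continuity.
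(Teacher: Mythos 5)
Your argument is correct and follows essentially the same route as the paper: both proofs observe that $P_s\equiv P$ off the compact region $\Sigma_z$, use $C^1$-convergence $P_s\to P$ (coming from $X_s\to X$ in $C^r$) to propagate the strict $s=0$ estimates to small $s$, and handle item~\ref{item:esti-curve} by continuity of ${\rm d}\pi_x\circ\gamma_0(s)/{\rm d}s$ at $s=0$. Your write-up is merely a bit more explicit about isolating $\Sigma_z$ as the only region where work is needed and about the fact that the flow-box coordinate is only $C^1$.
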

	
	\begin{proof}[Proof of the claim]
		Since $X_s\rightarrow X$ in $\mathscr{X}^r(M^3)$ and $P_s|_{\Sigma\setminus\Sigma_z}\equiv P|_{\Sigma\setminus\Sigma_z}$, so we have $P_s\rightarrow P$ in $C^1$-topology under the coordinate $\Sigma=[-1-\epsilon,1+\epsilon]^2$. On the other hand, 
		$$
		\frac{\partial\pi_x\circ P(x,y)}{\partial x}
		\equiv f'(x)\geq\lambda_0,
		\qquad {\rm and} \qquad
		\frac{\partial\pi_x\circ P(x,y)}{\partial y}
		\equiv \frac{f(x)}{y} \equiv 0
		\qquad \forall (x,y)\in\Sigma\setminus l.
		$$
		So there exists $\tau_1>0$, such that for every $s\in[0,\tau]$, $X_s\in\cV$ whose Poincar\'e map $P_s$ satisfies 
		$\partial\pi_x\circ P_s(x,y)/\partial x>\lambda$
		and
		$\partial\pi_x\circ P_s(x,y)/\partial y<10^{-3}(\lambda-\sqrt{2})$
		for every $(x,y)\in\Sigma\setminus l$. The proof for
		$|\partial\pi_y\circ P_s(x,y)/\partial x|<1$ and $|\partial\pi_y\circ P_s(x,y)/\partial y|<1$ is the same.
		
		Since $P_s(z)$ is $C^r$-smooth with respect to $s\in[0,1]$, so the function $\pi_x\circ P_s(z)$ is $C^1$-smooth with respect to $s$. There exists $\tau_2>0$, such that for every $s'\in[0,\tau_2]$, it satisfies
		$$
		\frac{{\rm d}\pi_x\circ \gamma_0(s)}{{\rm d}s}|_{s=s'}
		~\geq~\frac{1}{2}\cdot
		\frac{{\rm d}\pi_x\circ \gamma_0(s)}{{\rm d}s}|_{s=0}
		$$
		We take $\tau=\min\{\tau_1,\tau_2\}$ finishing the proof of the claim.
	\end{proof}
	
	Fix $\tau\in(0,1)$ from Claim~\ref{clm:small-perturb}. For every $n\geq1$, let $D_n\subset[0,\tau]$ be the set consisting of points $s$ satisfying there exists $m<n$, such that $P_s^m\circ\gamma_0(s)\in l$. Then $D_n$ is a finite subset of $[0,\tau]$ with $D_n\subseteq D_{n+1}$.
	
	\begin{claim}\label{clm:1-dim-expanding}
		For every $n\geq0$, the map 
		$$
		\gamma_n(s)=\left(x_n(s),~y_n(x)\right)=P_s^n\circ\gamma_0(s):
		~[0,\tau]\setminus D_n\longrightarrow\Sigma
		$$ 
		is well defined. For each connected component $J$ of $[0,\tau]\setminus D_n$, $\gamma_n$ is a diffeomorphism from $J$ to its image, and satisfies
		$$
		\gamma_n'(s)~\in~\cC_{\alpha}(\gamma_n(s)),
		\qquad {\it and } \qquad
		\frac{{\rm d}\pi_x\circ\gamma_n(s)}{{\rm d}s}~=~x_n'(s)~\geq~\lambda^n\cdot x_0'(s),
		\qquad \forall s\in J.
		$$
	\end{claim}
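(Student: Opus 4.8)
The plan is to prove the claim by induction on $n$, carrying along three properties simultaneously: that $\gamma_n$ is well defined and $C^1$ on each component of $[0,\tau]\setminus D_n$, that its velocity $\gamma_n'(s)$ lies in the cone $\cC_\alpha(\gamma_n(s))$ with $\alpha=1/(\sqrt2-1)$, and that $x_n'(s)=\mathrm{d}(\pi_x\circ\gamma_n)/\mathrm{d}s\ge\lambda^n x_0'(s)>0$. The base case $n=0$ is exactly what the previous claims were arranged to give: $D_0=\emptyset$, the curve $\gamma_0(s)=P_s(z)$ is $C^r$ in $s$ with $\gamma_0'(s)\in\cC_\alpha(\gamma_0(s))$ for every $s$ by Claim~\ref{clm:vector-field-family}(5), and $x_0'(s)\ge\tfrac12 x_0'(0)>0$ on $[0,\tau]$ by Claim~\ref{clm:small-perturb}(2) together with $\mathrm{d}(\pi_x\circ P_s(z))/\mathrm{d}s|_{s=0}>0$; strict monotonicity of $x_0$ then makes $\gamma_0=(x_0,y_0)$ an injective immersion, hence a diffeomorphism onto its image.

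For the inductive step I would first check that $\gamma_{n+1}=P_s\circ\gamma_n$ is well defined and $C^1$ on each component of $[0,\tau]\setminus D_{n+1}$. If $s\notin D_{n+1}$ then $P_s^m\gamma_0(s)\notin l$ for all $m\le n$; in particular $s\notin D_n$, so $\gamma_n(s)$ exists, and since $\gamma_n(s)=P_s(\gamma_{n-1}(s))$ lies in the image of $P_s\subset[-1,1]\times(-1,1)\subset\Sigma$ and $\gamma_n(s)\notin l$ (the $m=n$ case), the point $P_s(\gamma_n(s))$ is defined. On a component $J$ of $[0,\tau]\setminus D_{n+1}$ (contained in a component of $[0,\tau]\setminus D_n$, where $\gamma_n$ is already under control) the map $\gamma_{n+1}$ is $C^1$ in $s$, being the composition of $(s,w)\mapsto P_s(w)$, which is $C^r$ in $s$ and $C^1$ in $w$, with the $C^1$ curve $\gamma_n$. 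The chain rule then gives
$$\gamma_{n+1}'(s)=\big[\partial_s P_s(w)\big]_{w=\gamma_n(s)}+DP_s(\gamma_n(s))\cdot\gamma_n'(s).$$

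The heart of the argument is the analysis of these two summands. From the derivative estimates of Claim~\ref{clm:small-perturb}(1) one gets that $DP_s$ preserves $\cC_\alpha$: for $v=a\,\partial_x+b\,\partial_y$ with $|b|\le\alpha|a|$, the image $DP_s(v)=a'\,\partial_x+b'\,\partial_y$ satisfies $|a'|\ge(\lambda-10^{-3}(\lambda-\sqrt2)\alpha)|a|>\sqrt2|a|$ and $|b'|\le|a|+|b|\le(1+\alpha)|a|=\sqrt2\,\alpha|a|\le\alpha|a'|$, and when $a>0$ the $x$-component of $DP_s(v)$ is $(\partial\pi_x\circ P_s/\partial x)a$ minus an error of size at most $10^{-3}(\lambda-\sqrt2)\alpha a$, which stays $\ge\lambda a$ because $\partial\pi_x\circ P_s/\partial x$ is $C^1$-close to $f'\ge\lambda_0>\lambda$ (this is exactly why the off-diagonal bound was taken as small as $10^{-3}(\lambda-\sqrt2)$). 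Applying this to $v=\gamma_n'(s)\in\cC_\alpha(\gamma_n(s))$, with $a=x_n'(s)>0$ by the inductive hypothesis, yields $DP_s(\gamma_n(s))\gamma_n'(s)\in\cC_\alpha(\gamma_{n+1}(s))$ with $x$-component $\ge\lambda x_n'(s)\ge\lambda^{n+1}x_0'(s)$. The term $[\partial_s P_s(w)]_{w=\gamma_n(s)}$ vanishes when $\gamma_n(s)\notin\Sigma_z$, since there $P_s\equiv P$ is $s$-independent by Claim~\ref{clm:vector-field-family}(4), while if $\gamma_n(s)\in\Sigma_z$ it lies in $\cC_\alpha(\gamma_{n+1}(s))$ with nonnegative $x$-component by Claim~\ref{clm:vector-field-family}(5). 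Since a sum of two vectors of $\cC_\alpha$ with nonnegative $x$-components is again in $\cC_\alpha$, it follows that $\gamma_{n+1}'(s)\in\cC_\alpha(\gamma_{n+1}(s))$ and $x_{n+1}'(s)\ge\lambda^{n+1}x_0'(s)>0$, so $x_{n+1}$ is strictly increasing on $J$ and $\gamma_{n+1}$ is a diffeomorphism onto its image there. This closes the induction.

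The step I expect to be the main obstacle is coordinating the perturbation term $\partial_s P_s$ with the cone geometry: one has to verify that $P_s$ depends on $s$ only through the flow box $\Sigma_z$, that there its $s$-variation points into $\cC_\alpha$ with the correct (nonnegative) sign of its $x$-component, and that adding it to the hyperbolic term $DP_s(\gamma_n(s))\gamma_n'(s)$ cannot leave $\cC_\alpha$ — which is precisely why the vector field $Z$ and the constants $\beta$, $\tau$, $\lambda$, $\alpha$ were chosen as in Claims~\ref{clm:vector-field-family} and~\ref{clm:small-perturb}. Everything else is the routine propagation of cone invariance and $x$-expansion along iterates of a Lorenz-type Poincar\'e map, together with the elementary observation that a $C^1$ planar curve with strictly monotone first coordinate is an embedding.
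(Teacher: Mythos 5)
Your cone-invariance argument is fine, but the $x$-expansion estimate has a genuine gap. You claim the $x$-component of $DP_s(\gamma_n(s))\gamma_n'(s)$ is at least $\lambda\,x_n'(s)$, so that a uniform one-step induction $x_{n+1}'\ge\lambda x_n'\ge\lambda^{n+1}x_0'$ closes. But Claim~\ref{clm:small-perturb}(\ref{item:esti-return-map}) gives only $\partial_x(\pi_x\circ P_s)>\lambda$ and $|\partial_y(\pi_x\circ P_s)|<10^{-3}(\lambda-\sqrt 2)$, so for $v=a\,\partial_x+b\,\partial_y\in\cC_\alpha$ with $a>0$ and $|b|\le\alpha a$, the $x$-component of $DP_sv$ is bounded below only by $\big(\lambda-10^{-3}\alpha(\lambda-\sqrt2)\big)a$. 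Since $10^{-3}\alpha<1$ this exceeds $\sqrt 2\,a$ — that is exactly the paper's Equation~(\ref{equ:x-expanding}) — but it is \emph{strictly less} than $\lambda a$, so the one-step rate you can extract is $\sqrt 2$, not $\lambda$. Your parenthetical that ``$\partial_x(\pi_x\circ P_s)$ is $C^1$-close to $f'\ge\lambda_0>\lambda$'' does not repair this: a genuine one-step rate $\ge\lambda$ would require the stronger bound $\partial_x(\pi_x\circ P_s)>\lambda+10^{-3}\alpha(\lambda-\sqrt 2)$, which Claim~\ref{clm:small-perturb} neither states nor proves; arranging it would mean re-choosing $\lambda$ relative to $\lambda_0$, which your proposal does not flag.

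The paper closes this with a two-step trick that exploits the blow-up $f'(x)\to\infty$ as $x\to 0$, and this is the idea missing from your write-up. If $\gamma_{n-1}(s_0)\notin\Sigma_z$ then $P_s\equiv P$ near that point, $\partial_y(\pi_x\circ P)=0$, and the one-step factor is exactly $f'(x_{n-1}(s_0))\ge\lambda_0>\lambda$ with no cross-term loss. If $\gamma_{n-1}(s_0)\in\Sigma_z$ — the only place the perturbation can hurt — then Claim~\ref{clm:nbhd}(\ref{item:1}) forces $\gamma_{n-2}(s_0)\in P^{-1}(\Sigma_z)\subset(0,\eta')\times[-1-\epsilon,1+\epsilon]$ where $f'>10$, so the previous step already gave $x_{n-1}'(s_0)\ge 10\,x_{n-2}'(s_0)$. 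Hence $x_n'(s_0)\ge\sqrt 2\cdot 10\cdot x_{n-2}'(s_0)>\lambda^2 x_{n-2}'(s_0)$ (as $\lambda<\lambda_0<2$), and the inductive hypothesis at level $n-2$ yields $x_n'(s_0)\ge\lambda^n x_0'(s_0)$. Without this two-step gain, the per-step rate of $\sqrt 2<\lambda$ inside $\Sigma_z$ is not enough to close the induction with the constants the paper actually fixes; the rest of your write-up (well-definedness, chain rule, cone closure under sums, monotone first coordinate implies embedding) is correct.
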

	
	\begin{proof}[Proof of the claim]
		We proof this claim by induction. When $n=0$, the item~\ref{item:curve} of Claim \ref{clm:vector-field-family} shows that $\gamma_0'(s)\in\cC_{\alpha}(\gamma_0(s))$ for every $s\in[0,\tau]$. And $x_0'(s)\geq\lambda^0 x_0'(s)$ automatically holds for every $s\in[0,\tau]$.
		Now we assume for every $0\leq i\leq n-1$ and every $s\in[0,\tau]\setminus D_i$, 
		$$
		\gamma_i'(s)\in\cC_{\alpha}(\gamma_i(s)).
		\qquad {\rm and } \qquad
		x_i'(s)~\geq~\lambda^i\cdot x_0'(s).
		$$ 
		For every $s\in[0,\tau]\setminus D_n$, we have
		$$
		x_n(s)=\pi_x\circ P_s\left(x_{n-1}(s),y_{n-1}(s)\right),
		\qquad {\rm and} \qquad
		y_n(s)=\pi_y\circ P_s\left(x_{n-1}(s),y_{n-1}(s)\right).
		$$
		
		Now we fix $s_0\in[0,\tau]\setminus D_n$.
		There are two possibilities: either $(x_{n-1}(s_0),y_{n-1}(s_0))\notin\Sigma_z$, or $(x_{n-1}(s_0),y_{n-1}(s_0))\in\Sigma_z$. If $(x_{n-1}(s_0),y_{n-1}(s_0))\notin\Sigma_z$, then there exists a neighborhood of $(x_{n-1}(s_0),y_{n-1}(s_0))$, such that $P_s\equiv P$ in this neighborhood. Lemma \ref{lem:cone} shows that 
		$$
		\gamma_n'(s_0)~=~DP(\gamma_{n-1}'(s_0))
		~\in~\cC_{\alpha}(\gamma_n(s_0)).
		$$
		
		If $(x_{n-1}(s_0),y_{n-1}(s_0))\in\Sigma_z$, item~\ref{item:1} of Claim \ref{clm:nbhd} implies $(x_{n-2}(s_0),y_{n-2}(s_0))\notin\Sigma_z$, $0<x_{n-2}(s_0)<\eta'$ and $f'(x_{n-2}(s_0))>10$. If we denote
		\begin{align*}
			\gamma_{n-2}'(s_0)~&=~
			x_{n-2}'(s_0)\cdot\frac{\partial}{\partial x}+
			y_{n-2}'(s_0)\cdot\frac{\partial}{\partial y}~\in~
			\cC_{\alpha}(\gamma_{n-2}(s_0)). \\
			\gamma_{n-1}'(s_0)~&=~
			x_{n-1}'(s_0)\cdot\frac{\partial}{\partial x}+
			y_{n-1}'(s_0)\cdot\frac{\partial}{\partial y}~\in~
			\cC_{\alpha}(\gamma_{n-1}(s_0)).
		\end{align*}
		Then $\gamma_{n-1}'(s_0)=
		\left(DP|_{(x_{n-2}(s_0),y_{n-2}(s_0))}\right) \big(\gamma_{n-2}'(s_0)\big)$. Thus we have
		\begin{align*}
			\left|y_{n-2}'(s_0)\right|\leq
			\alpha\cdot\left|x_{n-2}'(s_0)\right|,
			\qquad
			|x_{n-1}'(s_0)|=f'(x_{n-2}(s_0))\cdot|x_{n-2}'(s_0)|
			\geq 10|x_{n-2}'(s_0)|,
		\end{align*}
		and
		\begin{align*}
			|y_{n-1}'(s_0)|~&\leq~ 
			\frac{\partial H}{\partial x}|_{\gamma_{n-2}(s_0)}
			\cdot|x_{n-2}'(s_0)|~+~
			\frac{\partial H}{\partial y}|_{\gamma_{n-2}(s_0)}
			\cdot|y_{n-2}'(s_0)| \\
			&\leq~ |x_{n-2}'(s_0)|~+~|y_{n-2}'(s_0)| \\
			&\leq~(1+\alpha)|x_{n-2}'(s_0)|.
		\end{align*}
		
		Now we calculate $\gamma_n'(s_0)$. Since 
		$x_n(s)=\pi_x\circ P_s\left(x_{n-1}(s),y_{n-1}(s)\right)$
		and
		$y_n(s)=\pi_y\circ P_s\left(x_{n-1}(s),y_{n-1}(s)\right)$, 
		if we denote $w_{n-1}=\left(x_{n-1}(s_0),y_{n-1}(s_0)\right)$, then 
		\begin{align*}
			x_n'(s_0)=
			\frac{\partial \pi_x\circ P_{s_0}}{\partial x}
			\cdot x_{n-1}'(s_0) +
			\frac{\partial \pi_x\circ P_{s_0}}{\partial y}
			\cdot y_{n-1}'(s_0) +
			\frac{\partial \pi_x\circ P_s(w_{n-1})}{\partial s}
			|_{s=s_0}.
		\end{align*}     
		From item~\ref{item:esti-return-map} of Claim \ref{clm:small-perturb}, we have
		\begin{align*}
			\frac{\partial \pi_x\circ P_{s_0}}{\partial x}
			\cdot x_{n-1}'(s_0)
			\geq \lambda\cdot x_{n-1}'(s_0) 
			\quad {\rm and} \quad   
			\left|\frac{\partial \pi_x\circ P_{s_0}}{\partial y}
			\cdot y_{n-1}'(s_0)\right|
			\leq 10^{-3}(\lambda-\sqrt{2})\cdot|y_{n-1}'(s_0)|.
		\end{align*}     
		Since $|y_{n-1}'(s_0)|\leq\alpha\cdot x_{n-1}'(s_0)$ and $10^{-3}\alpha<1$, we have
		\begin{align}\label{equ:x-expanding}
			x_n'(s_0)\geq 
			\sqrt{2}\cdot x_{n-1}'(s_0) + \frac{\partial \pi_x\circ P_s(w_{n-1})}{\partial s}
			|_{s=s_0}
		\end{align}
		
		On the other hand, we have
		$$
		y_n'(s_0)=\frac{\partial \pi_y\circ P_{s_0}}{\partial x}
		\cdot x_{n-1}'(s_0) +
		\frac{\partial \pi_y\circ P_{s_0}}{\partial y}
		\cdot y_{n-1}'(s_0) +
		\frac{\partial \pi_y\circ P_s(w_{n-1})}{\partial s}
		|_{s=s_0}.
		$$
		Again, item~\ref{item:esti-return-map} of Claim \ref{clm:small-perturb} shows that $|\partial \pi_y\circ P_{s_0}/\partial x|<1$ and $|\partial \pi_y\circ P_{s_0}/\partial y|<1$. Thus we have
		\begin{align}\label{equ:y-small}
			|y_n'(s)|<(1+\alpha)\cdot x_{n-1}'(s_0)+
			\left|\frac{\partial \pi_y\circ P_s(w_{n-1})}{\partial s}
			|_{s=s_0}\right|.
		\end{align}
		
		Item~\ref{item:curve} of Claim \ref{clm:vector-field-family} shows
		that 
		$$
		\frac{{\rm d}P_s(w_{n-1})}{{\rm d}s}|_{s=s_0}\in
		\cC_{\alpha}(P_{s_0}(w))
		\qquad {\rm and} \qquad
		\frac{\partial \pi_x\circ P_s(w_{n-1})}{\partial s}|_{s=s_0}\geq 0,
		$$
		Which implies
		$$
		\left|\frac{\partial \pi_y\circ P_s(w_{n-1})}{\partial s}|_{s=s_0}\right|
		~\leq~\alpha\cdot
		\frac{\partial \pi_x\circ P_s(w_{n-1})}{\partial s}|_{s=s_0}.
		$$
		Since $\alpha=1/(\sqrt{2}-1)$ which is equivalent to $(1+\alpha)/\sqrt{2}=\alpha$, Equation \ref{equ:x-expanding} and Equation \ref{equ:y-small} imply
		$$
		|y_n'(s_0)|~\leq~ \alpha\cdot x_n'(s_0),
		\qquad {\it i.e.} \qquad
		\gamma_n'(s_0)\in\cC_{\alpha}(\gamma_n(s_0)).
		$$
		
		Finally, Equation \ref{equ:x-expanding} shows that
		$$
		x_n'(s_0)~\geq~\sqrt{2}\cdot x_{n-1}'(s_0)
		~\geq~ 10\sqrt{2}\cdot x_{n-2}'(s_0)
		~>~ \lambda^2\cdot\lambda^{n-2}
		~=~ \lambda^n.
		$$
		This finishes the proof of the claim.    	
	\end{proof}
	
	We can finish the proof of this theorem by using the idea of Lemma \ref{lem:evenually-onto}. 
	
	\begin{claim}\label{clm:cover}
		There exists some integer $N>0$, such that 
		$$
		(-1,1)~\subseteq~
		\bigcup_{i=0}^N\pi_x\circ\gamma_n\big([0,\tau]\big).
		$$
	\end{claim}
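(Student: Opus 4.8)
The plan is to run the ``eventually onto'' argument of Lemma~\ref{lem:evenually-onto}, with the one-dimensional expanding map $f$ replaced by the family of first coordinates $x_n(s)=\pi_x\circ\gamma_n(s)$ of the curves produced by Claim~\ref{clm:1-dim-expanding}. Two inputs make this work. Input~(i): uniform expansion of $x_n$. By Claims~\ref{clm:vector-field-family},~\ref{clm:small-perturb} and~\ref{clm:1-dim-expanding}, on each connected component $J$ of $[0,\tau]\setminus D_n$ the function $x_n$ is a $C^1$-diffeomorphism onto its image with $x_n'(s)\geq\lambda^n x_0'(s)\geq\frac12\lambda^n x_0'(0)$ and $x_0'(0)>0$. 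Input~(ii): the ``jump'' of $\pi_x\circ P_s$ across $l$: as a point of $\Sigma\setminus l$ with $x$-coordinate tending to $0^+$ (resp.\ $0^-$) approaches $l$, its image under $P_s$ tends to $z^+=(-1,y^+)$ (resp.\ $z^-=(1,y^-)$), so $\pi_x\circ P_s$ tends to $-1$ (resp.\ $1$). This is the analogue of $\lim_{x\to0^\pm}f(x)=\mp1$, and it holds for every $s\in[0,\tau]$ because $\Lambda_{X_s}$ is again a geometric Lorenz attractor (Proposition~\ref{prop:Lorenz}) and the perturbation, being supported in $\phi^X_{[t_0,2t_0]}(\Sigma_z)\subset B(z,\delta)$ which is disjoint from $\Sigma$, from $W^s_{\it loc}(\sigma)$ and from the negative orbit of $z$, leaves both branches $z^\pm$ of $W^u(\sigma)$ and the local picture near $l$ unchanged. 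Note also that $\gamma_n(s)\in\overline{P_s(\Sigma\setminus l)}\subseteq[-1,1]\times(-1,1)$ for $n\geq1$, so $x_n(s)\in[-1,1]$.

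First I would establish the counting step, the exact analogue of Claim~\ref{clm:counting}: if $I=(a,b)$ is a connected component of $[0,\tau]\setminus D_n$ with $n\geq1$, then either $\#(I\cap D_{n+2})\leq1$, or $(-1,1)\subseteq\pi_x\gamma_{n+1}(I)\cup\pi_x\gamma_{n+2}(I)$. The argument transcribes that of Claim~\ref{clm:counting}: write $(c,d)=x_n(I)\subseteq[-1,1]$; if $0\notin(c,d)$, then $\gamma_{n+1}$ is well defined on $I$ and $x_{n+1}$ is monotone there, so it vanishes at most once and $\#(I\cap D_{n+2})\leq1$; if $0\in(c,d)$, let $s_0\in I$ be the unique zero of $x_n$, and split $I$ into the two components $(a,s_0)$ and $(s_0,b)$ of $[0,\tau]\setminus D_{n+1}$. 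By input~(ii), on $(a,s_0)$ we have $x_n\uparrow0^-$ so $x_{n+1}\to1$ at $s_0$, and on $(s_0,b)$ we have $x_n\downarrow0^+$ so $x_{n+1}\to-1$ at $s_0$. Exactly as in Claim~\ref{clm:counting}, if neither of the two pieces has $x_{n+1}$-image containing $0$ then only one further discontinuity arises; otherwise $\pi_x\gamma_{n+1}(I)$ contains $[0,1)$ or $(-1,0]$, and one more application of $P_s$ (again via input~(ii)) yields $(-1,1)\subseteq\pi_x\gamma_{n+1}(I)\cup\pi_x\gamma_{n+2}(I)$.

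Then I would conclude exactly as at the end of Lemma~\ref{lem:evenually-onto}. If the good alternative of the counting step never occurs, iterating the counting step gives $\#D_{2n}\leq2^n-1$, so $[0,\tau]\setminus D_{2n}$ has at most $2^n$ connected components, and hence one of them, $I_{2n}$, has length $|I_{2n}|\geq2^{-n}\tau$. By input~(i),
\[
\bigl|x_{2n}(I_{2n})\bigr|\ \geq\ \tfrac12\lambda^{2n}x_0'(0)\,|I_{2n}|\ \geq\ \tfrac12 x_0'(0)\,\tau\cdot\bigl(\lambda^2/2\bigr)^n\ \longrightarrow\ +\infty
\]
as $n\to\infty$, since $\lambda>\sqrt2$; this contradicts $x_{2n}(I_{2n})\subseteq[-1,1]$. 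Therefore the good alternative holds for some $n$, and taking $N=n+2$ finishes the proof.

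The step I expect to require the most care is input~(ii): verifying that the boundary behaviour of $\pi_x\circ P_s$ across $l$ is genuinely unchanged, simultaneously for all $s\in[0,\tau]$ — that is, that the perturbation neither destroys the geometric Lorenz structure (handled by Proposition~\ref{prop:Lorenz} together with $X_s\in\cV$) nor displaces the heteroclinic branches $z^\pm$ (handled by the location of $\supp(Z)$ relative to $\Sigma$, $W^s_{\it loc}(\sigma)$ and the negative orbit of $z$, as recorded in Claims~\ref{clm:nbhd} and~\ref{clm:vector-field-family}). Once this is in place, the remainder is a routine transcription of Lemma~\ref{lem:evenually-onto}; the only genuinely new feature is that we track the $\pi_x$-coordinate of a curve in the two-dimensional section $\Sigma$ rather than iterating a map of an interval, and the cone-invariance in Claim~\ref{clm:1-dim-expanding} guarantees that the transverse ($y$) coordinate plays no role in the covering argument.
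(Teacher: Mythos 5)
Your proposal is correct and follows the paper's own proof essentially step for step: the same counting dichotomy (the analogue of Claim~\ref{clm:counting}, driven by the jump of $\pi_x\circ P_s$ across $l$, which is unchanged because $P_s|_{\Sigma\setminus\Sigma_z}\equiv P|_{\Sigma\setminus\Sigma_z}$), the same recursion $\#D_{2n}\leq 2^n-1$, and the same contradiction from $|x_{2n}(I_{2n})|\geq\tfrac12 x_0'(0)\,\tau\,(\lambda^2/2)^n\to\infty$. The only cosmetic difference is that you package the two inputs (uniform expansion from Claims~\ref{clm:1-dim-expanding} and~\ref{clm:small-perturb}, and the boundary jump from Claim~\ref{clm:vector-field-family}) up front, whereas the paper inlines them.
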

	
	\begin{proof}[Proof of the claim]
		If $(a,b)\subseteq[0,\tau]$ is a connected component of $[0,\tau]\setminus D_n$, then 
		$$
		\pi_x\circ\gamma_n(s)=\pi_x\circ P_s^n\circ\gamma_0(s):
		~[0,\tau]\rightarrow[-1,1]
		$$ 
		is a diffeomorphism from $(a,b)$ to its image. 
		
		Similar to Claim \ref{clm:counting}, we show that
		for every $n\geq1$, let $I=(a,b)$ be a connected component of 
		$[0,\tau]\setminus D_n$, then either $\#\big(I\cap D_{n+2}\big)\leq1$ or 
		$(-1,1)\subseteq \pi_x\circ\gamma_{n+1}(I)\cup \pi_x\circ\gamma_{n+2}(I)$. 
		
		Actually, let $(c,d)=\pi_x\circ\gamma_n(I)$. If $0\notin(c,d)$, then $I\cap D_{n+1}=\emptyset$ and 
		$$
		\pi_x\circ\gamma_{n+1}(I)=
		\pi_x\circ P_s^{n+1}\circ\gamma_0(I)
		$$ 
		is an open interval in $(-1,1)$. Thus there exists at most one point $s_0\in I$ satisfying $\pi_x\circ\gamma_{n+1}(s_0)=0$. This implies $\#\big(I\cap D_{n+2}\big)\leq1$.
		
		If $0\in(c,d)$, then we denote $s_0\in I$ satisfying $\pi_x\circ\gamma_n(s_0)=\pi_x\circ P_{s_0}^n\circ\gamma_0(s_0)=0$. Then two open intervals $(a,s_0)$ and $(s_0,b)$ are two connected components of $I\setminus D_{n+1}$. Moreover, item~\ref{item:return map} of Claim \ref{clm:vector-field-family} shows that 
		$P_s|_{\Sigma\setminus\Sigma_z}\equiv
		P|_{\Sigma\setminus\Sigma_z}$. This implies
		$$
		\lim_{s\rightarrow s_0^-}\pi_x\circ\gamma_{n+1}(s)=1,
		\qquad {\rm and} \qquad
		\lim_{s\rightarrow s_0^+}\pi_x\circ\gamma_{n+1}(s)=-1.	
		$$
		Thus there exists $e,f\in(-1,1)$, such that
		$$
		\pi_x\circ\gamma_{n+1}\big( (a,s_0) \big)=(e,1),
		\qquad {\rm and} \qquad 
		\pi_x\circ\gamma_{n+1}\big( (s_0,b) \big)=(-1,f).
		$$
		If both of them don't contain $0$, then we have $\#\big(I\cap D_{n+2}\big)=\#\big(I\cap D_{n+1}\big)=1$.
		
		Otherwise, we have either  $0\in(e,1)$ or $0\in(-1,f)$. Thus either 
		$$
		[0,1)\subset \pi_x\circ\gamma_{n+1}\big( (a,s_0) \big)
		\subset \pi_x\circ\gamma_{n+1}(I),
		\qquad {\rm or} \qquad
		(-1,0]\subset \pi_x\circ\gamma_{n+1}\big( (s_0,b) \big)
		\subset \pi_x\circ\gamma_{n+1}(I).
		$$
		In both cases, we can apply $P_s|_{\Sigma\setminus\Sigma_z}\equiv
		P|_{\Sigma\setminus\Sigma_z}$ and show that 
		$$
		(-1,1)\subseteq \pi_x\circ\gamma_{n+1}(I)\cup \pi_x\circ\gamma_{n+2}(I).
		$$
		
		Now we claim there exists some $n>0$ and a connected component $I$ of $[0,\tau]\setminus D_n$, such that $(-1,1)\subseteq \pi_x\circ\gamma_{n+1}(I)\cup \pi_x\circ\gamma_{n+2}(I)$. Otherwise, we have
		$$
		\# D_2\leq1, \qquad 
		\# D_4\leq\# D_2+\# D_2+1\leq 3,
		\qquad \cdots \qquad 
		\# D_{2n}\leq \# D_{2n-2}+\# D_{2n-2}+1\leq 2^n-1
		$$
		This implies $[0,\tau]\setminus D_{2n}$ has at most $2^n$ connected components. So there exists a connected component $I_{2n}\subset [0,\tau]\setminus D_{2n}$, such that its length satisfies $|I_{2n}|\geq 2^{-n}\tau$.
		
		Since $\pi_x\circ\gamma_{2n}$ is a diffeomorphism on each connected component of $[0,\tau]\setminus D_{2n}$, and Claim \ref{clm:1-dim-expanding} shows that
		$$
		\frac{{\rm d}\pi_x\circ\gamma_n(s)}{{\rm d}s}~=~x_n'(s)~\geq~\lambda^n\cdot x_0'(s),
		\qquad \forall s\in I_{2n},
		$$
		the length of the interval $\pi_x\circ\gamma_n(I_{2n})$ satisfies
		$|\pi_x\circ\gamma_n(I_{2n})| 
		\geq \lambda^{2n}\cdot|\gamma_0(I_{2n})|$.
		
		Finally,  item~\ref{item:esti-curve} of Claim \ref{clm:small-perturb} shows that 
		$$
		|\gamma_0(I_{2n})|~\geq~\frac{1}{2}\cdot
		\frac{{\rm d}\pi_x\circ \gamma_0(s)}{{\rm d}s}|_{s=0}
		\cdot|I_{2n}|
		~\geq\frac{1}{2}\cdot
		\frac{{\rm d}\pi_x\circ \gamma_0(s)}{{\rm d}s}|_{s=0}\cdot 2^{-n}\tau.
		$$
		Thus we have
		$$
		|\pi_x\circ\gamma_n(I_{2n})|~\geq~
		\frac{\tau}{2}\cdot
		\frac{{\rm d}\pi_x\circ \gamma_0(s)}{{\rm d}s}|_{s=0}
		\cdot\left(\frac{\lambda^2}{2}\right)^n
		~\longrightarrow~+\infty,
		\qquad {\rm as} \quad n\rightarrow+\infty.
		$$
		This is absurd because $\pi_x\circ\gamma_n(I_{2n})\subseteq(-1,1)$. So there exists $n>0$ and a connected component $I$ of $[0,\tau]\setminus D_n$, such that $(-1,1)\subseteq \pi_x\circ\gamma_{n+1}(I)\cup \pi_x\circ\gamma_{n+2}(I)$. We take $N=n+2$ to complete the proof of this claim.
	\end{proof}
	
	For the critical point $p\in\Lambda$, recall that $l_p=\{x_p\}\times[-1-\epsilon,1+\epsilon]\subset\Sigma$  with $x_p\in(-1,1)$ (if $p=\sigma$, then $x_p=0$). Claim \ref{clm:cover} shows there exist $s\in[0,\tau]$ and $n>0$, such that 
	$$
	x_p~=~\pi_x\circ P_s^n\circ\gamma_0(s)
	~=~\pi_x\circ P_s^n\circ P_s(z).
	$$
	This implies for the vector field $Y=X_s\in\cV$, there exists $T>0$, such that $\phi_T^Y(z)\in l_p$.
	
	Since $X_s|_{M^3\setminus B(z,\delta)}\equiv X|_{M^3\setminus B(z,\delta)}$ and 
	$B(z,\delta)\cap
	\left(\bigcup_{t\geq0}\phi_t^X(l_p)\cup\{\sigma\}\right)
	=\emptyset$, we have
	$l_p\subset W^s_{\it loc}(\orb(p),\phi_t^Y)$. 
	The fact $z\in W^u(\sigma,\phi_t^Y)=W^u(\sigma,\phi_t^{X_s})$ implies 
	$$
	\orb(z,\phi_t^Y)~\subset~
	W^u(\sigma,\phi_t^Y) \cap W^s(\orb(p),\phi_t^Y).
	$$
	This completes the proof of this theorem.	
\end{proof}

\section{The space of ergodic measures for Lorenz attractors}\label{Section:measure}
In this section, we aim to prove Theorem~\ref{Thm:B} and Corollary~\ref{Cor:Cr-entropy-support}. 
We would also obtain properties concerning path connectedness and denseness of periodic measures for ergodic measure space for more general invariant sets.

\subsection{Path connectedness}

This subsection devotes to give a general criterion for path connectedness (or arcwise connectedness) of  the ergodic measure space. More precisely, for a  homoclinic class $\Lambda$ of a vector field $X\in\mathscr{X}^1(M)$, if all periodic orbits  are homoclinically related with each other, then the set of ergodic measures supported on $\Lambda$ which can be approximated by periodic measures is path connected.  A related conclusion for diffeomorphisms is proved in~\cite{Go-Pe}.

\begin{proposition}\label{Prop:connectedness}
	Let $X\in\mathscr{X}^1(M)$ and $\Lambda$ be a  homoclinic class of $X$ such that all periodic points in $\Lambda$  are hyperbolic and any two periodic points in $\Lambda$ are homoclinically related. 
	Then the set $\overline{\mathcal{M}_{per}(\Lambda)}\cap\mathcal{M}_{erg}(\Lambda)$ is path connected.
\end{proposition}

The proof of Proposition~\ref{Prop:connectedness} follows the idea of ~\cite[Theorem 1.1, 1.2]{Go-Pe}, before which we need some preparations. 
The first one states that any ergodic measure supported on a singular hyperbolic homoclinic class can be accumulated by periodic measures if it is not supported on singularities.

\begin{proposition}\label{Prop:dense periodic measure}
	Let $X\in\mathscr{X}^1(M)$ and $\Lambda$ be a singular hyperbolic homoclinic class. Assume $\mu\in\mathcal{M}_{erg}(\Lambda)$ is not an atomic measure supported on any singularity. Then $\mu\in\overline{\mathcal{M}_{per}(\Lambda)}$.
\end{proposition}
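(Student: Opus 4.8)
The plan is to reduce the statement to the non-singular case handled in~\cite{sgw} via Liao's shadowing machinery, using the dominated splitting for the rescaled linear Poincar\'e flow supplied by Lemma~\ref{Lem:domination for measure}. Since $\mu$ is ergodic and not the atomic measure on a singularity, by Poincar\'e recurrence we may assume $\mu(\sing(X))=0$, so $\mu$ is carried by $\Lambda\setminus\sing(\Lambda)$; moreover $\ind(\mu)=\dim(E^{ss})$. Lemma~\ref{Lem:domination for measure} then gives a dominated splitting $\mathcal{N}_{\supp(\mu)\setminus\sing(\Lambda)}=\mathcal{E}\oplus\mathcal{F}$ together with constants $\eta,T>0$ such that
\[
\int\log\|\psi^*_T|_{\mathcal{E}_x}\|\,d\mu(x)<-\eta,
\qquad
\int\log m(\psi^*_{T}|_{\mathcal{F}_x})\,d\mu(x)>\eta.
\]

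First I would apply Birkhoff's ergodic theorem to the two integrands above: for $\mu$-a.e. $x$, the Birkhoff averages of $\log\|\psi^*_T|_{\mathcal{E}}\|$ along the $\phi_{kT}$-orbit converge to a negative number, and likewise the averages of $\log m(\psi^*_T|_{\mathcal{F}})$ converge to a positive number. Fixing such a generic point $x$ whose empirical measures along $\phi_{[0,n]}(x)$ converge to $\mu$ (again by ergodicity), a Pliss-type argument produces, for a suitable slightly smaller $\eta'>0$, arbitrarily long orbit segments $\phi_{[0,T_n]}(x_n)$ — with $x_n$ on the orbit of $x$ and $T_n\to\infty$ — that are $(\eta',T_0)^*$-quasi hyperbolic with respect to $\mathcal{N}=\mathcal{E}\oplus\mathcal{F}$ in the sense of Definition~\ref{Def:quasi-hyperbolic segment}, and whose empirical measures still converge to $\mu$. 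By compactness of the relevant Grassmann bundles over $\supp(\mu)\setminus\sing(\Lambda)$, after passing to a subsequence we may also arrange that the endpoints nearly return: $d(\mathcal{E}_{x_n},\psi_{T_n}(\mathcal{E}_{x_n}))\to 0$, and that $d(x_n,\phi_{T_n}(x_n))\to 0$ as well (here one uses that the orbit of $x$ is dense in $\supp(\mu)$, or a standard closing-up trick choosing near returns).

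Next I would feed these segments into Liao's shadowing lemma, Theorem~\ref{Thm:Liao-shadowing}, applied with the compact set $\Lambda$ replaced by a small compact neighborhood of $\supp(\mu)$ inside $M\setminus\sing(X)$ (shrinking the segments away from the singularity if necessary, which is harmless since $\mu(\sing(X))=0$). For $\varepsilon_n\to 0$ this yields genuine periodic points $p_n$ of $X$ with periods $\theta_n(T_n)\approx T_n$ such that $d(\phi_t(x_n),\phi_{\theta_n(t)}(p_n))<\varepsilon_n|X(\phi_t(x_n))|$ for all $t\in[0,T_n]$. Since the orbit segments of $x_n$ stay in $\Lambda$ at bounded distance from $\sing(\Lambda)$, the norm $|X(\phi_t(x_n))|$ is bounded, so the shadowing is uniformly $\varepsilon_n$-close in the ordinary metric; consequently the periodic orbits $\orb(p_n)$ converge in the Hausdorff topology into $\Lambda$. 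Because $\Lambda$ is a homoclinic class, hence the maximal invariant set in a neighborhood of itself (isolated — or one uses that $\Lambda$ is an attractor / invokes the robustness of singular hyperbolicity to place $\orb(p_n)$ inside $\Lambda$), we get $p_n\in\Lambda$ for $n$ large, so the periodic measures $\mu_{p_n}:=\frac1{\theta_n(T_n)}\int_0^{\theta_n(T_n)}\delta_{\phi_t(p_n)}\,dt$ lie in $\mathcal{M}_{per}(\Lambda)$. Finally, since the shadowing orbit of $p_n$ tracks $\phi_{[0,T_n]}(x_n)$ with a time-reparametrization $\theta_n$ satisfying $|\theta_n'-1|<\varepsilon_n$, a routine estimate comparing $\mu_{p_n}$ with the empirical measure of $\phi_{[0,T_n]}(x_n)$ shows $\mu_{p_n}\to\mu$ in the weak*-topology, giving $\mu\in\overline{\mathcal{M}_{per}(\Lambda)}$.

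The main obstacle is the passage from the two integral inequalities of Lemma~\ref{Lem:domination for measure} to long quasi-hyperbolic segments \emph{that also close up}, i.e. simultaneously controlling (i) the multiplicative Pliss estimates on $\mathcal{E}$ and on $\mathcal{F}$ along the same segment, (ii) the near-return of the bundle $\mathcal{E}$ to apply Theorem~\ref{Thm:Liao-shadowing}, and (iii) the convergence of empirical measures to $\mu$; the first two are somewhat in tension with the third because forcing a near-return may distort the empirical measure. The resolution is standard: by ergodicity the set of times $n$ for which $\phi_{nT}(x)$ lies in a small ball around a fixed $\mu$-generic point has positive density, so one can select return times along which all three requirements hold, exactly as in the proof of~\cite[Theorem 5.6]{sgw}; I would invoke that argument rather than reproduce it. The role of $\Lambda$ being a homoclinic class (as opposed to an arbitrary singular hyperbolic set) enters only to guarantee $\orb(p_n)\subset\Lambda$, which is where the hypothesis is genuinely used.
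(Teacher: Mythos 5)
Your proposal follows essentially the same route as the paper's proof (Lemma~\ref{Lem:domination for measure} $\to$ Katok/Pliss block selection $\to$ Liao's $*$-shadowing $\to$ periodic orbits whose empirical measures converge to $\mu$), and you correctly isolate the one point where the homoclinic-class hypothesis must enter, namely placing $\orb(p_n)$ inside $\Lambda$. However, your justification of that very step is wrong, and this is a genuine gap.

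You write that $p_n\in\Lambda$ because ``$\Lambda$ is a homoclinic class, hence the maximal invariant set in a neighborhood of itself (isolated — or one uses that $\Lambda$ is an attractor / invokes the robustness of singular hyperbolicity to place $\orb(p_n)$ inside $\Lambda$).'' None of these three alternatives is available: the proposition assumes $\Lambda$ is a singular hyperbolic homoclinic class, \emph{not} that it is isolated, locally maximal, or an attractor; homoclinic classes in general are not isolated, and robustness of singular hyperbolicity says nothing about an orbit of the \emph{same} vector field near $\Lambda$ being contained in $\Lambda$. Liao's shadowing lemma (Theorem~\ref{Thm:Liao-shadowing}) only produces a periodic orbit of $X$ that is $\varepsilon$-close to $\Lambda$, and without local maximality that does not force $\orb(p_n)\subseteq\Lambda$.

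The correct mechanism — and the one the paper actually uses — is quantitative hyperbolicity of the shadowing periodic orbit, not topological isolation. Because the shadowed segment is $(\eta,T_0)^*$-quasi hyperbolic, the periodic point $p_n$ produced by Theorem~\ref{Thm:Liao-shadowing} is an $(\eta/2,T_0)$-bi-Pliss point (Definition~\ref{Def:Pliss}). Theorem~\ref{Thm:uniform inv-manifold} then gives local stable and unstable manifolds of $\orb(p_n)$ of \emph{uniform} size, independent of $n$. Since $\orb(p_n)$ converges in the Hausdorff metric into $\Lambda=H(p)$, where periodic orbits homoclinically related to $p$ are dense, these uniform-size invariant manifolds eventually produce transverse homoclinic intersections with $W^{s/u}(\orb(p))$ (or with another periodic orbit already known to be homoclinically related to $p$), so $\orb(p_n)$ is homoclinically related to $\orb(p)$ and hence $\orb(p_n)\subset H(p)=\Lambda$. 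You should replace the parenthetical appeal to isolation/attraction with this bi-Pliss + uniform invariant manifold argument; as stated, your proof would only apply to isolated homoclinic classes or attractors, which is a strictly weaker result than what is claimed.
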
 %\marginpar{\color{red} Comment 4\&5}

\begin{remark}
	The proof of Proposition \ref{Prop:dense periodic measure} originates from~\cite[Proposition 1.4]{C-PH} following Katok's arguments~\cite{katok} for the case of diffeomorphisms while for vector fields, the arguments essentially follows~\cite[Theorem 5.6]{sgw} by applying Liao's shadowing lemma~\cite{liao-shadowing1,liao-shadowing2}. Thus we omit the proof.
\end{remark}

	The following two lemmas  are the version of~\cite[Lemma 3.1, 3.2]{Go-Pe} for vector fields. Given a periodic point $p$, we denote by $\delta_{\orb(p)}$ the periodic measures corresponding to $\orb(p)$. The first lemma states that any two periodic measures can be connected through a path inside $\overline{\mathcal{M}_{per}(\Lambda)}\cap\mathcal{M}_{erg}(\Lambda)$.
	
	\begin{lemma}\label{Lem:connect per-measure}
		Under hypothesis of Proposition~\ref{Prop:connectedness}, for any two periodic points $p,q\in\Lambda$, there exits a continuous path $\{\nu_t\}_{t\in[0,1]}\subset \overline{\mathcal{M}_{per}(\Lambda)}\cap\mathcal{M}_{erg}(\Lambda)$ such that $\nu_0=\delta_{\orb(p)}$ and $\nu_1=\delta_{\orb(q)}$.
	\end{lemma}
	\begin{proof}
		Since the two hyperbolic periodic orbits $\orb(p)$ and $\orb(q)$ are homoclinically related,  there exists a hyperbolic horseshoe $K\subset \Lambda$ of $X$ containing $p$ and $q$. Note here that the hypberolic horseshoe $K$ of $X$ is the suspension of a hyperbolic horseshoe $\tilde{K}$ for a diffeomorphism (the time one map of the flow) with a continuous roof function. By~\cite[Theorem B]{Sigmund}, the space  $\mathcal{M}_{erg}(\tilde{K})$ is path connected. On the other hand, there is a one-to-one correspondence between $\mathcal{M}_{erg}(\tilde{K})$ and $\mathcal{M}_{erg}(K)$, see for instance~\cite[Chapter 6]{parry-pollicott}. Hence $\mathcal{M}_{erg}(K)$ is path connected. Moreover, $\mathcal{M}_{per}(K)$ is dense in $\mathcal{M}_{erg}(K)$ since $K$ is a hyperbolic horseshoe. Thus $\delta_{\orb(p)}$ and $\delta_{\orb(q)}$ can be connected by a path contained in $\mathcal{M}_{erg}(K)\subset\overline{\mathcal{M}_{per}(\Lambda)}\cap\mathcal{M}_{erg}(\Lambda)$.
	\end{proof}
	
	Next lemma states that if two periodic measures are both close to a same ergodic measure $\mu$, then there is a short path in the ergodic measure space connecting them in the sense that the whole path is close to $\mu$.
	\begin{lemma}\label{Lem:short path}
		Under hypothesis of Proposition~\ref{Prop:connectedness}, for any $\mu\in\mathcal{M}_{erg}(\Lambda)$ and $\varepsilon>0$, there exists $\eta>0$ such that if two periodic points $p,q\in\Lambda$ satisfies $d(\mu,\delta_{\orb(p)})<\eta$ and $d(\mu,\delta_{\orb(q)})<\eta$, then there exists a path $\{\nu_t\}_{t\in[0,1]}\subset \overline{\mathcal{M}_{per}(\Lambda)}\cap\mathcal{M}_{erg}(\Lambda)$ such that $\nu_0=\delta_{\orb(p)}$, $\nu_1=\delta_{\orb(q)}$ and $d(\mu,\nu_t)<\varepsilon$ for any $t\in[0,1]$.
	\end{lemma}
	\begin{proof}
		Since $\orb(p)$ and $\orb(q)$ are homoclinically related, there exist  hyperbolic horseshoes such that every point in which spends arbitrarily large fractions of its orbit shadowing $\orb(p)$ and $\orb(q)$ as closely as we want.
		Thus we can choose a hyperbolic horseshoe $K$ containing $p$ and $q$ such that any ergodic measure supported on $K$ is $\eta$-close to $\delta_{\orb(p)}$ or $\delta_{\orb(q)}$. As a consequence, the horseshoe $K$ is $2\eta$-close to $\mu$ in the measure sense: any ergodic measure supported on $K$ is contained in the $2\eta$-neighborhood of $\mu$. By~\cite[Theorem B]{Sigmund}, there is a continuous path $\{\nu_t\}_{t\in[0,1]}\subset\mathcal{M}_{erg}(K)\subset \overline{\mathcal{M}_{per}(\Lambda)}\cap\mathcal{M}_{erg}(\Lambda)$ connecting $\delta_{\orb(p)}$ and $\delta_{\orb(q)}$, satisfying $d(\mu,\nu_t)<2\eta$ for any $t\in[0,1]$. %\textcolor{blue}{a detailed explanation can be found in~\cite[Page 24, Proof of Theorem 3.5, part (c)]{abc}}. 
		Then Lemma~\ref{Lem:short path} concludes if we take $\eta=\varepsilon/2$.
	\end{proof}

Now we could complete the proof of Proposition~\ref{Prop:connectedness}.

\begin{proof}[Proof of Proposition~\ref{Prop:connectedness}]	
	Take two ergodic measures $\mu,\nu\in\overline{\mathcal{M}_{per}(\Lambda)}\cap\mathcal{M}_{erg}(\Lambda)$. We only need to show there exists a path $\{\mu_t\}_{t\in[0,1]}\subset \overline{\mathcal{M}_{per}(\Lambda)}\cap\mathcal{M}_{erg}(\Lambda)$ such that $\mu_0=\mu$ and $\mu_1=\nu$. We follow the idea of~\cite[Theorem 1.1]{Go-Pe}. 
	By assumption, there exist two sequences of periodic points $\{p_n\}$ and $\{q_n\}$ in $\Lambda$ such that $\delta_{\orb(p_n)}\rightarrow\mu$ and  $\delta_{\orb(q_n)}\rightarrow\nu$ in the weak*-topology.
	Taking subsequences if necessary, we assume 
	\[d(\delta_{\orb(p_n)},\mu)<\frac{1}{2\cdot3^n} \text{~~and~~} d(\delta_{\orb(q_n)},\nu)<\frac{1}{2\cdot3^n}, \text{~~for all $n\geq 1$.} \]
	By Lemma~\ref{Lem:connect per-measure}, there exits a continuous path $\{\nu_t\}_{t\in[\frac{1}{3},\frac{2}{3}]}\subset \overline{\mathcal{M}_{per}(\Lambda)}\cap\mathcal{M}_{erg}(\Lambda)$ such that $\nu_{\frac{1}{3}}=\delta_{\orb(p_1)}$ and $\nu_{\frac{2}{3}}=\delta_{\orb(q_1)}$. 
	By Lemma~\ref{Lem:short path}, for any $n\geq 1$ there exist two paths 
	$$\{\nu_t\}_{t\in[\frac{1}{3^n},\frac{1}{3^{n+1}}]} \text{ and } \{\nu_t\}_{t\in[1-\frac{1}{3^n},1-\frac{1}{3^{n+1}}]} $$
	in $\overline{\mathcal{M}_{per}(\Lambda)}\cap\mathcal{M}_{erg}(\Lambda)$  connecting $\delta_{\orb(p_n)},\delta_{\orb(p_{n+1})}$ and $\delta_{\orb(q_n)},\delta_{\orb(q_{n+1})}$ respectively and the length of the paths decreases to 0 exponentially when $n$ goes to $\infty$. Therefore  $\{\nu_t\}_{t\in[0,1]}$ is a continuous path in $\overline{\mathcal{M}_{per}(\Lambda)}\cap\mathcal{M}_{erg}(\Lambda)$ satisfying that $\nu_0=\mu$ and $\nu_1=\nu$. 
	This completes the proof.
\end{proof}

\subsection{Homoclinic loop and periodic measures}
In this subsection, we prove that a homoclinic loop of a singularity can be accumulated by periodic orbits with the associated periodic measures accumulating to the singular measure by $C^r$-perturbations. 

\begin{proposition}\label{Prop:loop}
	Let $r\in\mathbb{N}\cup\{\infty\}$ and $X\in\mathscr{X}^r(M)$ be a $C^r$-vector field on $M$. If $\sigma\in\sing(X)$ is a hyperbolic singularity of $X$, and 
	$$
	\Gamma=\orb(z,\phi_t^X)\subset W^s(\sigma,\phi_t^X)\cap W^u(\sigma,\phi_t^X)
	$$
	is a homoclinic orbit of $\sigma$, then there exist a sequence of vector fields $\{X_n\}$ and a sequence of periodic points $p_n\in\per(X_n)$, such that
	\begin{enumerate}
		\item The vector field $X_n$ converges to $X$ in $\mathscr{X}^r(M)$.
		\item $\sigma\in\sing(X_n)$ and the periodic measure 
		$\delta_{\orb(p_n,\phi_t^{X_n})}$ converges to $\delta_{\sigma}$.
		\item The orbit $\orb(p_n,\phi_t^{X_n})$ converges to $\Gamma\cup\{\sigma\}$ in the Hausdorff topology.
	\end{enumerate}
\end{proposition}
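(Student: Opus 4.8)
\emph{Strategy.} The plan is to analyse the flow near the hyperbolic singularity $\sigma$ and then make one $C^r$-small perturbation, supported away from $\sigma$, that ``closes up'' the homoclinic loop $L:=\Gamma\cup\{\sigma\}$ into a long periodic orbit; morally this is the quantitative version of the classical fact that unfolding a homoclinic loop of a saddle produces a periodic orbit whose period goes to infinity and which converges to the loop. Fix a small $\delta>0$ and pick cross sections $\Sigma^{\mathrm{in}},\Sigma^{\mathrm{out}}$ transverse to $\Gamma$ at the points $q^{\mathrm{in}},q^{\mathrm{out}}$ where $\orb(z,\phi^X_t)$ enters, respectively leaves, the ball $B_\delta(\sigma)$, chosen so that $W^s_{loc}(\sigma)\cap\Sigma^{\mathrm{in}}$ and $W^u_{loc}(\sigma)\cap\Sigma^{\mathrm{out}}$ are smooth submanifolds through those points. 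The first return along $\Gamma$ factors as a \emph{local transition} $P_{loc}$, following the flow across $B_\delta(\sigma)$ and defined on $\Sigma^{\mathrm{in}}\setminus W^s_{loc}(\sigma)$, and a \emph{global transition} $P_{glob}\colon\Sigma^{\mathrm{out}}\to\Sigma^{\mathrm{in}}$ following the regular arc of $\Gamma$ outside $B_\delta(\sigma)$; $P_{glob}$ is a $C^r$ diffeomorphism onto a neighbourhood of $q^{\mathrm{in}}$ with finite transit time $\tau_{glob}(\delta)$, and, crucially, $P_{glob}(q^{\mathrm{out}})=q^{\mathrm{in}}$ because $\Gamma\subset W^s(\sigma)\cap W^u(\sigma)$. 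Standard estimates near a hyperbolic singularity give the two facts I use: \emph{(i)} $\tau_{loc}(x)\to+\infty$, with $\tau_{loc}(x)\ge-c\log d(x,W^s_{loc}(\sigma))$ for some $c>0$, as $x\to W^s_{loc}(\sigma)$; and \emph{(ii)} as $x\to W^s_{loc}(\sigma)$ from the appropriate side, $P_{loc}(x)\to q^{\mathrm{out}}$, the orbit of $x$ stays in $B_\delta(\sigma)$ until it meets $\Sigma^{\mathrm{out}}$, and its closest distance to $\sigma$ tends to $0$.

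\emph{Construction.} For each $n$ fix scales $\delta_n\to0$ and $\rho_n\to0$ with $\rho_n$ decaying faster than every power of $\delta_n$, and let $p_n:=q_n\in\Sigma^{\mathrm{in}}_{\delta_n}$ be the point at distance $\rho_n$ from $W^s_{loc}(\sigma)$ in the direction of \emph{(ii)}. Since we do not perturb inside $B_{\delta_n}(\sigma)$, the point $P_{loc}(q_n)$ is computed with $X$ and, by \emph{(ii)}, lies $o(1)$-close to $q^{\mathrm{out}}$; hence $P_{glob}(P_{loc}(q_n))$ is close to $P_{glob}(q^{\mathrm{out}})=q^{\mathrm{in}}$, hence close to $q_n$. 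Choose a flow box $\mathcal{B}$ around a point $w\in\Gamma$ on the regular arc at definite distance from $\sigma$ (so $\mathcal{B}\cap B_{\delta_n}(\sigma)=\emptyset$ for large $n$), and let $X_n$ be a perturbation of $X$ supported in $\mathcal{B}$ such that the perturbed transition from $\Sigma^{\mathrm{out}}_{\delta_n}$ to $\Sigma^{\mathrm{in}}_{\delta_n}$ sends $P_{loc}(q_n)$ exactly to $q_n$. The needed displacement has size comparable to $d(P_{glob}(P_{loc}(q_n)),q_n)\to0$, so $X_n$ can be taken $C^r$-close to $X$; and since $\orb(P_{loc}(q_n),\phi^X_t)$ converges to $\Gamma$ near $w$, the same box $\mathcal{B}$ works for all large $n$. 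As $X_n\equiv X$ on $B_{\delta_n}(\sigma)$ we get $\sigma\in\sing(X_n)$ with the same eigenvalues, and by construction $\orb(p_n,\phi^{X_n}_t)$ is periodic of period $T_n=\tau_{loc}(\rho_n)+\tau_{glob}(\delta_n)$. This is conclusion \emph{(1)}.

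\emph{Convergence of measures and orbits.} By \emph{(i)}, $\orb(p_n,\phi^{X_n}_t)$ spends time $\tau_{loc}(\rho_n)\ge-c\log\rho_n$ inside $B_{\delta_n}(\sigma)$, which dominates $\tau_{glob}(\delta_n)=O(\log(1/\delta_n))$ once $\rho_n$ is small enough relative to $\delta_n$; hence the fraction of $T_n$ spent in $B_{\delta_n}(\sigma)$ tends to $1$. Since on $B_{\delta_n}(\sigma)$ any fixed continuous function differs from its value at $\sigma$ by at most its oscillation over that ball, which tends to $0$, we obtain $\delta_{\orb(p_n,\phi^{X_n}_t)}\to\delta_\sigma$ in the weak$^*$-topology, which is \emph{(2)}. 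For \emph{(3)}: along its regular arc $\orb(p_n,\phi^{X_n}_t)$ shadows the corresponding arc of $\Gamma$ — here one uses continuity of the flow over the time $\tau_{glob}(\delta_n)$, absorbing the growing Lipschitz constant into the smallness of $\rho_n$ — while inside $B_{\delta_n}(\sigma)$ both it and the local arc of $\Gamma$ lie within $\delta_n$ of $\sigma$ and $\orb(p_n,\phi^{X_n}_t)$ comes within $o(1)$ of $\sigma$; thus the Hausdorff distance from $\orb(p_n,\phi^{X_n}_t)$ to $L$ tends to $0$.

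\emph{Main difficulty.} The crux is the tension between the two scales: making the periodic orbit genuinely pass close to $\sigma$ — which is exactly what forces $\delta_{\orb(p_n,\phi^{X_n}_t)}\to\delta_\sigma$ — requires $\rho_n$ far smaller than $\delta_n$, but then $P_{glob}$ acts over a long time $\tau_{glob}(\delta_n)\to\infty$ with a Lipschitz constant that blows up, so both the closing step and the shadowing step rest on a quantitative control of how strongly $P_{loc}$ contracts toward $W^u_{loc}(\sigma)$ together with the fast decay of $\rho_n$, after which one must still verify that the displacement needed to close the loop — hence the $C^r$-size of the perturbation — tends to $0$. A secondary technical point is establishing \emph{(i)}–\emph{(ii)} for an arbitrary hyperbolic singularity in any dimension, which is cleanest to argue directly from $\dot x=Ax+o(|x|)$ in a chart straightening $W^s_{loc}(\sigma)$ and $W^u_{loc}(\sigma)$, rather than via a smooth normal form.
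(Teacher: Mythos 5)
Your strategy is the right one, and it does prove the proposition, but it is not quite the route the paper takes and you have introduced an unnecessary technical difficulty by shrinking the neighbourhood of $\sigma$.

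The paper fixes, once and for all, a Grobman--Hartman linearising neighbourhood $U(\sigma)$ (with boundary transversal sections at definite distance from $\sigma$) and invokes the $\lambda$-lemma (Inclination Lemma) on a disk $\Sigma_2^u$ transverse to $W^s_{\mathrm{loc}}(\sigma)$. This produces points $x_n\to q^{\mathrm{out}}$ whose forward orbits stay in $U(\sigma)$ for time $S_n\to\infty$ and emerge at $y_n\to q^{\mathrm{in}}\in W^u_{\mathrm{loc}}(\sigma)$. The two $C^r$-small perturbations are then supported in two fixed flow boxes near the entry and exit of $\Gamma$ from $U(\sigma)$, where the needed displacement is just $d(x_n,q^{\mathrm{out}})+d(y_n,q^{\mathrm{in}})\to 0$. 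Because $U(\sigma)$, the flow boxes, and the regular arc of $\Gamma$ outside $U(\sigma)$ are all fixed, every Lipschitz constant in sight is bounded independently of $n$, and the weak$^*$-convergence $\delta_{\Gamma_n}\to\delta_\sigma$ follows because $\pi_n-S_n$ is bounded, $S_n\to\infty$, and in the linear model the orbit from $x_n$ to $y_n$ spends all but a bounded amount of its time in any fixed $B_\epsilon(\sigma)$.

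In your version you let $\delta_n\to 0$ as well, which is what creates the two-scale tension you then spend the ``main difficulty'' paragraph fighting: the global transit time $\tau_{\mathrm{glob}}(\delta_n)$ and its Lipschitz constant blow up, and you must argue that $\rho_n$ decays fast enough to beat them. This is not wrong, but it is self-inflicted. Nothing in conclusions (2) or (3) requires $\delta_n\to 0$; with $\delta$ fixed, weak$^*$-convergence already follows because the fraction of the period spent in any fixed $B_\epsilon(\sigma)\subset B_\delta(\sigma)$ tends to $1$, and Hausdorff convergence of $\orb(p_n)$ to $\Gamma\cup\{\sigma\}$ follows because, inside $U(\sigma)$, the $\lambda$-lemma (equivalently your fact~\emph{(ii)}, stated precisely) forces the orbit segment from $x_n$ to $y_n$ to converge to the one-dimensional set $\big(W^u_{\mathrm{loc}}(\sigma)\cup W^s_{\mathrm{loc}}(\sigma)\big)\cap\Gamma\cup\{\sigma\}$. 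You should also be aware that your fact~\emph{(ii)} — that $P_{\mathrm{loc}}(q_n)\to q^{\mathrm{out}}$ for $q_n\to W^s_{\mathrm{loc}}(\sigma)$ ``in the appropriate direction'' — is precisely the Inclination Lemma; ``appropriate direction'' needs to be made precise (a disk transverse to $W^s_{\mathrm{loc}}(\sigma)$, iterated forward, accumulates $C^1$ on $W^u_{\mathrm{loc}}(\sigma)$, so one can pick $q_n$ on such a disk so that its exit point limits on $q^{\mathrm{out}}$ rather than on some other point of $W^u_{\mathrm{loc}}(\sigma)$ or on the wrong branch when $\dim E^u(\sigma)=1$). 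Your single-flow-box closing step versus the paper's two flow boxes at $\partial U(\sigma)$ is an inessential cosmetic difference; either works once the loop is closed by a displacement of size $o(1)$ supported away from $\sigma$.
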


\begin{proof}
	Let $d=\dim M$ and $s=\dim E^s(\sigma)$ be the index of $\sigma$, then $\dim E^u(\sigma)=d-s$. Since $\Gamma=\orb(z,\phi_t^X)$ is a homoclinic loop of $X$, we have
	$$
	\lim_{t\rightarrow\pm\infty}\phi_t^X(z)=\sigma.
	$$
	By Grobman-Hartman Theorem \cite[4.10 Theorem]{deM-Palis-book}, there exists a small neighborhood $U(\sigma)$ of $\sigma$, such that the flow $\phi_t^X$ is orbit equivalent to the linear system generated by $DX(\sigma)$. Moreover, shrinking $U(\sigma)$ if necessary, we can assume
	there exist $T_1<0<T_2$ satisfying
	$$
	U(\sigma)\cap\Gamma~=~
	\phi_{(-\infty,T_1)}^X(z)~\cup~\phi_{(T_2,+\infty)}^X(z),
	$$
	where $\phi_{T_1}^X(z)\in W^u_{\it loc}(\sigma,\phi_t^X)$ and $\phi_{T_2}^X(z)\in W^s_{\it loc}(\sigma,\phi_t^X)$.
	
	There exists a $(d-1)$-dimensional $C^\infty$-smooth cross section $\Sigma_1$ which is transverse to $X$ everywhere and satisfies the following properties:
	\begin{itemize}
		\item $\phi_{T_1-1}^X(z)\in\Sigma_1$, and $\phi_{[0,1]}^X(\Sigma_1)\cap\Gamma=\phi_{[T_1-1,T_1]}^X(z)$.
		\item There exists a $s$-dimensional $C^\infty$-smooth disk $\Sigma_1^s\subset\Sigma_1$ such that $\Sigma_1^s$ is transverse to $W^u_{\it loc}(\sigma)$ at $\phi_{T_1-1}^X(z)$.
	\end{itemize}
	Symmetrically, There exists a $(d-1)$-dimensional $C^\infty$-smooth cross section $\Sigma_2$ which is transverse to $X$ everywhere and satisfies the following properties:
	\begin{itemize}
		\item $\phi_{T_2+1}^X(z)\in\Sigma_2$, and  $\phi_{[-1,0]}^X(\Sigma_2)\cap\Gamma=\phi_{[T_2,T_2+1]}^X(z)$;
		\item there exists a $(d-s)$-dimensional $C^\infty$-smooth disk $\Sigma_2^u\subset\Sigma_2$ such that $\Sigma_2^u$ is transverse to $W^s_{\it loc}(\sigma)$ at $\phi_{T_2+1}^X(z)$.
	\end{itemize}
	
	By the $\lambda$-lemma (Inclination Lemma), see \cite[7.2 Lemma]{deM-Palis-book}, there exist a sequence of points $x_n\in\Sigma_2^u\subseteq\Sigma_2$ and $S_n\rightarrow+\infty$, such that
	\begin{enumerate}
		\item The point $x_n\rightarrow\phi_{T_2+1}^X(z)$ in $\Sigma_2^u$ as $n\rightarrow\infty$.
		\item The point $y_n=\phi_{S_n}^X(z)\in\Sigma_1^s\subseteq\Sigma_1$, and $y_n\rightarrow\phi_{T_1-1}^X(z)$ in $\Sigma_1^s$ as $n\rightarrow\infty$.
		\item The orbit segment $\phi_{[0,S_n]}^X(x_n)\subset U(\sigma)$ for every $n$.
	\end{enumerate}
	
	Now we consider two flow box $V_1=\phi_{[T_1-1,T_1]}^X(\Sigma_1)$ and $V_2=\phi_{[T_2,T_2+1]}^X(\Sigma_2)$. Since $x_n\rightarrow\phi_{T_2+1}^X(z)$ in $\Sigma_2^u$ and $y_n\rightarrow\phi_{T_1-1}^X(z)$ in $\Sigma_1^s$, there exists a sequence of vector fields $X_n\in\mathscr{X}^r(M)$, such that
	\begin{enumerate}
		\item The vector field $X_n$ converges to $X$ in $C^r$-topology.
		\item $X_n|_{M\setminus(V_1\cup V_2)}\equiv X|_{M\setminus(V_1\cup V_2)}$.
		\item There exist $a_n,b_n\rightarrow 1$ as $n\rightarrow\infty$, such that
		$$
		\phi_{a_n}^{X_n}(y_n)=\phi_{T_1}^X(z),
		\qquad {\rm and} \qquad
		\phi_{b^n}^{X_n}\big( \phi_{T_2}^X(z) \big)=x_n.
		$$ 
	\end{enumerate}
	This implies the vector field $X_n$ satisfies 
	$\phi_{b_n+S_n+a_n}^{X_n}\big(\phi_{T_2}^X(z)\big)=\phi_{T_1}^X(z)$. 
	So $\Gamma_n=\orb(z,\phi_t^{X_n})$ is a periodic orbit of $X_n$ with period $\pi_n=a_n+b_n+(T_2-T_1)+S_n$. Moreover, we have
	$$
	\phi_{t+b_n}^{X_n}\big( \phi_{T_2}^X(z) \big)=
	\phi_t^{X_n}(x_n)=\phi_t^X(x_n), ~~\forall t\in[0,S_n],
	\quad {\rm and} \quad
	\phi_t^{X_n}(z)=\phi_t^X(z), ~~\forall t\in[T_1,T_2].
	$$

	Recall that $x_n\rightarrow\phi_{T_2+1}^X(z)\in W^s_{\it loc}(\sigma)$ and $y_n\rightarrow\phi_{T_1-1}^X(z)$. Since $(\pi_n-S_n)$ is uniformly bounded above, the fact that
	$$
	\phi_{[0,S_n]}^{X_n}(x_n)=\phi_{[0,S_n]}^X(x_n)\subseteq U(\sigma)
	$$
	and $\phi_t^X$ is orbit equivalent to the linear system generated by $DX(\sigma)$ in $U(\sigma)$ implies $\Gamma_n$ converges to $\Gamma\cup\{\sigma\}$ in the Hausdorff topology and the periodic measure 
	$\delta_{\Gamma_n}$ converges to $\delta_{\sigma}$.
\end{proof}

\subsection{Isolated singular measures: a criterion}

In this subsection, we show that if the unstable manifold of a codimension one singularity is contained in the stable manifold of a periodic orbit, then the Dirac measure of this singularity is isolated in the space of ergodic measures. The idea of this proposition is inspired by the work of J. Palis \cite{palis}, see also \cite{deM-Palis}.

\begin{proposition}\label{Prop:kick-out singularity}
	Let $X\in\mathscr{X}^1(M)$ and $\sigma\in\sing(X)$ be a hyperbolic singularity satisfying $\dim E^u(\sigma)=1$. Assume there exists a hyperbolic periodic point $p\in\Lambda$ such that $W^u(\sigma)\setminus\{\sigma\}\subset W^s(\orb(p))$. Then the Dirac measure $\delta_{\sigma}$ is isolated in $\mathcal{M}_{erg}(X)$.
\end{proposition}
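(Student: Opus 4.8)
The plan is to show that there is a weak*-neighborhood of $\delta_\sigma$ in $\mathcal{M}_{inv}(X)$ that contains no ergodic measure other than $\delta_\sigma$ itself. Since $\sigma$ has a one-dimensional unstable manifold, the eigenvalue of $DX(\sigma)$ in the unstable direction is a single real number $\lambda_3>0$, and $W^u(\sigma)\setminus\{\sigma\}$ consists of (at most) two orbits, each limiting onto $\orb(p)$ by hypothesis. First I would fix a small Grobman--Hartman neighborhood $U(\sigma)$ on which $\phi_t^X$ is topologically conjugate to its linearization, together with a small flow box $W(p)$ around $\orb(p)$. The key geometric observation is a trapping-type statement: one can choose these neighborhoods and a slightly larger neighborhood $U_0\supset U(\sigma)$ such that any orbit segment that enters $U(\sigma)$, stays near $\sigma$ for a long time, and then leaves, must exit along a thin tube following $W^u(\sigma)$ and hence is forced into $W(p)$; conversely, an orbit that spends a long stretch inside $W(p)$ is genuinely recurrent near the periodic orbit and is quantitatively far from spending most of its time near $\sigma$.

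The core step is the following dichotomy for an ergodic measure $\mu$ with $\mu(U(\sigma))$ close to $1$. Either $\mu = \delta_\sigma$, or $\mu$-a.e. point $x$ has an orbit that repeatedly leaves $U(\sigma)$ (since $\delta_\sigma$ is the only invariant measure supported entirely on $\{\sigma\}$, and by Poincaré recurrence an orbit cannot stay in a neighborhood of $\sigma$ forever unless it converges to $\sigma$, which would force ergodicity to put the mass on $\sigma$). In the second case, each excursion out of $U(\sigma)$ is, by the trapping statement above, an excursion that passes through $W(p)$ and spends a definite amount of time $\tau_0>0$ there before possibly returning near $\sigma$. Here is where the work lies: I would use the Grobman--Hartman conjugacy together with the linear estimate to show that the time spent in $U(\sigma)$ between consecutive visits to $W(p)$, although it can be long, is controlled by the time spent in $W(p)$ in the following averaged sense — because the only way back to $\sigma$ from a neighborhood of $\orb(p)$ is again along the stable/unstable configuration, each ``long stay near $\sigma$'' is bracketed by ``stays near $p$'' of at least length $\tau_0$. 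Consequently the Birkhoff average of the indicator $\mathbf{1}_{W(p)}$ along a $\mu$-generic orbit is bounded below by a positive constant $c=c(\tau_0)$ independent of how close $\mu(U(\sigma))$ is to $1$, so $\mu(W(p))\ge c$. Choosing $W(p)\cap U(\sigma)=\emptyset$, this contradicts $\mu(U(\sigma))>1-c$.

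Therefore the only ergodic measure in the neighborhood $\{\mu : \mu(U(\sigma))>1-c\}$ of $\delta_\sigma$ is $\delta_\sigma$ itself, which is exactly the assertion that $\delta_\sigma$ is isolated in $\mathcal{M}_{erg}(X)$. The main obstacle I anticipate is making the trapping statement rigorous: one must rule out orbits that enter $U(\sigma)$, wander for a long time, and then exit through a part of $\partial U(\sigma)$ \emph{not} close to $W^u_{\it loc}(\sigma)$, and also control orbits that shadow $W^u(\sigma)$ only part-way before drifting off. Both are handled by the linearization on $U(\sigma)$ (any point staying long enough is exponentially attracted to $W^u_{\it loc}(\sigma)$ in the transverse directions, since $\dim E^u(\sigma)=1$ and $E^s(\sigma)$ is contracting) combined with continuity of the flow outside $U(\sigma)$ and compactness of $\partial U(\sigma)\cap\{$transverse to $W^u\}$; but writing the quantifiers carefully — first shrink $U(\sigma)$, then choose the exit-time threshold, then choose $c$ — is the delicate bookkeeping. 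Palis's original argument for the analogous statement about homoclinic tangencies and the atomic measure gives the template for this bookkeeping.
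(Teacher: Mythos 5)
Your overall strategy — shadowing $W^u(\sigma)$ forces entry near $W^s(\orb(p))$, then quantitatively compare time near $\sigma$ with time near $\orb(p)$ — is the same Palis-style moduli argument the paper uses. But the crucial quantitative step in your sketch is not correct as stated, and the error is exactly where you say "Here is where the work lies."

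You claim that the Birkhoff average of $\mathbf{1}_{W(p)}$ along a $\mu$-generic orbit is bounded below by a constant $c(\tau_0)>0$, and you justify this by saying each long stay near $\sigma$ is bracketed by a stay near $p$ of length at least $\tau_0$. That constant lower bound $\tau_0$ is not enough. If a $\mu_n$-generic orbit spends time $T_n\to\infty$ near $\sigma$ in each excursion but only a fixed time $\tau_0$ near $p$ between excursions, then the proportion of time in $W(p)$ is roughly $\tau_0/(T_n+\tau_0)\to 0$, so $\mu_n(W(p))\to 0$ while $\mu_n\to\delta_\sigma$, and nothing prevents $\mu_n\ne\delta_\sigma$. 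What the proof actually needs — and what the paper establishes in its Claims 3.8--3.10 — is a \emph{linear} relation: the time $T_p$ spent near $\orb(p)$ on each excursion is at least $L\cdot T_\sigma$ for a fixed constant $L>0$, where $T_\sigma$ is the time spent near $\sigma$ on that excursion. This proportionality comes from the fact that an orbit exiting $U(\sigma)$ at transverse distance $\|x_w\|$ from $W^u_{\mathrm{loc}}(\sigma)$ has $T_\sigma\sim\frac{1}{|u_1|}\log\frac{1}{\|x_w\|}$, and it enters the Poincaré section at $p$ at distance $\lesssim K\|x_w\|$ from $W^s_{\mathrm{loc}}(\orb(p))$, so by the return-map expansion rate $\kappa$ at $p$ it must stay near $\orb(p)$ for time $\gtrsim\frac{T_0}{\log\kappa}\log\frac{1}{K\|x_w\|}$. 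Both times are of order $\log\frac{1}{\|x_w\|}$, hence comparable, and this is the real content of the modulus argument; your bracketing-by-$\tau_0$ observation does not supply it. (Once one has the linear estimate $T_p\ge L\, T_\sigma$, the paper also takes a slicker route to the contradiction: rather than bounding $\mu_n(W(p))$ from below and covering a weak*-neighborhood, it chooses a continuous function $\varphi$ with $\varphi(\sigma)=-1$, $\varphi\ge 0$ off $U(\sigma,\delta_1)$, and $\varphi\ge L^{-1}$ on $V(p)$, so that $\int\varphi\,\mathrm{d}\mu_n\ge 0$ for all $n$, contradicting $\int\varphi\,\mathrm{d}\delta_\sigma=-1$.) Your identification of the delicate points as the trapping statement and linearization bookkeeping is reasonable, but the proportionality estimate — not the trapping — is the step where the proof would fail without more work.
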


\begin{remark}\label{Rem:isolated-measure}
	Note that $\dim E^u(\sigma)=1$ implies $W^u(\sigma)$ consists two branches, each of which is a single orbit.
	Proposition~\ref{Prop:kick-out singularity} gives a general criterion to obtain isolated ergodic measures inside a (non-trivial) transitive set. For instance, when the hyperbolic singularity $\sigma$ of $X$ is contained in a homoclinic class $\Lambda$ and if one could make $W^u(\sigma)\setminus\{\sigma\}\subset W^s(\orb(p))$ through $C^r (r\geq 1)$ perturbations  where $\orb(p)$ is a hyperbolic periodic orbit, then $\delta_\sigma$ is isolated in $\mathcal{M}_{erg}(Y)$ where $Y$ is the resulting vector field after perturbation. See the dense part of Theorem~\ref{Thm:SH-attractor} in Appendix~\ref{Section:C1}.
\end{remark}

\begin{proof}
	Assume there exists a sequence of ergodic measures $\mu_n$ approximating the Dirac measure $\delta_{\sigma}$. We only need to find a continuous function $\varphi:M\rightarrow\RR$, such that $\int\varphi{\rm d}\mu_n$ does not converge to $\int\varphi{\rm d}\delta_{\sigma}$, then we get a contradiction. 
	
	Since $\sigma$ is a hyperbolic singularity,
	each $\mu_n$ does not support on a singularity. 
	Let $x_n\in\supp(\mu_n)$ be a generic point of $\mu_n$, i.e. the empirical measure equidistributed on the orbit segment $\phi_{[0,T]}(x_n)$ converges to $\mu_n$ as $T\rightarrow+\infty$. We must have $\ind(p)<d-1$. Otherwise, $\orb(p)$ is a periodic sink. Hence as the positive orbit of $x_n$ approaches $\sigma$, it also approaches $W^u(\sigma)$ and enters the attracting region of $\orb(p)$. This contradicts to $x_n\in\supp(\mu_n)$ which implies that $x_n$ is a recurrent point.
	
	As we have assumed $\dim E^u(\sigma)=1$, let $\lambda_1,\cdots,\lambda_d$ be eigenvalues of $DX(\sigma)$ which satisfy
	$|{\rm Re}\lambda_1 |\leq \cdots \leq |{\rm Re}\lambda_{d-1}|
	< 1< |{\rm Re}\lambda_d|=|\lambda_d |$.
	We fix a constant $\epsilon_1$ satisfying $0<\epsilon_1<\min\big\{-\log|{\rm Re}\lambda_{d-1}|,\log|\lambda_d|\big\}$, and let
	\begin{align}\label{mu1}
	u_1=\log|{\rm Re}\lambda_{d-1}|+\epsilon_1<0.
	\end{align}
	
	\begin{claim}\label{clm:sigma-nbhd}
		There exists a neighborhood $U(\sigma)$ of $\sigma$ admitting a $C^1$-coordinate
		$$
		U(\sigma)~=~
		\left\{(x,y)=(x_1,\cdots,x_{d-1},y)\in\RR^{d-1}\times\RR:
		~\|x\|=\big(\sum_{i=0}^{d-1}x_i^2\big)^{1/2}\leq1,
		~|y|\leq1\right\}
		$$
		such that
		\begin{enumerate}
			\item The singularity $\sigma=(0,0)\in\RR^{d-1}\times\RR$, and the local stable and unstable manifolds of $\sigma$ in $U(\sigma)$ are
			$$
			W^s_{\it loc}(\sigma)=\big\{(x,0):~\|x\|\leq1\big\},
			\qquad {\it and} \qquad
			W^u_{\it loc}(\sigma)=\big\{(0,y):~|y|\leq1\big\}.
			$$
			\item For every $z=(x,y)\in U(\sigma)$, let $\phi_t(z)=\big(x(t),y(t)\big)$ be the orbit of $z$  under $\phi_t$ where $\big(x(0),y(0)\big)=(x,y)$. Assume the orbit segment $\phi_{[T_1,T_2]}(z)\subset U(\sigma)$ where $T_1<0<T_2$, then it satisfies $|y'(t)-\lambda_d\cdot y(t)|\leq\epsilon_1\cdot|y(t)|,~~\forall T_1\leq t \leq T_2$,
			$$
			\|x(t)\|\leq\exp(u_1 t)\|x(0)\|, ~~\forall 0\leq t \leq T_2;
			\quad {\it and} \quad
			\|x(t)\|\geq\exp(u_1 t)\|x(0)\|, ~~\forall T_1\leq t\leq0.
			$$
			\item $U(\sigma)\cap\orb(p,\phi_t)=\emptyset$, and for $z^{\pm}=(0,\pm1)\in\RR^{d-1}\times\RR$, they satisfy
			$\phi_t(z^\pm)\notin U(\sigma)$ for every $t>0$.      	
		\end{enumerate}
	\end{claim}
	
	\begin{proof}[Proof of Claim~\ref{clm:sigma-nbhd}]
		The first two items have been proved in Lemma 2.1, Theorem 2.4 and Theorem 2.5 of \cite{Shilnikov}. The third item comes from the fact that $z^\pm\in W^u_{\it loc}(\sigma)$ while their orbits $\orb(z^\pm)\subseteq W^s(\orb(p))$. So we only need to take $U(\sigma)$ small enough.
	\end{proof}
	
	For every $0<\delta<1$, we denote $U(\sigma,\delta)=\big\{(x,y)\in U(\sigma):\|x\|\leq\delta,|y|\leq\delta\big\}$. Let 
	$$
	\Sigma^{\pm}=\big\{(x,\pm1)\in U(\sigma):\|x\|\leq1\big\},
	\qquad {\rm and} \qquad
	\Sigma^{\pm}(\delta)=\big\{(x,\pm1)\in \Sigma^{\pm}:\|x\|\leq\delta\big\}.
	$$ 
	then $\Sigma^{\pm}\subseteq\partial U(\sigma)$. Moreover, the second item of Claim \ref{clm:sigma-nbhd} shows that the vector field $X$ is transverse to $\Sigma^{\pm}$ and points outside $U(\sigma)$. For every $\delta>0$, $\Sigma^\pm$ are local cross sections of $X$ centered at $z^\pm$ respectively.
	
	\begin{claim}\label{clm:time-estimate-1}
		For every $0<\delta<1$ and $z=(x_z,y_z)\in U(\sigma,\delta)$ with $x_z\neq 0,y_z>0$, it satisfies
		\begin{enumerate}
			\item  There exist $T_1<0<T_2$ such that $\phi_{T_1}(z)\in\partial U(\sigma)\setminus\Sigma^\pm$, 
			$\phi_{T_2}(z)\in\Sigma^+$, and $\phi_t(z)\in {\rm Int}U(\sigma)$ for every $t\in(T_1,T_2)$.
			\item There exist $T_1'\in(T_1,0]$ and $T_2'\in[0,T_2)$, such that $\phi_{[T_1,T_2]}(z)\cap U(\sigma,\delta)=\phi_{[T_1',T_2']}(z)$.
			\item If  $(x_w,1)=\big(x(T_2),y(T_2)\big)=\phi_{T_2}(z)\in\Sigma^+$, then we have
			$\|x_w\|<\delta<1$ and
			$$
			T_2'-T_1'~\leq~T_2-T_1~\leq~
			\frac{1}{u_1}\cdot\big[\log\|x_w\|-\log1\big]
			~=~\frac{1}{u_1}\cdot\log\|x_w\|.
			$$
		\end{enumerate}
		The same conclusion holds for $x_z\neq0,y_z<0$ if we replace $\Sigma^+$ by $\Sigma^-$.
	\end{claim}
	
	\begin{proof}[Proof of Claim~\ref{clm:time-estimate-1}]
		From the second item of Claim \ref{clm:sigma-nbhd}, for $z=(x,y)$ satisfying $x\neq0$ and $y>0$, the flow $\big(x(t),y(t)\big)$ satisfies
		\begin{itemize}
			\item $y(t)>0$ and monotonous increasing with respect to $t$, which implies $\exists T_2>0$ such that $\phi_{T_2}(z)\in\Sigma^+$;  
			\item $\|x(t)\|\geq\exp(u_1 t)\|x(0)\|$ for every $t\leq0$, which implies $\exists T_1<0$, such that $\phi_{T_1}(z)\in\partial U(\sigma)\setminus\Sigma^\pm$.
		\end{itemize}
		Moreover, we have $\phi_t(z)\in {\rm Int}U(\sigma)$ for every $t\in(T_1,T_2)$. This proves the first item. The proof of  second item is the same. 
		
		As $\|x(T_1)\|=1$ and $\|x(T_2)\|=\|x_w\|\leq\exp(u_1 T_2)\|x\|<\delta$,
		we apply the fact that $\|x(t)\|\geq\exp(u_1 t)\|x(0)\|$ for every $t\leq0$, which shows
		$$
		T_2'-T_1'~\leq~T_2-T_1~\leq~
		\frac{1}{u_1}\cdot\big(\log\|x_w\|-\log1\big)
		~=~\frac{1}{u_1}\cdot\log\|x_w\|.
		$$
	\end{proof}
	
	Recall $p$ is a hyperbolic periodic point of $X$ with index $0<\ind(p)<d-1$. We denote $k=\ind(p)$, then we have the following claim.
	
	\begin{claim}\label{clm:p-nbhd}
		There exist a cross section $\Sigma_p$ transverse to $X$ everywhere admitting a $C^1$ coordinate
		$$
		\Sigma_p=\left\{ (r,s)\in\RR^k\times\RR^{d-1-k}:
		\|r\|\leq1,\|s\|\leq1 \right\},
		$$
		and two constants $\kappa>1,~T_0>0$,
		such that
		\begin{enumerate}
			\item The orbit $\orb(p,\phi_t)\cap\Sigma_p=\{p\}$ and $p=(0,0)\in\RR^k\times\RR^{d-1-k}$.
			\item The local stable and unstable manifolds of the orbit of $p$ intersect $\Sigma_p$ as
			$$
			W^s_{\it loc}(\orb(p))\cap\Sigma_p=
			\big\{ (r,0):\|r\|\leq1 \big\},
			\qquad
			W^u_{\it loc}(\orb(p))\cap\Sigma_p=
			\big\{ (0,s):\|s\|\leq1 \big\}.
			$$
			\item The Poincar\'e return map 
			$$
			R:~\Sigma_p'=
			\big\{(r,s)\in\Sigma_p:\|s\|\leq\kappa^{-1}\big\}
			\longrightarrow \Sigma_p
			$$
			is well defined and $C^1$-smooth on $\Sigma_p'$. For every $(r_0,s_0)\in\Sigma_p'$, if we denote $(r_1,s_1)=R(r,s)$, then
			$$
			\|r_1\|<\|r_0\|, \qquad {\rm and} \qquad 
			\|s_1\|\leq\kappa\cdot\|s_0\|.
			$$
			\item For every $z\in\Sigma_p'$, let $T_z>0$ be the first return time of $z$ to $\Sigma_p$, i.e. $R(z)=\phi_{T_z}(z)\in\Sigma_p$, then $T_z\geq T_0$ and
			$$
			\left( \bigcup_{z\in\Sigma_p'}\phi_{[0,T_z]}(z)  \right)
			~\cap~ U(\sigma) ~=~\emptyset.
			$$
		\end{enumerate}
	\end{claim} 
	
	\begin{proof}[Proof of Claim~\ref{clm:p-nbhd}]
		Let $\pi(p)$ be the period of $p$ and $u_2>0$ be the largest Lyapunov exponent of $\orb(p)$. 
		There exists a $C^1$ cross section $\Sigma_1$ containing $p$ and transversing to $X$ everywhere. By shrinking $\Sigma_1$ if necessary, we can assume $\Sigma_1\cap\orb(p,\phi_t)=\{p\}$.
		
		Since $p$ is a hyperbolic periodic orbit of $X$, there exists a small neighborhood $\Sigma_2\subseteq\Sigma_1$, such that the Poincar\'e map $R:\Sigma_2\rightarrow\Sigma_1$ is well defined and $C^1$-smooth. Moreover, $p$ is a hyperbolic fixed point of $R$ with largest Lyapunov exponent equal to $\big(\pi(p)u_2\big)$. For every $z\in\Sigma_2$, let $T_z$ be the first return time of $z$ in $\Sigma_1$, i.e. $R(z)=\phi_{T_z}(z)$. 
		By shrink $\Sigma_2$ if necessary, there exists $T_0>0$ which is close to $\pi(p)$, such that $T_z\geq T_0$ for every $z\in\Sigma_2$. Moreover, since $U(\sigma)\cap\orb(p)=\emptyset$, we can assume
		$$
		\left( \bigcup_{z\in\Sigma_2}\phi_{[0,T_z]}(z)  \right)
		~\cap~ U(\sigma) ~=~\emptyset.
		$$
		
		Finally, we fix a small constant $\epsilon_2>0$ and take 
		\begin{align}\label{kappa}
		\kappa=\exp\big[\pi(p)\cdot(u_2+\epsilon_2)\big].
		\end{align}
		Similar to Claim \ref{clm:sigma-nbhd}, we can take an adapted $C^1$-coordinate on $\Sigma_2$, such that $p=(0,0)\in\RR^k\times\RR^{d-1-k}$ in this coordinate and the second and third items of the claim are satisfied on
		$$
		\Sigma_p=
		\left\{ (r,s)\in\RR^k\times\RR^{d-1-k}:
		\|r\|\leq1,\|s\|\leq1 \right\}
		~\subseteq~\Sigma_2.
		$$
		See \cite[Theorem 2.4 \& 2.5]{Shilnikov}. This finishes the proof of the claim.   	
	\end{proof}
	
	Let $V(p)=\bigcup_{z\in\Sigma_p'}\phi_{[0,T_z]}(z)$ which is a neighborhood of $\orb(p)$.
	Since $W^u(\sigma)\setminus\{\sigma\}\subseteq W^s(\orb(p))$, for $z^\pm=(0,\pm1)\in U(\sigma)\cap W^u_{\it loc}(\sigma)$,  there exist $T^\pm>0$ and $(r^\pm,0)\in{\rm Int}\big(\Sigma_p'\big)$ such that
	$$
	(r^+,0)=\phi_{T^+}(z^+), 
	\qquad {\rm and} \qquad
	(r^-,0)=\phi_{T^-}(z^-).
	$$
	Since $\orb^+(z^\pm)\cap U(\sigma)=\emptyset$, there exist two constants $\delta_0>0,K>0$ with $K\cdot\delta_0\ll1$, such that
	\begin{itemize}
		\item The Poincar\'e map
		$P^+:\Sigma^+(\delta_0)\rightarrow\Sigma_p$ is well defined and $C^1$-smooth, satisfying $P^+(z^+)=P^+(0,1)=(r^+,0)$ and for every $w=(x_w,1)\in\Sigma^+(\delta_0)$, if we denote $P^+(w)=(r_w,s_w)$, then it satisfies
		$$
		\|r_w-r^+\|\leq K\cdot\|x_w\|,
		\qquad {\rm and} \qquad
		\|s_w\|\leq K\cdot\|x_w\|.
		$$
		Moreover, if we denote $T^+_w>0$  the time satisfying $P^+(w)=\phi_{T^+_w}(w)$, then the orbit segment $\phi_{(0,T^+_w)}(w)\cap U(\sigma)=\emptyset$.
		
		\item The Poincar\'e map
		$P^-:\Sigma^-(\delta_0)\rightarrow\Sigma_p$ is well defined and $C^1$-smooth, satisfying $P^-(z^-)=P^-(0,-1)=(r^-,0)$ and for every $w=(x_w,-1)\in\Sigma^-(\delta_0)$, if we denote $P^-(w)=(r_w,s_w)$, then it satisfies
		$$
		\|r_w-r^-\|\leq K\cdot\|x_w\|,
		\qquad {\rm and} \qquad
		\|s_w\|\leq K\cdot\|x_w\|.
		$$
		Moreover, if we denote $T^-_w>0$  the time satisfying $P^-(w)=\phi_{T^-_w}(w)$, then the orbit segment $\phi_{(0,T^-_w)}(w)\cap U(\sigma)=\emptyset$.
	\end{itemize}
	
	\begin{claim}\label{clm:time-estimate-2}
		For every $z=(x_z,y_z)\in U(\sigma,\delta_0)$ with $x_z\neq0,y_z>0$, let $T_1<0<T_2$ be defined as in Claim \ref{clm:time-estimate-1} and $w=(x_w,1)=\phi_{T_2}(z)\in\Sigma^+(\delta_0)$, then for the point $P^+(w)=(r_w,s_w)\in\Sigma_p'$, let 
		$$
		T_p=\left(-\frac{\log(K\|x_w\|)}{\log\kappa}-2\right)\cdot T_0.
		$$ 
		we have $\phi_{[0,T_p]}(P^+(w))\subseteq V(p)$.
		
		In particular, combined with the third item of Claim \ref{clm:time-estimate-1} that $T_2-T_1\leq \log\|x_w\|/u_1$, we have
		\begin{align}\label{equ:time}
		\frac{T_p}{T_2-T_1}~\geq~
		\frac{-u_1\cdot T_0}{\log\kappa}\cdot
		\left(1+\frac{\log K+2\log\kappa}{\log\|x_w\|}\right).
		\end{align}
		
		The same conclusion holds for $z=(x_z,y_z)\in U(\sigma,\delta_0)$ with $x_z\neq0,y_z<0$.
	\end{claim}%\marginpar{\color{red} Comment 7. To complete.}
	
	\begin{proof}[Proof of Claim~\ref{clm:time-estimate-2}]
		We only prove the case that $y_z>0$.
		From the property of Poincar\'e map $P^+:\Sigma^+(\delta_0)\to\Sigma_p$, we have $\|s_w\|\leq K\cdot\|x_w\|$. Moreover, for the Poincar\'e map $R:\Sigma_p'\to\Sigma_p$ in the neighborhood of ${\rm Orb}(p)$, we can apply the third item of Claim \ref{clm:p-nbhd} which shows that for the point $P^+(w)\in\Sigma_p'$, it will iterates by $R$ at least $([-\log\|s_w\|/\log\kappa]-1)$-times hitting $\Sigma_p'$ inside $V(p)$. So for
		$$
		T_p~=~\left(-\frac{\log(K\|x_w\|)}{\log\kappa}-2\right)\cdot T_0
		~\leq~
		\left(\left[\frac{\log\|s_w\|}{\log\kappa}\right]-1\right)\cdot T_0,
		$$
		we have $\phi_{[0,T_p]}(P^+(w))\subseteq V(p)$.
	
		Moreover, 
		by the fact $0<T_2-T_1\leq \log\|x_w\|/u_1$, we have the following estimates:
		\begin{align*}
			\frac{T_p}{T_2-T_1}&\geq \left(-\frac{\log(K\|x_w\|)}{\log\kappa}-2\right)\cdot T_0\cdot \frac{u_1}{\log\|x_w\|}\\
			&= \frac{-u_1\cdot T_0}{\log \kappa}\left(\frac{\log(K\|x_w\|)+2\log\kappa}{\log\|x_w\|}  \right)\\
			&= \frac{-u_1\cdot T_0}{\log\kappa}\cdot
			\left(1+\frac{\log K+2\log\kappa}{\log\|x_w\|}\right).
		\end{align*}
		This proves the claim.
	\end{proof}
	
	Now we take $\delta_1\in(0,\delta_0]$ small enough, such that
	$(\log K+2\log\kappa)/\log\delta_1>-1/2$.
	Then for every $z=(x_z,y_z)\in U(\sigma,\delta_1)$, the corresponding estimation of Equation (\ref{equ:time}) is
	\begin{align}\label{equ:time-1}
	T_p~\geq~
	L\cdot(T_2-T_1),
	\qquad {\rm where} \qquad
	L=\frac{-u_1\cdot T_0}{2\log\kappa}.
	\end{align}

	Now we take a $C^1$ smooth function $\varphi:M\rightarrow\RR$ satisfying:
	\begin{enumerate}
		\item $\varphi(z)\geq-1$ for every $z\in M$ and $\varphi(\sigma)=-1$.
		\item $\varphi(z)\geq0$ for every $z\in M\setminus U(\sigma,\delta_1)$.
		\item $\varphi(z)\geq L^{-1}$ for every $z\in V(p)$.
	\end{enumerate}
	Then for   the generic point $x_n$ of $\mu_n$, let 
	$$
	0~\leq~ T_1^1<T_2^1<T_3^1<T_4^1~<~
	\cdots\cdots
	~<~T_1^m<T_2^m<T_3^m<T_4^m~<~\cdots\cdots
	$$
	be the time sequence where $\{T_1^m\}_m$ are all the times that $\phi_t(x_n)$ enters $U(\sigma,\delta_1)$ and $T_2^m$ is the time when $\phi_t(x_n)$ enters $U(\sigma,\delta_1)$ after $T_1^m$; $T_3^m$ is the time when $\phi_t(x_n)$ enters $V(p)$ after $T_2^m$ and $T_4^m$ is the time  when $\phi_t(x_n)$ leaves $V(p)$ after $T_3^m$. 
	
	For every $m$, we have 
	$$
	(T_4^m-T_3^m)~\geq~L\cdot (T_2^m-T_1^m).
	$$
	This implies
	\begin{align*}
	\int\varphi{\rm d}\mu_n~& =~
	\lim_{T\rightarrow+\infty}\frac{1}{T}
	\int_{0}^{T}\varphi\big(\phi_t(x_n)\big){\rm d}t \\
	& \geq~
	\lim_{T_4^m\rightarrow+\infty}
	\frac{1}{T_4^m}\sum_{i=1}^{m}\left[(-1)\cdot(T_2^m-T_1^m)+ L^{-1}\cdot(T_4^m-T_3^m)\right] \\
	& \geq~0.
	\end{align*}
	However, we have $\int\varphi{\rm d}\delta_{\sigma}=\varphi(\sigma)=-1$. This contradicts to $\mu_n\rightarrow\delta_\sigma$, thus $\delta_\sigma$ is isolated in $\mathcal{M}_{erg}(X)$.
\end{proof}

\subsection{Proof of Theorem~\ref{Thm:B} and Corollary~\ref{Cor:Cr-entropy-support}}\label{Section:ergodic-measure-Lorenz}

We split the proof of Theorem~\ref{Thm:B} into two parts: the dense part and the residual part. We first state  two lemmas.
Recall by Proposition~\ref{prop:Lorenz}, any geometric Lorenz attractor of a vector field $X\in\mathscr{X}^r(M^3)$ is a homoclinic class and any two periodic orbits are homoclinically related. Hence we have the following lemma which is a combination of~\cite[Proposition 4.7 $\&$ Remark 4.6]{abc}.

\begin{lemma}\label{Lem:convex of periodic measure}
	Assume $r\in\mathbb{N}_{\geq 2}\cup\{\infty\}$ and $X\in\mathscr{X}^r(M^3)$ admits a geometric Lorenz attractor $\Lambda$, then $\overline{\mathcal{M}_{per}(\Lambda)}$ is convex.
\end{lemma}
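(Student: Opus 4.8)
The plan is to derive this from the structure of $\Lambda$ furnished by Proposition~\ref{prop:Lorenz} — $\Lambda$ is a singular hyperbolic homoclinic class in which every periodic orbit is hyperbolic and any two periodic orbits are homoclinically related — together with K. Sigmund's density theorem on basic sets. First I would reduce to a two-point statement: since the closure of a convex set is convex, it suffices to prove that for any two hyperbolic periodic points $p,q\in\Lambda$ and any $t\in[0,1]$ one has $t\,\delta_{\orb(p)}+(1-t)\,\delta_{\orb(q)}\in\overline{\mathcal{M}_{per}(\Lambda)}$. Granting this, given arbitrary $\mu,\nu\in\overline{\mathcal{M}_{per}(\Lambda)}$ one chooses periodic measures $\delta_{\orb(p_n)}\to\mu$ and $\delta_{\orb(q_n)}\to\nu$, applies the two-point statement to the combinations $t\,\delta_{\orb(p_n)}+(1-t)\,\delta_{\orb(q_n)}$, and lets $n\to\infty$, using that $\overline{\mathcal{M}_{per}(\Lambda)}$ is closed, to conclude $t\mu+(1-t)\nu\in\overline{\mathcal{M}_{per}(\Lambda)}$.

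For the two-point statement the case $\orb(p)=\orb(q)$ is trivial, so assume the two orbits are distinct. By Proposition~\ref{prop:Lorenz} they are homoclinically related, hence by the classical Birkhoff--Smale construction there is a transitive locally maximal hyperbolic set (a horseshoe) $K$ of $X$ containing $\orb(p)\cup\orb(q)$. Next I would check $K\subset\Lambda$: every periodic point of $K$ is homoclinically related to $p$, so it lies in the homoclinic class $H(p)=\Lambda$, and since periodic points are dense in $K$ and $\Lambda$ is closed we get $K=\overline{\per(K)}\subset\Lambda$ (alternatively one uses directly that $\Lambda$, being an attractor, absorbs $W^u(\orb(p))$ and the heteroclinic orbits between $\orb(p)$ and $\orb(q)$). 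Finally, $K$ is a basic set of an Axiom A flow: it is the suspension of a hyperbolic horseshoe $\widetilde K$ of a diffeomorphism over a continuous roof function. By Sigmund's theorem for such basic sets (\cite{Sigmund-flow}; or via the one-to-one correspondence between $\mathcal{M}_{inv}(K)$ and $\mathcal{M}_{inv}(\widetilde K)$ of \cite{parry-pollicott} together with \cite{Sigmund}, exactly as in Claim~\ref{Claim:connect per-measure}), the periodic measures are dense in $\mathcal{M}_{inv}(K)$, i.e. $\overline{\mathcal{M}_{per}(K)}=\mathcal{M}_{inv}(K)$. As $\mathcal{M}_{inv}(K)$ is convex and contains $\delta_{\orb(p)}$ and $\delta_{\orb(q)}$, it contains $t\,\delta_{\orb(p)}+(1-t)\,\delta_{\orb(q)}$, whence
$$
t\,\delta_{\orb(p)}+(1-t)\,\delta_{\orb(q)}\in\overline{\mathcal{M}_{per}(K)}\subset\overline{\mathcal{M}_{per}(\Lambda)}.
$$

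Most of this is soft (the reduction to pairs, the appeal to Sigmund, the convexity of $\mathcal{M}_{inv}(K)$); the step where the hypotheses genuinely bite, and the one I expect to be the main obstacle, is producing a \emph{single} horseshoe $K$ that contains both periodic orbits \emph{and} is contained in $\Lambda$. This is precisely where one uses that in a geometric Lorenz attractor any two periodic orbits are homoclinically related and that $\Lambda$, being an attractor, absorbs the relevant unstable and heteroclinic orbits; it is the flow-version of~\cite[Proposition 4.7 \& Remark 4.6]{abc}.
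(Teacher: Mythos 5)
Your proof is correct. The paper gives no argument for this lemma at all---it simply cites \cite[Proposition 4.7 \& Remark 4.6]{abc}---and what you have written is exactly the flow version of that argument (and the same mechanism the paper already uses in Claim~\ref{Claim:connect per-measure}): reduce to a pair of periodic orbits, use that they are homoclinically related to embed both into a single hyperbolic horseshoe $K\subset\Lambda$, invoke Sigmund's density of periodic measures on the basic set $K$ to realize the convex combination, and pass to the limit using that $\overline{\mathcal{M}_{per}(\Lambda)}$ is closed.
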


Next lemma states that periodic measures is dense in the ergodic measure space of Lorenz attractors for $C^r$-generic $X\in\mathscr{X}^r(M^3)$.  Its proof is by a classical Baire argument, which we postpone in the end of this section.
\begin{lemma}\label{Lem:density in erg}
	For every $r\in\mathbb{N}_{\geq 2}\cup\{\infty\}$, there exists a $C^r$-residual subset $\mathcal{R}^r\subset \mathscr{X}^r(M^3)$ such that if $X\in\mathcal{R}^r$ admits a Lorenz attractor $\Lambda$, then $\mathcal{M}_{erg}(\Lambda)\subset\overline{\mathcal{M}_{per}(\Lambda)}$.
\end{lemma}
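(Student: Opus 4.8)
The plan is to isolate the unique singular measure and get everything else for free. For \emph{any} $X\in\mathscr{X}^r(M^3)$ admitting a geometric Lorenz attractor $\Lambda$, Proposition~\ref{prop:Lorenz} says $\Lambda$ is a singular hyperbolic homoclinic class whose only singularity is $\sigma$; hence Proposition~\ref{Prop:dense periodic measure} already gives $\mu\in\overline{\mathcal{M}_{per}(\Lambda)}$ for every ergodic $\mu\neq\delta_\sigma$. Thus the lemma is equivalent to the single assertion that, for $X$ in a suitable residual set, $\delta_\sigma\in\overline{\mathcal{M}_{per}(\Lambda)}$, and this is the only step that uses genericity.

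To approximate $\delta_\sigma$ by periodic measures supported inside $\Lambda$, I would chain two perturbations. First, since by Proposition~\ref{prop:Lorenz} the set $\mathcal{O}^r$ of vector fields exhibiting a geometric Lorenz attractor is open and the attractor persists as one, apply Theorem~\ref{thm:Lorenz-connecting} with the critical element $p$ taken to be $\sigma$ itself (equivalently, Corollary~\ref{Cor:dense-loop}): there is $Y$ arbitrarily $C^r$-close to $X$, still admitting a geometric Lorenz attractor $\Lambda_Y$ with singularity $\sigma$, for which $\orb(z,\phi_t^Y)\subset W^u(\sigma,\phi_t^Y)\cap W^s(\sigma,\phi_t^Y)$ is a homoclinic loop $\Gamma$. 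Because $U$ is forward invariant under $\phi_t^Y$ and the backward orbit of $z$ enters $W^u_{\it loc}(\sigma)\subset U$, the set $\Gamma\cup\{\sigma\}$ is a compact subset of the attracting region $U$. Second, apply Proposition~\ref{Prop:loop} to $Y$: it produces $Z$ arbitrarily $C^r$-close to $Y$ (hence to $X$) with a hyperbolic periodic orbit $\gamma$ such that $\delta_\gamma$ is arbitrarily close to $\delta_\sigma$ and $\overline\gamma$ is Hausdorff-close to $\Gamma\cup\{\sigma\}$; in particular $\overline\gamma\subset U$, so $\gamma\subset\Lambda_Z=\bigcap_{t>0}\phi_t^Z(U)$, which is again a geometric Lorenz attractor. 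This shows the set of vector fields possessing a geometric Lorenz attractor carrying a periodic orbit whose measure is arbitrarily close to the singular measure is dense in $\mathcal{O}^r$.

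For the Baire-category conclusion I would fix a countable family $\{U_k\}$ of candidate attracting regions such that every geometric Lorenz attractor of every vector field is of the form $\bigcap_{t>0}\phi_t^X(U_k)$ for some $k$ (a routine localization over attracting regions). For $k,n\in\mathbb{N}$ let $\mathcal{A}_k$ be the set of vector fields for which $U_k$ is an attracting region and $\Lambda_{X,k}:=\bigcap_{t>0}\phi_t^X(U_k)$ is a geometric Lorenz attractor (open by Proposition~\ref{prop:Lorenz}), with unique singularity $\sigma_{X,k}$, and let $\mathcal{G}_{k,n}\subset\mathcal{A}_k$ be the set of $X$ admitting a hyperbolic periodic orbit $\gamma\subset\Lambda_{X,k}$ with $d(\delta_\gamma,\delta_{\sigma_{X,k}})<1/n$. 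Hyperbolic periodic orbits and the hyperbolic singularity persist and depend continuously on $X$, staying inside the continuation of $\Lambda_{X,k}$, so $\mathcal{G}_{k,n}$ is open; the previous paragraph makes it dense in $\mathcal{A}_k$. Hence $\widetilde{\mathcal{G}}_{k,n}:=\mathcal{G}_{k,n}\cup\bigl(\mathscr{X}^r(M^3)\setminus\overline{\mathcal{A}_k}\bigr)$ is open and dense, and $\mathcal{R}^r:=\bigcap_{k,n}\widetilde{\mathcal{G}}_{k,n}$ is residual. If $X\in\mathcal{R}^r$ has a geometric Lorenz attractor $\Lambda$, pick $k$ with $\Lambda=\Lambda_{X,k}$; then $X\in\mathcal{A}_k$, so $X\in\mathcal{G}_{k,n}$ for all $n$, i.e.\ $\delta_\sigma\in\overline{\mathcal{M}_{per}(\Lambda)}$, and combining with the first paragraph finishes the proof.

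The hard part is entirely in the second paragraph: producing periodic orbits \emph{inside} $\Lambda$ that equidistribute to $\delta_\sigma$. This needs both Theorem~\ref{thm:Lorenz-connecting}, to create the homoclinic loop inside the attractor by a $C^r$-small perturbation supported off $\sigma$ and off the relevant stable manifolds, and Proposition~\ref{Prop:loop}, to unfold this loop into long periodic orbits shadowing it; neither has a ready substitute in the $C^r$ setting, where Hayashi-type connecting lemmas are unavailable. The reduction via Proposition~\ref{Prop:dense periodic measure} and the Baire bookkeeping (including making the statement hold for all geometric Lorenz attractors of $X$ at once) are then routine.
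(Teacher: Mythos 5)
Your proof is correct in substance and follows the same strategy as the paper: reduce via Proposition~\ref{Prop:dense periodic measure} to the single assertion $\delta_\sigma\in\overline{\mathcal{M}_{per}(\Lambda)}$, obtain a $C^r$-dense set of vector fields with a periodic orbit near $\delta_\sigma$ inside the attractor by chaining Corollary~\ref{Cor:dense-loop} with Proposition~\ref{Prop:loop}, and then finish by a Baire argument. The one place where you and the paper diverge is the Baire bookkeeping. You localize over the attractor itself, defining $\mathcal{A}_k$ as those $X$ for which a fixed open set $U_k$ is an attracting region whose maximal invariant set is a geometric Lorenz attractor, and requiring the approximating periodic orbit to lie in $\Lambda_{X,k}$ a priori. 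The paper instead localizes only over neighborhoods $U_m$ of the singularity, asking merely that $X$ have some periodic orbit $q$ with $d(\delta_{\orb(q)},\delta_{\sigma_X})<1/n$ (no containment clause), and derives $\orb(q)\subset\Lambda$ \emph{a posteriori}: since $\delta_{\orb(q)}$ concentrates near $\sigma$ and $\sigma$ lies in the interior of the attracting region $U$, the periodic orbit must enter $U$, hence (being periodic and $U$ being forward invariant) lie entirely in $\Lambda$.

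This difference is not just cosmetic; it exposes a small gap in your ``routine localization'' step. A fixed countable family $\{U_k\}$ cannot consist of sets that are \emph{attracting regions} (boundary everywhere transverse to $X$, pointing inward) for all relevant vector fields simultaneously, because transversality of $\partial U_k$ to $X$ is a condition on $X$. The fix is standard: replace ``attracting region'' by the weaker ``trapping region'' condition $\phi_1^X(\overline{U_k})\subset U_k$, which is $C^0$-open in $X$ and still forces $\bigcap_{t>0}\phi_t^X(U_k)$ to be the attractor. Given any geometric Lorenz attractor $(\Lambda,U)$ of $X$, taking $V_1=\phi_1^X(U)$, $V_2=\phi_2^X(U)$ and a finite union $U_k$ of basis elements with $\overline{V_2}\subset U_k\subset\overline{U_k}\subset V_1$ produces such a trapping region from a countable family. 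With that modification, openness and density of $\mathcal{G}_{k,n}$ in $\mathcal{A}_k$ go through as you describe, and the proof closes. The paper's choice of tracking only the singularity avoids having to address this point at all, at the modest cost of needing the a posteriori containment argument.
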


Now we manage to prove  Theorem~\ref{Thm:B} and Corollary~\ref{Cor:Cr-entropy-support}.
In certain cases, we have to perturb a vector field in different open regions. The results in~\cite[Section 2]{PR} guarantee that every vector field $X\in\mathscr{X}^r(M)$ ($r\in\mathbb{N}\cup\{\infty\}$) admits a basis of neighborhoods $\mathcal{U}$ satisfying the following property:
\begin{itemize}
	\item[$(F)$]: {\it For any two perturbations $Y_1,Y_2\in \mathcal{U}$ of $X$, if there exists two open sets $W_1,W_2$ such that $Y_i=X|_{M\setminus W_i}$ for $i=1,2$ and $W_1\cap W_2=\emptyset$, then the composed perturbation $Y$ is also in $\mathcal{U}$, where $Y=X|_{M\setminus {W_1\cup W_2}}$ and $Y=Y_i|_{W_i}$ for $i=1,2$.}
\end{itemize}

\begin{proof}[Proof of Theorem~\ref{Thm:B}]	
	Let $r\in\mathbb{N}_{\geq 2}\cup\{\infty\}$. We splits the proof of Theorem~\ref{Thm:B} into dense part and residual part.
   \paragraph{Dense part:}
	By Proposition~\ref{prop:Lorenz} and robustness of hyperbolic singularities, one can easily prove that there exists an open and dense subset $\mathcal{O}$ in $\mathscr{X}^r(M^3)$ such that the number of geometric Lorenz attractors of every $X\in\mathcal{O}$ is robustly constant. That is to say, for any $X\in\mathcal{O}$, there exists a $C^r$ neighborhood $\mathcal{V}_X$ of $X$ such that the number of geometric Lorenz attractors of any $Z\in \mathcal{V}_X$ is a constant which equals that of $X$.
	
	To prove the dense part, we only need to show that for any $X\in\mathcal{O}$ and for any $C^r$ neighborhood $\mathcal{V}$ of $X$, there exists a vector field $Y\in\mathcal{V}$ such that every geometric Lorenz attractor $\Lambda_Y$ of $Y$ satisfies 
	$$\overline{\mathcal{M}_{per}(\Lambda_Y)}\subsetneqq \overline{\mathcal{M}_{erg}(\Lambda_Y)}\subsetneqq \mathcal{M}_{inv}(\Lambda_Y). $$
	
	By hyperbolicity of singularities, we have that $X$ admits  only finitely many geometric Lorenz attractors $\Lambda_1,\Lambda_2,\cdots,\Lambda_k$ with pairwise disjoint attracting regions $U_1,U_2,\cdots,U_k$. Shrinking $\mathcal{V}$ if necessary, we assume that every $Z\in\mathcal{V}$ admits exactly $k$ geometric Lorenz attractors $\Lambda_{1,Z},\Lambda_{2,Z},\cdots,\Lambda_{k,Z}$ with the same attracting regions $U_1,U_2,\cdots,U_k$. Moreover, we assume that $\mathcal{V}$ satisfies the property $(F)$ above. 
	For each $Z\in\mathcal{V}$ and for each $i=1,2,\cdots,k$, we denote by $\sigma_i^{Z}$ the unique singularity in $\Lambda_{i,Z}$ and denote by $\sigma_i=\sigma_i^{X}$ for simplicity. 
	We first show the following claim.
	
	\begin{claim}\label{Claim:Y_i}
		For each $i=1,2,\cdots,k$, there exists $Y_i\in\mathcal{V}$ satisfying that $Y_i|_{M\setminus U_i}=X|_{M\setminus U_i}$ and $W^u(\sigma_i^{Y_i},\phi_t^{Y_i})\setminus \{\sigma_i^{Y_i}\}\subset W^s(\orb(p_i^{Y_i},\phi_t^{Y_i}))$ where $p_i^{Y_i}$ is a hyperbolic periodic point of $\phi_t^{Y_i}$.
	\end{claim}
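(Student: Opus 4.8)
The plan is to realize the two branches of $W^u(\sigma_i)$ one at a time, using the $C^r$-connecting theorem (Theorem~\ref{thm:Lorenz-connecting}), and then to glue the two localized perturbations together with the help of property $(F)$ of $\mathcal{V}$. Fix a periodic point $p\in\Lambda_i$, which exists by Proposition~\ref{prop:Lorenz} since $\Lambda_i$ is a homoclinic class, and let $z^+,z^-\in W^u(\sigma_i)\cap\Sigma$ be the two points of Remark~\ref{rk:unstable}, so that $W^u(\sigma_i)\setminus\{\sigma_i\}=\orb(z^+,\phi_t^X)\cup\orb(z^-,\phi_t^X)$ and $\phi_t^X(z^\pm)\notin\Sigma$ for every $t<0$. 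Note that neither $z^+$ nor $z^-$ can be periodic (their $\alpha$-limit is $\{\sigma_i\}$), so in particular $z^\pm\notin\orb(p)$.

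First I would apply Theorem~\ref{thm:Lorenz-connecting} to $X$, the point $z^-$, the critical element $p$, and a constant $\delta_0^-$ small enough that $B(z^-,\delta_0^-)\subset U_i$ and $B(z^-,\delta_0^-)$ is disjoint from the compact set $\{\phi_t^X(z^+):t\le0\}\cup\{\sigma_i\}$. This produces $Y^-\in\mathcal{V}$ with $Y^-|_{M\setminus W^-}\equiv X|_{M\setminus W^-}$, where $W^-=B(z^-,\delta^-)\subset U_i$ is disjoint from $\orb(p)\cup\{\sigma_i\}$, and with $\gamma^-:=\orb(z^-,\phi_t^{Y^-})\subset W^u(\sigma_i,Y^-)\cap W^s(\orb(p),Y^-)$, whose closure is $\gamma^-\cup\{\sigma_i\}\cup\orb(p)$. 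By Proposition~\ref{prop:Lorenz}, $\Lambda_{i,Y^-}$ is again a geometric Lorenz attractor containing $\sigma_i$ and $\orb(p)$, and since $W^-$ misses the backward orbit of $z^+$ the point $z^+$ still satisfies $z^+\in W^u(\sigma_i,Y^-)\cap\Sigma$ with $\phi_t^{Y^-}(z^+)\notin\Sigma$ for $t<0$. Now either $z^+\in\gamma^-$, in which case $\orb(z^+,\phi_t^{Y^-})\subseteq\gamma^-\subseteq W^s(\orb(p),Y^-)$ and we are already done with $Y_i=Y^-$; or $z^+\notin\overline{\gamma^-}$ (the equality $z^+=\phi_t^{Y^-}(z^-)$ with $t\le0$ would force $z^+=\phi_t^X(z^-)\notin\Sigma$, which is false). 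In the second case choose $\rho>0$ with $B(z^+,\rho)\cap\big(\overline{\gamma^-}\cup W^-\big)=\emptyset$ and $B(z^+,\rho)\subset U_i$, put $W^+=B(z^+,\rho)$, and apply Theorem~\ref{thm:Lorenz-connecting} to the ambient vector field $Y^-$, the point $z^+$, the critical element $p\in\Lambda_{i,Y^-}$, the constant $\rho$, and a neighborhood of $Y^-$ small enough that the output $Y^{**}$ satisfies $\|Y^{**}-Y^-\|_{C^r}<\varepsilon$, where $\varepsilon>0$ is picked so that every vector field equal to $X$ off $W^+$ and within $C^r$-distance $\varepsilon$ of $X$ lies in $\mathcal{V}$.

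This $Y^{**}$ equals $Y^-$ outside a ball $B(z^+,\delta'')\subseteq W^+$, and on that ball $Y^-=X$; hence $Y^{**}$ is the composition of the perturbation $Y^-$ (supported in $W^-$) with a perturbation $Y^{(2)}$ of $X$ supported in $W^+$, and $\|Y^{(2)}-X\|_{C^r}\le\|Y^{**}-Y^-\|_{C^r}<\varepsilon$, so $Y^{(2)}\in\mathcal{V}$. Since $W^-\cap W^+=\emptyset$ and $Y^-,Y^{(2)}\in\mathcal{V}$, property $(F)$ gives that this composition, which one checks coincides with $Y^{**}$ everywhere (on $W^+\setminus B(z^+,\delta'')$ all three of $Y^{**},Y^{(2)},X$ agree because $Y^-=X$ on $W^+$), lies in $\mathcal{V}$; set $Y_i:=Y^{**}$. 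Then the perturbation of $Y_i$ is supported in $W^-\cup W^+\subset U_i$, so $Y_i|_{M\setminus U_i}\equiv X|_{M\setminus U_i}$; Theorem~\ref{thm:Lorenz-connecting} yields $\orb(z^+,\phi_t^{Y_i})\subset W^u(\sigma_i,Y_i)\cap W^s(\orb(p),Y_i)$ with $\orb(p,\phi_t^{Y_i})=\orb(p,\phi_t^X)$; and because $Y_i=Y^-$ on the neighborhood $M\setminus W^+$ of $\overline{\gamma^-}$, the orbit $\gamma^-$ is still a $\phi_t^{Y_i}$-orbit connecting $\sigma_i$ to $\orb(p)$, so $\orb(z^-,\phi_t^{Y_i})=\gamma^-\subset W^s(\orb(p),Y_i)$ too. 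As $\dim W^u(\sigma_i,Y_i)=1$ and $z^+,z^-$ sit on its two distinct branches, $W^u(\sigma_i^{Y_i},\phi_t^{Y_i})\setminus\{\sigma_i^{Y_i}\}=\orb(z^+,\phi_t^{Y_i})\cup\orb(z^-,\phi_t^{Y_i})\subset W^s(\orb(p),\phi_t^{Y_i})$, with $p_i^{Y_i}:=p$ hyperbolic because it belongs to the singular hyperbolic set $\Lambda_{i,Y_i}$.

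The main obstacle is the bookkeeping that keeps the two localized perturbations supported in disjoint balls inside $U_i$ while guaranteeing that neither destroys the connection produced by the other: this forces one to perform the $z^-$-connection first, to choose the radius of the $z^+$-perturbation afterwards (small relative to the compact set $\overline{\gamma^-}$ and to $z^-$), and to invoke property $(F)$ to make sure the composed vector field still belongs to the prescribed neighborhood $\mathcal{V}$. The two potentially degenerate configurations ($z^+$ lying on the connecting orbit of $z^-$, or $z^+$ on its backward orbit) are either excluded outright by $\phi_t^X(z^\pm)\notin\Sigma$ for $t<0$ or else make the conclusion immediate, so they cause no real difficulty.
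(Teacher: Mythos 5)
Your proof is correct and follows the same two-step strategy as the paper's: connect $W^u(\sigma_i)$ one branch at a time by two applications of Theorem~\ref{thm:Lorenz-connecting} with perturbations supported in disjoint balls around $z^+$ and $z^-$. The one place where you diverge, and where you do more work than necessary, is in arranging that the second perturbation keeps you inside $\mathcal{V}$: you shrink the neighborhood of $Y^-$ so that the output is within a carefully chosen $\varepsilon$ of $Y^-$, re-express it as a perturbation $Y^{(2)}$ of $X$ supported in $W^+$, argue $Y^{(2)}\in\mathcal{V}$, and then invoke property $(F)$ to compose. The paper instead simply notes that $\mathcal{V}$ is open and contains the first output $Y^+$, so $\mathcal{V}$ is itself a $C^r$-neighborhood of $Y^+$, and Theorem~\ref{thm:Lorenz-connecting} applied with base vector field $Y^+$ and neighborhood $\mathcal{V}$ directly returns $Y_i\in\mathcal{V}$ — no property $(F)$, no $\varepsilon$-bookkeeping, no re-expressing. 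Your route is valid (one does check that the $(F)$-composed field coincides with $Y^{**}$), just longer; what it buys is nothing here, though it illustrates that property $(F)$ alone suffices if one were unwilling to use the openness of $\mathcal{V}$. Your explicit treatment of the degenerate case $z^+\in\gamma^-$ is fine; the paper handles it implicitly by choosing the second ball to avoid the entire orbit $\phi_t^{Y^+}(z^+)$, which is legitimate since that orbit's closure is $\orb(z^+)\cup\{\sigma\}\cup\orb(p)$ and misses $z^-$.
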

    \begin{proof}[Proof of Claim~\ref{Claim:Y_i}]
    	Fix $i\in\{1,2,\cdots,k\}$. We denote by $\Lambda=\Lambda_i,U=U_i$ and $\sigma=\sigma_i$ to simplify notations.
    	
    	Let $\Sigma\subset U$ be the cross section of $\Lambda$ and $z^+,z^-\in W^u(\sigma)\cap\Sigma$ be the two different points satisfying $\phi_t^X(z^+),\phi_t^X(z^-)\notin\Sigma$ for any $t<0$. 
    	Take a hyperbolic periodic point $p\in\Lambda$. 
    	Take a constant $\delta^+>0$ such that 
    	\begin{itemize}
    	    \item $B(z^{+},\delta^+)\subset U\setminus (\orb(p,\phi^X_t)\cup\{\sigma\})$, 
    	    %\item $B(z^{+},\delta)\cap B(z^{-},\delta)=\emptyset$,
    	    \item $\phi^X_t(z^-)\notin B(z^{+},\delta^+)$ for any $t\leq 0$.
    	\end{itemize}
    	
    	By Theorem~\ref{thm:Lorenz-connecting}, there exists $Y^+\in\mathcal{V}$ satisfying that 
    	$$Y^+|_{M\setminus B(z^{+},\delta^+)}=X|_{M\setminus B(z^{+},\delta^+)} \text{~~and~~} \orb(z^+,\phi_t^{Y^+})\subset W^u(\sigma,\phi_t^{Y^+})\cap W^s(\orb(p),\phi_t^{Y^+}).$$
    	
        Note that $\mathcal{V}$ is also a $C^r$-neighborhood of $Y^+$ with $z^-\in W^u(\sigma,\phi^{Y^+}_t)$ and $p\in\per(\phi^{Y^+}_t)$. Moreover, by the choice of $\mathcal{V}$, the maximal invariant compact set of $\phi^{Y^+}_t$ is still a geometric Lorenz attractor with singularity $\sigma$ and containing $\orb(p)$ in it. 
        We take another constant $\delta^->0$ satisfying that 
        \begin{itemize}
        	\item $B(z^{-},\delta^-)\subset U\setminus (\orb(p,\phi^X_t)\cup\{\sigma\})$, 
        	%\item $B(z^{-},\delta)\cap B(z^{-},\delta)=\emptyset$,
        	\item $\phi^{Y^+}_t(z^+)\notin B(z^{-},\delta^-)$ for any $t\in\mathbb{R}$.
        \end{itemize}
    	Applying Theorem~\ref{thm:Lorenz-connecting} again for the vector field $Y^+$ and the neighborhood $\mathcal{V}$, there exists $Y_i\in\mathcal{V}$ such that 
    	$$Y_i|_{M\setminus B(z^{-},\delta^-)}=Y^+|_{M\setminus B(z^{-},\delta^-)} \text{~~and~~} \orb(z^-,\phi_t^{Y_i})\subset W^u(\sigma,\phi_t^{Y_i})\cap W^s(\orb(p),\phi_t^{Y_i}).$$
    	By the choice of $\delta^-$, we know that it satisfies also 
    	$$\orb(z^+,\phi_t^{Y_i})\subset W^u(\sigma,\phi_t^{Y_i})\cap W^s(\orb(p),\phi_t^{Y_i}).$$
    	The vector field $Y_i$ satisfies Claim~\ref{Claim:Y_i}.
    \end{proof}
	
	Finally, let $Y$ be the composed perturbation of $X$ satisfying that 
	$$Y|_{U_i}=Y_i|_{U_i}, ~\text{ for each~} i=1,2,\cdots,k,~~~\text{ and~~~}  Y|_{M\setminus (\bigcup_{1\leq i\leq k}U_i)}=X|_{M\setminus (\bigcup_{1\leq i\leq k}U_i)}.$$
	By the property $(F)$, we have that $Y\in\mathcal{V}$ since the attracting regions $\{U_i\}_{1\leq i\leq k}$ are pairwise disjoint. By Proposition~\ref{Prop:kick-out singularity}, for each $1\leq i\leq k$, the Dirac measure $\delta_{\sigma_i}=\delta_{\sigma_i^Y}$ is isolated in $\mathcal{M}_{erg}(Y)$.  In particular, we have 
	$$\overline{\mathcal{M}_{per}(\Lambda_{i,Y})}\subsetneqq \overline{\mathcal{M}_{erg}(\Lambda_{i,Y})}\subsetneqq \mathcal{M}_{inv}(\Lambda_{i,Y}). $$
	The proof of dense part is completed.
	
	\paragraph{Residual part:}
	Let $\mathcal{R}^r\subset\mathscr{X}^r(M^3)$ be the residual subset from Lemma~\ref{Lem:density in erg}. Assume $X\in\mathcal{R}^r$ admits a geometric Lorenz attractor $\Lambda$. 
	We first show that $$\mathcal{M}_{inv}(\Lambda)=\overline{\mathcal{M}_{erg}(\Lambda)}=\overline{\mathcal{M}_{per}(\Lambda)}.$$
	
	Take $\mu\in \mathcal{M}_{inv}(\Lambda)$. 
	By the Ergodic Decomposition Theorem, there exists a sequence of measures $\mu_n$ converging to $\mu$ in the weak*-topology where $\mu_n\in\mathcal{M}_{inv}(\Lambda)$ is a convex sum of finitely many ergodic measures supported on $\Lambda$ for each $n\in\mathbb{N}$. By Lemma~\ref{Lem:density in erg}, there exists a sequence of measures $\nu_n$ converging to $\mu$ in the weak*-topology where $\nu_n\in\mathcal{M}_{inv}(\Lambda)$ is a convex sum of finitely many periodic measures supported on $\Lambda$ for each $n\in\mathbb{N}$.
	By Lemma~\ref{Lem:convex of periodic measure},  we have that $\overline{\mathcal{M}_{per}(\Lambda)}$ is convex. Hence $\mu\in\overline{\mathcal{M}_{per}(\Lambda)}$. This implies $\mathcal{M}_{inv}(\Lambda)=\overline{\mathcal{M}_{erg}(\Lambda)}=\overline{\mathcal{M}_{per}(\Lambda)}$.

	Note that $\mathcal{M}_{erg}(\Lambda)$ coincides with $\overline{\mathcal{M}_{per}(\Lambda)}\cap \mathcal{M}_{erg}(\Lambda)$. Hence the path connectedness of $\mathcal{M}_{erg}(\Lambda)$ follows directly from Proposition~\ref{prop:Lorenz} and Proposition~\ref{Prop:connectedness}. 
	This completes the residual part and hence the proof of Theorem~\ref{Thm:B}.
\end{proof}

\begin{proof}[Proof of Corollary~\ref{Cor:Cr-entropy-support}]
	Let $\mathcal{R}^r$ be the residual subset of $\mathscr{X}^r(M^3)$ from Theorem~\ref{Thm:B}.
	Assume  $X\in \mathcal{R}^r$ admits a geometric Lorenz attractor $\Lambda$. By~\cite[Proposition 5.1]{abc}, $\mathcal{M}_{erg}(\Lambda)$ is a $G_{\delta}$ subset in $\mathcal{M}_{inv}(\Lambda)$. Since $\mathcal{M}_{inv}(\Lambda)=\overline{\mathcal{M}_{erg}(\Lambda)}$, we have that $\mathcal{M}_{erg}(\Lambda)$ is a residual subset in $\mathcal{M}_{inv}(\Lambda)$. On the other hand, periodic orbits are dense in $\Lambda$ since $\Lambda$ is a homoclinic class. Then by~\cite[Proposition 5.3]{abc}, the set of invariant measures with support equal to $\Lambda$ is a residual subset $\mathcal{M}_1$ in $\mathcal{M}_{inv}(\Lambda)$. By~\cite{PYY}, the entropy map 
	$$h_{(\cdot)}\colon \mathcal{M}_{inv}(\Lambda)\rightarrow \mathbb{R}$$
	$$~~~~~~~~~~\mu\mapsto h_{\mu}(X)$$
	is upper semi-continuous. Hence the continuity points of the entropy map $h_{(\cdot)}$ form a residual subset $\mathcal{M}_2$ in $\mathcal{M}_{inv}(\Lambda)$. Moreover, since $\mathcal{M}_{inv}(\Lambda)=\overline{\mathcal{M}_{per}(\Lambda)}$ and every periodic measure has $0$-entropy, we have that every $\mu\in\mathcal{M}_2$ satisfies $h_{\mu}(X)=0$.
	Let $$\mathcal{M}_{res}=\mathcal{M}_{erg}(\Lambda)\cap \mathcal{M}_1\cap \mathcal{M}_2.$$ 
	Then $\mathcal{M}_{res}$ is a residual subset in $\mathcal{M}_{inv}(\Lambda)$ and the residual part  concludes.
	
	To prove the dense part, since $\mathcal{M}_{inv}(\Lambda)=\overline{\mathcal{M}_{per}(\Lambda)}$, we only need to show that any periodic measure supported on $\Lambda$ is approximated by ergodic measures in $\mathcal{M}_{erg}(\Lambda)$ with positive entropy. 
	The idea follows~\cite[Theorem 3.5 part (c)]{abc} and we give an explanation here. 
	For any periodic orbit $\orb(p)$ in $\Lambda$, we have that $\Lambda=H(p)$ by Proposition~\ref{prop:Lorenz}. Hence $\orb(p)$ exhibits some transverse homoclinic orbit which implies that there are hyperbolic horseshoes contained in $\Lambda$ arbitrarily close to this homoclinic orbit. That means points in the hyperbolic horseshoes spends arbitrarily large fractions of their orbits shadowing $\orb(p)$ as closely as required. Hence measures supported on such hyperbolic horseshoes are close to $\delta_{\orb(p)}$ in the weak*-topology. On the other hand, every hyperbolic horseshoe must support an ergodic measure with positive entropy. Thus $\delta_{\orb(p)}$ is approximated by ergodic measures in $\mathcal{M}_{erg}(\Lambda)$ with positive entropy.
\end{proof}

\begin{remark}\label{Rem:entropy and support}
	In the proof of Corollary~\ref{Cor:Cr-entropy-support}, the residual subsets $\mathcal{M}_1$ and $\mathcal{M}_2$ of $\mathcal{M}_{inv}(\Lambda)$ exist for every geometric Lorenz attractor $\Lambda$ of every $X\in\mathscr{X}^r(M^3)$. Therefore the measures with zero entropy and full support form a residual subset $\mathcal{M}_0$ in $\mathcal{M}_{inv}(\Lambda)$ for every geometric Lorenz attractor $\Lambda$. Here is an explanation:
	
	\noindent First, periodic points are dense in $\Lambda$ since it is a homoclinic class by Proposition~\ref{prop:Lorenz}. By ~\cite[Proposition 5.3]{abc}, the set of invariant measures with full support  is a residual subset $\mathcal{M}_1$ in $\mathcal{M}_{inv}(\Lambda)$. Second, by Proposition~\ref{Prop:dense periodic measure},  the set $\mathcal{M}_{per}(\Lambda)\cup \{\delta_{\sigma}\}$ is dense in $\mathcal{M}_{erg}(\Lambda)$. This implies that every $\mu\in\mathcal{M}_{inv}(\Lambda)$ is the limit of invariant measures $\{\mu_n\}$ where each $\mu_n$ is the convex sum of finitely many periodic measures  supported on $\Lambda$ together with $\delta_{\sigma}$. Thus by the upper semi-continuity of the entropy map~\cite{PYY}, the invariant measures with zero entropy form a residual subset $\mathcal{M}_2$ in $\mathcal{M}_{inv}(\Lambda)$. 
\end{remark}

To end this section, we prove Lemma~\ref{Lem:density in erg}.

\begin{proof}[Proof of Lemma~\ref{Lem:density in erg}]
	Assume $X\in\mathscr{X}^r(M^3)$ admits a Lorenz attractor $\Lambda$ with attracting region $U$ and let $\mathcal{U}$ be the $C^r$ neighborhood of $X$ satisfying Proposition~\ref{prop:Lorenz} for $\Lambda$ and $U$.
	Recall that  $\Lambda$ is a homoclinic class and any two periodic orbits are homoclinically related by Proposition~\ref{prop:Lorenz}.
	Applying Corollary~\ref{Cor:dense-loop} and Proposition~\ref{Prop:loop}, we have the following claim.
	\begin{claim}\label{Claim:approach singularity}
		For any $C^r$-neighborhood $\mathcal{V}$ of $X$ and any $\varepsilon>0$, there exists $Y\in\mathcal{V}$ which admits a periodic point $p$ satisfying that  $d(\delta_{\orb(p,\phi^Y_t)},\delta_{\sigma_Y})<\varepsilon$.
	\end{claim}

	Lemma~\ref{Lem:density in erg} concludes by a standard Baire argument. Take a countable basis $\{O_n\}_{n\in\mathbb{N}^+}$ of $M^3$ and let $\{U_n\}_{n\in\mathbb{N}^+}$ be the countable family which consists of all the possible unions of finitely many elements in $\{O_n\}_{n\in\mathbb{N}^+}$. For each pair of integers $n,m\in\mathbb{N}^+$, we define the following two subsets of $\mathscr{X}^r(M^3)$:
	\begin{itemize}
		\item $\mathcal{F}_{m,n}$: $Y\in \mathcal{F}_{m,n}$ if and only if $Y$ admits a hyperbolic singularity $\sigma_Y$ in $U_m$ and there exists a periodic point $q_Y$ of $Y$ such that $d(\delta_{\orb(q_Y,\phi^Y_t)},\delta_{\sigma_Y})<\frac{1}{n}$;
		
		\item $\mathcal{N}_{m,n}$: $Y\in \mathcal{F}_{m,n}$ if and only if there exists a neighborhood $\mathcal{V}_Y\subset\mathscr{X}^r(M^3)$ of $Y$ such that for any $Z\in\mathcal{V}_Y$, one of the following cases satisfies
		\begin{itemize} 
			\item[--] either $Z$ admits no hyperbolic singularity in $U_m$;
			%\item[--] $Z$ admits a singularity $\sigma_Z\in U_m$ such that $C(\sigma_Z,Z)$ is not a Lorenz-like attractor;
			\item[--] or for any singularity $\sigma_Z$ of $Z$ in $U_m$ and for any periodic point $q_Z$ of $Z$, it satisfies $d(\delta_{\orb(q_Z,\phi^Z_t)},\delta_{\sigma_Z})\geq\frac{1}{n}$.
		\end{itemize}
	\end{itemize}
	
	\begin{claim}\label{Claim:Cr-open-dense}
		$\mathcal{F}_{m,n}\cup \mathcal{N}_{m,n}$ is open and dense in $\mathscr{X}^r(M^3)$.
	\end{claim}
	\begin{proof}
		By the robustness of hyperbolic critical elements, we have that $\mathcal{F}_{m,n}$ is open in $\mathscr{X}^r(M^3)$. The openness of $\mathcal{N}_{m,n}$ is by definition.
		
		For any vector field $Y\in\mathscr{X}^r(M^3)\setminus\mathcal{N}_{m,n}$, there exists a sequence of vector fields $Y_i\in\mathscr{X}^r(M^3)$ such that $Y_i$ admits a hyperbolic singularity $\sigma_{Y_i}$ in $U_m$ and there exists a periodic point $q_{Y_i}$ of $Y_i$ such that $d(\delta_{\orb(q_{Y_i})},\delta_{\sigma_{Y_i}})<\frac{1}{n}$. Again by the robustness of hyperbolic critical elements, we have that $Y_i\in \mathcal{F}_{m,n}$ which implies that $Y\in \overline{\mathcal{F}_{m,n}}$. This shows that $\mathcal{F}_{m,n}\cup \mathcal{N}_{m,n}$ is dense in $\mathscr{X}^r(M^3)$.
	\end{proof}
	
	By Claim~\ref{Claim:Cr-open-dense}, the set  
	$$\mathcal{R}^r=\bigcap_{m,n\in\mathbb{N}^+}(\mathcal{F}_{m,n}\cup \mathcal{N}_{m,n})$$
	is a residual subset in $\mathscr{X}^r(M^3)$. 
	Let $X\in\mathcal{R}^r$ and $\Lambda$ be a Lorenz-like attractor of $X$ with singularity $\sigma$. We show that $\mathcal{M}_{erg}(\Lambda)\subset \overline{\mathcal{M}_{per}(\Lambda)}$.
	
	Take $\mu\in \mathcal{M}_{erg}(\Lambda)$. If $\mu\neq\delta_{\sigma}$, then $\mu\in\overline{\mathcal{M}_{per}(\Lambda)}$ by Proposition~\ref{Prop:dense periodic measure}.  Otherwise $\mu=\delta_\sigma$. By the hyperbolicity of $\sigma$, there exist a neighborhood $U_\sigma$ of $\sigma$ and a neighborhood $\mathcal{U}_X\subset\mathscr{X}^r(M^3)$ of $X$ such that any $Y\in\mathcal{U}_X$ admits a unique singularity $\sigma_Z$ in $U_\sigma$ where $\sigma_Z$ is the continuation of $\sigma$. Take $m\in\mathbb{N}^+$ such that $U_m\subset U_{\sigma}$ is a neighborhood of $\sigma_Z$ for all $Z\in \mathcal{U}_X$. By Claim~\ref{Claim:approach singularity}, we have that $X\notin \mathcal{N}_{m,n}$ for any $n\in\mathbb{N}^+$. Hence $X\in \mathcal{F}_{m,n}$ for any $n\in\mathbb{N}^+$. This implies that there exists a sequence of periodic points $p_n$ of $X$ such that $\delta_{\orb(p_n)}$ converges to $\delta_{\sigma}$ in the weak*-topology. Moreover, since $\Lambda$ is a Lorenz attractor, we have that $p_n\in\Lambda$ for each $n$ large enough. This shows that $\delta_{\sigma}\in \overline{\mathcal{M}_{per}(\Lambda)}$ and completes the proof of Lemma~\ref{Lem:density in erg}.
\end{proof}

\appendix
\section{Appendix: Robustness of geometric Lorenz attractors}\label{Section:robust}

In appendix~\ref{Section:robust}, we prove Proposition \ref{prop:Lorenz}, which shows the geometric Lorenz attractor is a homoclinic class and the vector fields exhibiting a geometric Lorenz attractor forms a $C^2$-open subset in $\mathscr{X}^r(M^3)$ for every $r\in\mathbb{N}_{\geq 2}\cup\{\infty\}$.

\begin{proof}[Proof of Proposition \ref{prop:Lorenz}]
	The proof of $\Lambda$ is a homoclinic class and every two periodic orbits of $\Lambda$ are homoclinic relies on the fact that the one-dimensional quotient map $f$ is eventually onto (Lemma \ref{lem:evenually-onto}). We refer to \cite[Theorem 1]{lorenz-homoclinic} for showing that $\Lambda$ is a homoclinic class. For every two periodic points $p,q\in\Lambda$, we can assume $p,q\in\Sigma$. Let $\gamma^u\subset W^u_{\it loc}(\orb(p),\phi_t^X)\cap\Sigma$ be the segment containing $p$ in $\Sigma$. Let $q=(x_q,y_q)\in\Sigma$, then $\{x_q\}\times[-1,1]\subset W^s_{\it loc}(\orb(q),\phi_t^X)$. We only need to show there exists some $n>0$ such that 
	$$
	P^n(\gamma^u)~\cap~\{x_q\}\times[-1,1]~\neq~\emptyset.
	$$
	Here $P:\Sigma\setminus l\rightarrow\Sigma$ is the Poincar\'e map. Let $J=\pi_x(\gamma^u)$. This is equivalent to there exists $n>0$ such that $x_q\in f^n(J)$, which has been proved in Lemma \ref{lem:evenually-onto}.
	
	\vskip1mm
	
	Now we prove the second part of this proposition. We only need to show for every $X\in\mathscr{X}^2(M^3)$ which exhibits a geometric Lorenz attractor with attracting region $U$, there exists a $C^2$-neighborhood $\cU$ of $X$, such that for every $Y\in\cU$, the maximal invariant set $\Lambda_Y=\bigcap_{t>0}\phi_t^Y(U)$ is a geometric Lorenz attractor.
	
	Let $\lambda_0>\sqrt{2}$ be the constant fixed in Notation \ref{notat:section} satisfying $f'(x)>\lambda_0$ for every $x\in[-1-\epsilon,1+\epsilon]\setminus\{0\}$. Let $\eta>0$ be a fixed constant satisfying 
	\begin{align}\label{equ:eta}
	(1+\eta)^{-6}\cdot\lambda_0>\sqrt{2}
	\qquad {\rm and} \qquad
	\sup_{(x,y)\in\Sigma\setminus l}\big|\partial H(x,y)/\partial x\big|<(1+\eta)^{-6}.
	\end{align}

	Since $U\subseteq M^3$ is an attracting region of $X$ and$\Lambda=\bigcap_{t>0}\phi_t^X(U)$ is singular hyperbolic $T_{\Lambda}M^3=E^{ss}\oplus E^{cu}$, there exists a $C^2$-neighborhood $\cU_1$ of $X$, such that for every $Y\in\cU_1$, $U$ is an attracting region of $Y$ and $\Lambda_Y=\bigcap_{t>0}\phi_t^Y(U)$ is singular hyperbolic with splitting $T_{\Lambda_Y}M^3=E^{ss}_Y\oplus E^{cu}_Y$. Moreover, The singularity $\sigma_Y\in\Lambda_Y$ also has three eigenvalues $\lambda_{1,Y}<\lambda_{2,Y}<0<\lambda_{3,Y}$ satisfying $\lambda_{1,Y}+\lambda_{3,Y}<0$ and $\lambda_{2,Y}+\lambda_{3,Y}>0$ for every $Y\in\cU_1$.

	Let $\Sigma=[-1-\epsilon,1+\epsilon]^2$ be the cross section of $X$ and $\Lambda$ defined in Lemma \ref{lem:big-section}.
	By the continuity of the singularity $\sigma_Y$ and the continuity of the local stable and unstable manifolds of $\sigma_Y$, shrinking $\cU_1$ if necessary, we can further assume that for every $Y\in\cU_1$, it satisfies
	\begin{enumerate}
		\item For every $z\in U\setminus W^s_{\it loc}(\sigma_Y,\phi_t^Y)$, there exists $T>0$, such that $\phi_T^Y(z)\in\Sigma$.
		\item The local stable manifold of $\sigma_Y$ intersects $\Sigma$ with $l_Y=W^s_{\it loc}(\sigma_Y,\phi_t^Y)\cap\Sigma$ being $C^2$-close to $l=\{0\}\times[-1-\epsilon,1+\epsilon]$.
		\item The unstable manifold of $\sigma_Y$ intersects $\Sigma$ with $z_Y^+,z_Y^-\in\Sigma$ which are close to $z^+,z^-$ respectively.
		\item The Poincar\'e map $P_Y:\Sigma\setminus l_Y\rightarrow\Sigma$ is well defined and satisfies
		$$
		\overline{P_Y(\Sigma\setminus l_Y)}~=~
		P_Y(\Sigma\setminus l_Y)\cup\{z_Y^+,z_Y^-\}~\subseteq~
		{\rm Int}(\Sigma).
		$$
		\item For every $(x,y)\in\Sigma\setminus l_Y$, we have
		$$
		\frac{\partial\pi_x\circ P_Y}{\partial x}(x,y)>(1+\eta)^{-1}\lambda_0, \quad
		\frac{\partial\pi_y\circ P_Y}{\partial x}(x,y)<(1+\eta)^{-3}, 
		\quad {\rm and} \quad
		\frac{\partial\pi_y\circ P_Y}{\partial y}(x,y)<1.
		$$
	\end{enumerate}
	
	For every $Y\in\cU_1$, we denote $\cF^{ss}_Y$ the strong stable foliation of $Y$ tangent to $E^{ss}_Y$ in $U$, and $\cF^s_Y=\bigcup_{t\in\RR}\phi_t^Y(\cF^{ss}_Y)\cap U$ the 2-dimensional stable foliation in $U\setminus\cF^{ss}_Y(\sigma_Y)$ defined as Lemma \ref{lem:stable}. Let 
	$$
	\cF^s_{Y,\Sigma}~=~\cF^s_Y~\cap~\Sigma
	$$
	be the stable foliation on $\Sigma$ induce by $\cF^s_Y$. 
	By the classical stable manifold theorem \cite{hps},
	for every $z=(x_z,y_z)\in\Sigma$, the leaf $\cF^s_{Y,\Sigma}(z)$ converges to $\{x_z\}\times[-1-\epsilon,1+\epsilon]$ in $C^2$-topology as $Y\rightarrow X$ in $\mathscr{X}^2(M^3)$. To be precise, shrinking $\cU$ if necessary, for every $x_z\in[-1-\epsilon/2,1+\epsilon/2]$, there exists a $C^2$-smooth function $\varphi^s_z:[-1-\epsilon,1+\epsilon]\rightarrow(-\epsilon/2,\epsilon/2)$, such that
	$$
	\cF^s_{Y,\Sigma}(z)=
	{\rm Graph}\big(\varphi^s_z\big)=
	\big\{x_z+\varphi^s_z(y):~y\in[-1-\epsilon,1+\epsilon]\big\}.
	$$
	The function $\varphi^s_z$ satisfies $\varphi^s_z(y_z)=0$ and $\varphi^s_z\rightarrow 0$ in $C^2$-topology as $Y\rightarrow X$ in $\mathscr{X}^2(M^3)$. 
	
	Let $h^s_{Y,\Sigma}$ be the holonomy map induced by $\cF^s_{Y,\Sigma}$ on $\Sigma$. Now we show $h^s_{Y,\Sigma}$ converges to the holonomy map of the foliation
	$\big\{\{x\}\times[-1-\epsilon,1+\epsilon]:~x\in[-1-\epsilon,1+\epsilon]\big\}$ in $C^1$-topology.
	
	\begin{claim}\label{clm:holonomy}
		The holonomy map $h^s_{Y,\Sigma}$ is $C^{1+}$-smooth on $\Sigma$. There exists a $C^2$-neighborhood $\cU_2$ of $X$, such that for every $Y\in\cU_2$ and every two points $z_i=(x_i,y_i)\in\cF^s_{Y,\Sigma}(z)$, $i=1,2$, let 
		$$
		J_i=(a_i,b_i)\times\{y_i\}~\subset~ [-1-\epsilon/2,1+\epsilon/2]\times[-1-\epsilon,1+\epsilon]
		$$
		be two intervals containing $x_i\in(a_i,b_i)$,
		such that $h^s_{Y,\Sigma}(z_1)=z_2$ and $h^s_{Y,\Sigma}(J_1)=J_2$, then
		$$
		(1+\eta)^{-1}~<~
		Dh^s_{Y,\Sigma}(z_1)
		~<~(1+\eta).
		$$
		This implies $h^s_{Y,\Sigma}$ converges to $h^s_{X,\Sigma}\equiv{\rm Id}_x$ in $C^1$-topology as $Y$ converges to $X$ in $\mathscr{X}^2(M^3)$.
	\end{claim}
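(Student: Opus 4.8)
The plan is to treat $h^s_{Y,\Sigma}$ as the holonomy of a contracting $\phi^Y_t$-invariant foliation, borrow the $C^{1+}$-regularity statement of \cite[Lemma 7.1]{am2} verbatim, and then track how all the relevant quantities depend on the parameter $Y$. For the $C^{1+}$-smoothness: for every $Y\in\cU_1$ the set $\Lambda_Y$ is singular hyperbolic and $\sigma_Y$ still has real eigenvalues obeying the same inequalities as $\sigma$, so the strong stable foliation $\cF^{ss}_Y$ is well defined in $U$ (as in \cite[Proposition 3.2]{am1}) and, exactly as in Lemma~\ref{lem:stable}, $\cF^s_Y=\bigcup_{t}\phi_t^Y(\cF^{ss}_Y)\cap U$ is a $\phi^Y_t$-invariant $2$-foliation with $C^r$ leaves whose holonomies — in particular $h^s_{Y,\Sigma}$ — are $C^{1+}$ by \cite[Lemma 7.1]{am2}. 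The only thing to observe is that the domination/bunching rates entering that proof (which control both the Hölder exponent and the Hölder constant of $Dh^s_{Y,\Sigma}$) vary continuously with $Y$, so the conclusion is uniform on a $C^2$-neighborhood of $X$.

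For the quantitative estimate $(1+\eta)^{-1}<Dh^s_{Y,\Sigma}(z_1)<1+\eta$ I would use the standard invariant-series representation of the derivative of a stable holonomy. Let $R_Y$ be the Poincar\'e return map of $\phi_t^Y$ to $\Sigma$ and let $z_1,z_2$ lie on the same leaf of $\cF^s_{Y,\Sigma}$ (so $\mathrm{dist}(R_Y^nz_1,R_Y^nz_2)\to 0$ exponentially, the leaf being uniformly contracted). Then
\[
Dh^s_{Y,\Sigma}(z_1)=\prod_{n\ge 0}\frac{\tau_Y(R_Y^nz_2)}{\tau_Y(R_Y^nz_1)},
\]
where $\tau_Y$ is the infinitesimal transverse expansion coefficient of $R_Y$ (its derivative in the direction transverse to $\cF^s_{Y,\Sigma}$, i.e.\ essentially $f_Y'$ along the orbit), a $C^1$ function bounded away from $0$; the product converges with bounded distortion because of the $C^{1+}$-regularity and the exponential approach. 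For $Y=X$ the foliation on $\Sigma$ is \emph{literally} the product foliation $\{x\}\times[-1-\epsilon,1+\epsilon]$, the iterates $R_X^nz_1$ and $R_X^nz_2$ have the same $x$-coordinate, every factor equals $1$, and $Dh^s_{X,\Sigma}\equiv 1$. As $Y\to X$ in $\mathscr{X}^2(M^3)$ the functions $\tau_Y$, the maps $R_Y$, and the leaves all converge to their $X$-counterparts (the leaves in $C^2$, by \cite{hps}), with uniform geometric control on the tail of the product, so $Dh^s_{Y,\Sigma}\to 1$ uniformly over $\Sigma$ and over admissible pairs $z_1,z_2$. Shrinking $\cU_1$ to a suitable $\cU_2$ then gives the stated bound, and the same argument with $\eta$ replaced by an arbitrarily small $\eta'$ gives $\sup_\Sigma|Dh^s_{Y,\Sigma}-1|\to 0$; combined with the $C^0$-convergence $h^s_{Y,\Sigma}\to h^s_{X,\Sigma}={\rm Id}_x$ coming from $\varphi^s_z\to 0$ in $C^2$, this yields the asserted $C^1$-convergence.

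The main obstacle is the middle step — controlling the \emph{derivative} of the holonomy rather than the positions of the leaves. A family of foliations can have leaves converging in $C^\infty$ while the holonomies fail to converge in $C^1$, because the holonomy derivative is a genuinely transverse first-order datum, measuring the variation of leaf spacing, which is invisible on any single leaf. The way around this is precisely to re-run the distortion argument behind \cite[Lemma 7.1]{am2} with $Y$ as a parameter, using that every factor of the invariant product above is exactly $1$ for the unperturbed product foliation, so the product stays near $1$ under $C^2$-perturbation. Finally, note that the constant $\eta$ fixed in~(\ref{equ:eta}) is exactly the slack that this small holonomy distortion is allowed to consume in the subsequent verification that $\Lambda_Y$ is again a geometric Lorenz attractor: the bounds $(1+\eta)^{-6}\lambda_0>\sqrt 2$ and $\sup|\partial H/\partial x|<(1+\eta)^{-6}$ are what let the estimates on $P_Y$ from $\cU_1$ survive the change of coordinate by $h^s_{Y,\Sigma}$.
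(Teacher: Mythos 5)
Your proposal takes a genuinely different route from the paper on the central estimate, so let me compare the two.

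The paper works in the ambient three-manifold: it computes the Jacobian of the \emph{two-dimensional} strong-stable holonomy $h^{ss}_Y$ between small $2$-disks $D^Y_\delta$ and $D^Y_2$ using the classical absolute-continuity formula
$\jac(h^{ss}_Y)(w_1)=\prod_{i\ge 0}\jac\bigl(D\phi^Y_{-1}|_{T\phi^Y_i(D^Y_2)}\bigr)\big/\jac\bigl(D\phi^Y_{-1}|_{T\phi^Y_i(D^Y_\delta)}\bigr)$
built out of the \emph{time-one flow map} $\phi^Y_{-1}$, splits the logarithm into a finite head (controlled by $C^1$-convergence of $\phi^Y_{-1}$, the disks, and the orbit points as $Y\to X$) and an exponentially small tail (uniform over a $C^2$-neighborhood), and only at the very end projects down to $Dh^s_{Y,\Sigma}$ by dividing by the ratio of Lebesgue densities on the two disks. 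You instead stay entirely inside $\Sigma$ and propose the return-map-based product $Dh^s_{Y,\Sigma}(z_1)=\prod_{n\ge 0}\tau_Y(R_Y^n z_2)/\tau_Y(R_Y^n z_1)$. This is a legitimate and in some ways more economical decomposition (no two-dimensional detour, no density ratio at the end, and it cleanly exposes why all factors equal $1$ at $Y=X$).

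The price you pay, and the place where your writeup has a genuine gap, is the singularity. The factor $\tau_Y$ is not, as you state, merely ``a $C^1$ function bounded away from $0$'': it is unbounded, because the return time and hence $f'_Y$ blow up as the orbit of $z_1,z_2$ enters an arbitrarily small neighborhood of $l_Y$, which it does infinitely often for a dense set of base points. Convergence of your product therefore rests on a bounded-distortion estimate for $\log\tau_Y$ whose H\"older constant is \emph{not} uniformly bounded near $l_Y$; one must show that the exponential contraction of $\mathrm{dist}(R_Y^n z_1,R_Y^n z_2)$ beats the loss of regularity of $\log\tau_Y$ on those returns, and that this beating is uniform in $Y\in\cU_2$. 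That is precisely the delicate content of the eigenvalue conditions $\lambda_1+\lambda_3<0$, $\lambda_2+\lambda_3>0$ exploited in \cite[Lemma 7.1]{am2}; your appeal to ``re-run the distortion argument with $Y$ as a parameter'' correctly identifies where the work is, but you never actually state the formula's validity or run the estimate. The paper sidesteps this entirely: the factors in its product are Jacobians of $D\phi^Y_{-1}$ on $2$-dimensional subspaces, which are uniformly bounded together with their first derivatives regardless of proximity to $\sigma$, so the tail bound (its equation (A.3)) is elementary. If you keep the return-map formulation you should (i) justify the product formula — it should involve the derivative of $R_Y$ restricted to the iterated transversals $R_Y^k(J_i)$, which after one iterate lie in the invariant cone $\cC_\alpha$, rather than a scalar ``transverse expansion coefficient''; and (ii) make the distortion estimate explicit near $l_Y$ with uniform constants over $\cU_2$. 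The rest — the use of $Dh^s_{X,\Sigma}\equiv 1$, the head/tail split, the $C^0$-convergence from $\varphi^s_z\to 0$, and the role of $\eta$ in (\ref{equ:eta}) — matches the paper's reasoning.
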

	
	\begin{proof}[Proof of the claim]
		Due to a beautiful observation in \cite[Lemma 7.1]{am2},
		the holonomy map $h^s_Y$ of the stable foliation $\cF^s_{Y,\Sigma}$ is $C^{1+}$-smooth in the cross section $\Sigma$. The key fact is that $h^s_Y$ is absolutely continuous with respect to the transversal Lebesgue measure. Thus the fact that $\cF^s_{Y,\Sigma}$ is codimension one implies $h^s_Y$ is $C^{1+}$-smooth.
		Following the estimation in \cite[Lemma 7.1]{am2}, we can show that the holonomy map $h^s_Y$ converges to $h^s_X$ in $C^1$-topology as $Y\rightarrow X$ in $\mathscr{X}^2(M^3)$.
		
		Recall that the stable foliation $\cF^{ss}_Y$ is H\"older continuous. There exists a H\"older continuous function $T:J_2\rightarrow\RR$, such that for every $w_1\in J_1$ and $w_2=h^s_Y(w_1)\in J_2$, they satisfy 
		$$
		\phi_{T(w_2)}^X(w_2)~\in~\cF^{ss}_{Y,{\it loc}}(w_1).
		$$
		Here we can assume the function $|T|<1$ on $J_2$ by flowing the cross section $\Sigma$. 
		
		Since the strong stable foliation
		$\cF^{ss}_Y$ is preserved by $\phi_t^Y$, for $t$ small enough, we have 
		$$
		\phi_{T(w_2)+t}^X(w_2)~\in~
		\cF^{ss}_{Y,{\it loc}}(\phi_t^Y(w_1)).
		$$
		This implies there exists $\delta>0$, such that if 
		$$
		I_{\delta}~=~[x_1-\delta,x_1+\delta]\times\{y_1\}
		~\subset~(a_1,b_1)\times\{y_1\}~=~J_1,
		$$
		and $D^Y_{\delta}=\bigcup_{t\in[-\delta,\delta]}\phi_t^Y(I_\delta)$, then the holonomy map of the local strong stable foliation $\cF^{ss}_Y$ is well defined:
		$$
		h^{ss}_Y:~D^Y_{\delta}\longrightarrow 
		D^Y_2=\bigcup_{t\in[-1,1]}\phi_t^Y(J_2),
		$$
		and satisfies $h^{ss}_Y(w_1)=\phi_{T(w_2)}^X(w_2)$ where $w_2=h^s_Y(w_1)\in J_2$.
		
		The holonomy map $h^{ss}_Y:D^Y_{\delta}\rightarrow D^Y_2$ is absolutely continuous, see \cite{BP,pugh-shub}. Moreover, let $m_1$ and $m_2$ be Lebesgue measures induced by the Riemannian metric restricted on $D^Y_{\delta}$ and $D^Y_2$ respectively. Then for every $w_1\in D^Y_{\delta}$ and $w_2=h^{ss}_Y(w_1)\in D^Y_2$,
		the Jacobian of $h^{ss}_Y(w_1)$ is
		\begin{align}\label{equ:abso-continuity}
		\jac\left(h^{ss}_Y\right)(w_1)~=~
		\prod_{i=0}^{+\infty}
		\frac{\jac\left(D\phi_{-1}^Y|_{T_{\phi_i^Y(w_2)}\phi_i^Y(D^Y_2)}\right)}{\jac\left(D\phi_{-1}^Y|_{T_{\phi_i^Y(w_1)}\phi_i^Y(D^Y_{\delta})}\right)},
		\end{align}
		see \cite[Theorem 7.1 \& Remark 7.2]{Pe}. Notice here the infinite product must convergence since $w_2=h^{ss}_Y(w_1)\in\cF^{ss}_{Y,{\it loc}}(w_1)$.
		
		For every $\kappa>0$, there exists a $C^2$-neighborhood $\cV_{\kappa,1}\subset\mathscr{X}^2(M^3)$ of $X$, and an integer $k>0$, such that for every $Y\in\cV_{\kappa,1}$ and $w_2=h^{ss}_Y(w_1)$ with $w_1\in D^Y_{\delta}$, they satisfies
		\begin{align}\label{equ:tail}
		\sum_{i=k+1}^{+\infty}
		\left| \log\jac\left(D\phi_{-1}^Y|_{T_{\phi_i^Y(w_2)}\phi_i^Y(D^Y_2)}\right)-\log\jac\left(D\phi_{-1}^Y|_{T_{\phi_i^Y(w_1)}\phi_i^Y(D^Y_{\delta})}\right) \right| <\frac{\kappa}{4}.
		\end{align}
		Here we use the fact that the distance between $\phi_i^Y(w_1)$ and $\phi_i^Y(w_2)$, and the distance between the tangent bundles $T_{\phi_i^Y(w_1)}\phi_i^Y(D^Y_{\delta})$ and $T_{\phi_i^Y(w_2)}\phi_i^Y(D^Y_2)$ are both exponential convergent to zero.
		
		For every $w\in I_{\delta}$ and every $t\in[-\delta,\delta]$, we denote $w_t^X=\phi_t^X(w)\in D_{\delta}^X$ and $w_t^Y=\phi_t^Y(w)\in D_{\delta}^Y$ for $Y\in\cV_{\kappa,1}$.
		Let $w_2^X=h^{ss}_X(w_t^X)$ and $w_2^Y=h^{ss}_Y(w_t^Y)$.
		Equation \ref{equ:abso-continuity} and Equation \ref{equ:tail} show that
		\begin{align*}
		&\left| \log\jac\left(h^{ss}_Y\right)(w_t^Y)~-~
		\log\jac\left(h^{ss}_X\right)(w_t^X) \right| \\
		~\leq~&\sum_{i=0}^{k} \left|\log\jac\left(D\phi_{-1}^Y|_{T_{\phi_i^Y(w_2^Y)}\phi_i^Y(D^Y_2)}\right)- \log\jac\left(D\phi_{-1}^X|_{T_{\phi_i^X(w_2^X)}\phi_i^X(D^X_2)}\right)\right| \\
		&+\sum_{i=0}^{k} \left|\log\jac\left(D\phi_{-1}^Y|_{T_{\phi_i^Y(w_t^Y)}\phi_i^Y(D^Y_{\delta})}\right)- \log\jac\left(D\phi_{-1}^X|_{T_{\phi_i^X(w_t^X)}\phi_i^X(D^X_{\delta})}\right)\right|+\frac{\kappa}{2}.
		\end{align*}
		When $Y\rightarrow X$ in $C^2$-topology, we have $D\phi_{-1}^Y\rightarrow D\phi_{-1}^X$, and for every $i=0,\cdots,k$,
		\begin{itemize}
			\item $\phi_i^Y\big(D^Y_{\delta}\big)\rightarrow \phi_i^X\big(D^X_{\delta}\big)$ and $\phi_i^Y\big(D^Y_2\big)\rightarrow \phi_i^X\big(D^X_2\big)$ in $C^1$-topology;
			\item $\phi_i^Y\big(w_t^Y\big)\rightarrow \phi_i^X\big(w_t^X\big)$ and 
			$\phi_i^Y\big(w_2^Y\big)\rightarrow \phi_i^X\big(w_2^X\big)$.
		\end{itemize}
		So there exists a $C^2$-neighborhood $\cV_{\kappa,2}\subset\cV_{\kappa,1}$ of $X$, such that for every $Y\in\cV_{\kappa,2}$,
		$$
		\left| \log\jac\left(h^{ss}_Y\right)(w_t^Y)~-~
		\log\jac\left(h^{ss}_X\right)(w_t^X) \right|<\kappa.
		$$
		Thus we have $\jac\left(h^{ss}_Y\right)\rightarrow\jac\left(h^{ss}_X\right)$ as $Y\rightarrow X$ in $\mathscr{X}^2(M^3)$.
		
		\vskip1mm
		
		Let $\big\{(x,t): x\in[x_1-\delta,x_1+\delta],t\in[-\delta,\delta]\big\}$ be the coordinate on $D_{\delta}^Y$, which is defined as $w=(x,t)$ if $w_0=(x,y_1)\in I_{\delta}$ and $w=\phi_t^Y(w_0)$. Then there exists a smooth function $v^Y_1$ on $D_{\delta}^Y$, such that the Lebesgue measure $m_1$ on $D_{\delta}^Y$ induced by the Riemannian metric is
		$$
		m_1(x,t)~=~v^Y_1(x,t){\rm d}x{\rm d}t.
		$$
		Similarly, we have the coordinate $\big\{(x,t): x\in(a_2,b_2),t\in[-1,1]\big\}$ on $D_2$ and the smooth function $v^Y_2$ satisfying the Lebesgue measure $m_2(x,t)=v^Y_2(x,t){\rm d}x{\rm d}t$ on $D_2^Y$.
		
		According to the proof of \cite[Lemma 7.1]{am2}, for every $w_1=(x_1,y_1)\in J_1$ and $w_2^Y=(x_2^Y,y_2)=h^s_{Y,\Sigma}(w_1)\in J_2$, let $\phi_{T(w_2^Y)}^Y(w_2)\in\cF^{ss}_{Y,{\it loc}}(w_1)\cap D^Y_2$, then $w_1=(x_1,0)$ in the $(x,t)$-coordinate in $D^Y_{\delta}$, and $h^{ss}_Y(w_1)=\phi_{T(w_2^Y)}^Y(w_2)=(x_2,T(w_2^Y))$ in the $(x,t)$-coordinate in $D^Y_2$. The derivate of the holonomy map $h^s_{Y,\Sigma}$ is equal to
		\begin{align}\label{equ:holo-deriv}
		Dh^s_{Y,\Sigma}(w_1)~=~
		\lim_{\delta\rightarrow 0}
		\frac{\int_{D^Y_{\delta}}\jac\left(h^{ss}_Y\right)v_1(x,t) ~{\rm d}x{\rm d}t}{\int_{h^{ss}_Y\left(D^Y_{\delta}\right)}v_2(x,t) ~{\rm d}x{\rm d}t}
		~=~\jac\left(h^{ss}_Y\right)(w_1)\cdot
		\frac{v_1(x_1,0)}{v_2(x_2,T(w_2^Y))}.
		\end{align}
		When $Y\rightarrow X$ in $\mathscr{X}^2(M^3)$, $h^{ss}_Y(w_1)\rightarrow h^{ss}_X(w_1)$, so $x_2^Y\rightarrow x_2^X$ and $T(w_2^Y)\rightarrow T(w_2^X)$. Moreover, $D^Y_{\delta}\rightarrow D^X_{\delta}$ and $D^Y_2\rightarrow D^X_2$ imply $v^Y_i\rightarrow v^X_i$ for $i=1,2$. Thus Equation \ref{equ:holo-deriv} implies
		$$
		Dh^s_{Y,\Sigma}\longrightarrow Dh^s_{X,\Sigma}\equiv{\rm Id}_x
		\qquad {\rm as} \qquad
		Y\longrightarrow X~{\rm in}~\mathscr{X}^2(M^3).
		$$
		Thus for $\eta>0$ decided in (\ref{equ:eta}), there exists a $C^2$-neighborhood $\cU_2$ of $X$, such that for every $Y\in\cU_2$, it satisfies
		$$
		(1+\eta)^{-1}~<~
		Dh^s_{Y,\Sigma}
		~<~(1+\eta).
		$$ 
		This finishes the proof of Claim~\ref{clm:holonomy}.
	\end{proof}
	
	For every $Y\in\cU_1\cap\cU_2$, we give a new coordinate $\{(x',y')\}$ on the cross section $\Sigma$, such that the Poincar\'e map of $\phi_t^Y$ associated to $\Sigma$ satisfies the definition of Lorenz attractors in this coordinate. For every $z=(x_z,y_z)\in\Sigma$, we fix the $y'$-coordinate of $z$ to $y'_z=y_z$.
	
	For $\eta>0$ defined in (\ref{equ:eta}), there exists $\epsilon_{\eta}>0$, such that for every $x^-,x^+,x_0\in\RR$ satisfying
	$$
	|x^--(-1)|<\epsilon_{\eta}, \qquad
	|x^+-1|<\epsilon_{\eta}, \qquad {\rm and} \qquad
	|x_0|<\epsilon_{\eta},
	$$
	there exists a diffeomorphism $h:\RR\rightarrow\RR$ satisfying
	$$
	h(x^-)=-1, \quad
	h(x^+)=1, \quad 
	h(x_0)=0, \quad {\rm and} \quad
	(1+\eta)^{-1}<h'(x)<(1+\eta),
	\quad\forall x\in\RR.
	$$
	
	The unstable manifolds of $\sigma_Y$ intersects $\Sigma$ with $z^+_Y\rightarrow z^+=(-1,y^+)$ and $z^-_Y\rightarrow z^-=(1,y^+)$ as $Y\rightarrow X$. Moreover, the intersection between the local stable manifolds of $\sigma_Y$ and $\Sigma$ satisfies $W^s_{\it loc}(\sigma_Y,\phi_t^Y)\cap\Sigma=l_Y\rightarrow l=\{0\}\times[-1-\epsilon,1+\epsilon]$ as $Y\rightarrow X$. There exists a $C^2$-neighborhood $\cU_3$ of $X$, such that for every $Y\in\cU_3$, let $\cF^s_{Y,\Sigma}$ be the stable foliation on $\Sigma$ and $J_0=[-1-\epsilon,1+\epsilon]\times\{0\}$, then the coordinates
	$$
	\cF^s_{Y,\Sigma}(z^+_Y)\cap J_0=(x^-_Y,0), \quad
	\cF^s_{Y,\Sigma}(z^-_Y)\cap J_0=(x^+_Y,0), \quad {\rm and} \quad
	l_Y\cap J_0=(x_{0,Y},0)
	$$
	satisfy
	$$
	|x^-_Y-(-1)|<\epsilon_{\eta}, \qquad
	|x^+_Y-1|<\epsilon_{\eta}, \qquad {\rm and} \qquad
	|x_{0,Y}|<\epsilon_{\eta}.
	$$
	Let $h_Y:\RR\rightarrow\RR$ be the diffeomorphism satisfying
	$$
	h_Y(x^-_Y)=-1, \quad
	h_Y(x^+_Y)=1, \quad 
	h_Y(x_{0,Y})=0, \quad {\rm and} \quad
	(1+\eta)^{-1}<h_Y'(x)<(1+\eta),
	\quad\forall x\in\RR.
	$$
	Then we define $x'$-coordinate on $J_0$ as following: for every $z=(x_z,0)\in J_0$, the $(x',y')$-coordinate of $z$ is 
	$$
	(~x'_z,~y'_z~)~=~\big(~h_Y(x_z),~0~\big).
	$$
	
	Let $J_Y=\{(x,0):~-1\leq h_Y(x)\leq 1\}\subseteq J_0$, and 
	$$
	\Sigma_Y~=~\left(\bigcup_{z\in J_Y}\cF^s_{Y,\Sigma}\right)
	~\cap~
	\big\{(x,y)\in\Sigma:|y|\leq1\big\}.
	$$
	For every $z=(x_z,y_z)\in\Sigma_Y$, let $w=(x_w,0)=\cF^s_{Y,\Sigma}(z)\cap J_0$, then we define the $x'$-coordinate of $z$ as 
	$$
	(~x'_z,~y'_z~)~=~(~x'_w,~y_z~)~=~\big(~h_Y(x_w),~y_z~\big).
	$$
	Thus in the coordinate $\big\{(x',y')\big\}$, $\Sigma_Y$ is $C^1$-diffeomorphic to $[-1,1]^2$.
	
	For every $Y\in\cU_1\cap\cU_2\cap\cU_3$, 
    the stable foliation $\cF^s_{Y,\Sigma}$ is invariant under the action of Poincar\'e map, hence the Poincar\'e map $P_Y:\Sigma_Y\setminus l_Y\rightarrow\Sigma_Y$ is well defined and has the form
	$$
	P_Y(x',y')=\big(~f_Y(x'),~H_Y(x',y')~\big),
	\qquad\forall (x',y')\in\Sigma_Y\setminus l_Y.
	$$
	
	From the construction of $(x',y')$-coordinate on $\Sigma_Y$, the one dimensional quotient map $f_Y$ is $C^1$-smooth and satisfies
	$\lim_{x'\rightarrow0^-}f_Y(x')=1$, $\lim_{x'\rightarrow0^+}f_Y(x')=-1$, and $-1<f_Y(x')<1$ for every $x'\in[-1,1]\setminus\{0\}$. Since $Y\in\cU_1$, the Poincar\'e map $P_Y$ satisfies $\partial\pi_x\circ P_Y/\partial x>(1+\eta)^{-1}\lambda_0$ everywhere in the $(x,y)$-coordinate. So for every $z=(x_z,y_z)=(x'_z,y_z)\in\Sigma_Y\setminus l_Y$, it satisfies
	$$
	x'_z=h_Y\circ h^s_{Y,\Sigma}(x_z),
	\qquad i.e. \qquad
	x_z=\big( h^s_{Y,\Sigma} \big)^{-1}\circ h_Y^{-1}(x'_z).
	$$
	Let $w=P_Y(z)=(x_w,y_w)=(f_Y(x'_z),y_w)\in\Sigma_Y$, then 
	$$
	x_w=\pi_x\circ P_Y(z),
	\qquad {\rm and } \qquad
	f_Y(x'_z)=h_Y\circ h^s_{Y,\Sigma}(x_w)
	=h_Y\circ h^s_{Y,\Sigma}\circ\pi_x\circ P_Y(x_z,y_z).
	$$
	Since $y_z=y_z'$, we have
	\begin{align*}
	f_Y'(x'_z) ~&=~h'_Y \cdot Dh^s_{Y,\Sigma} \cdot 
	\frac{\partial \pi_x\circ P_Y}{\partial x} \cdot
	D\big( h^s_{Y,\Sigma} \big)^{-1} \cdot 
	\big(h_Y^{-1}\big)'(x'_z) \\
	~&\geq~ (1+\eta)^{-2}\cdot (1+\eta)^{-1}\lambda_0 \cdot (1+\eta)^{-2} \\
	~&\geq~(1+\eta)\sqrt{2}.
	\end{align*}
	This shows that one-dimensional quotient map $f_Y$ satisfies the property of geometric Lorenz attractors. 
	
	Since $P_Y$ is uniformly contracting in each leaf of $\cF^s_{Y,\Sigma}$, the function $H_Y$ is contracting along the $y'$-coordinate. The proof of rest properties for $H_Y$ is the same with $f_Y$. Thus we take $\cU=\cU_1\cap\cU_2\cap\cU_3$ which is a $C^2$-neighborhood of $X$, and $\Lambda_Y$ is a geometric Lorenz attractor for every $Y\in\cU$.    This completes the proof of Proposition~\ref{prop:Lorenz}.
\end{proof}

\section{Appendix: Singular hyperbolic attractors for $C^1$-vector fields}\label{Section:C1}

For vector fields on a closed smooth manifold $M$ of any dimension in $C^1$-topology, 
one already has the well known perturbation techniques such as Hayashi's connecting lemma \cite{hayashi,wen-xia-connecting,wen-uniform-connecting} and Bonatti-Crovisier's chain connecting lemma~\cite{bc}. 
Following similar arguments of Section~\ref{Section:measure} and applying the $C^1$-connecting lemmas, one obtains same conclusions as in Theorem~\ref{Thm:B} and Corollary~\ref{Cor:Cr-entropy-support} for  singular hyperbolic attractors of $X\in\mathcal{X}^1(M)$. We sketch the results for completeness.
We would like to point out  that researches on singular hyperbolic attractors about robust transitivity, hyperbolicity and  entropy theory can be found in a series works~\cite{ams,CY-Robust-attractor,mo-pa,sgw,PYY}. 
In particular, a more recent work  by S. Crovisier and D. Yang~\cite{CY-Robust-attractor} proved  that there exists an open dense set $\mathcal{U}$ in $\mathscr{X}^1(M)$ such that for any $X\in\mathcal{U}$, any non-trivial singular hyperbolic attractor $\Lambda$ is a robustly transitive attractor and $\Lambda$ is a homoclinic class in which all periodic orbits are homoclinically related to each other.
This ensures that one could follow similar arguments of Section~\ref{Section:measure} and applying the $C^1$-connecting lemmas to study the space of ergodic measures for singular hyperbolic attractors of $X\in\mathcal{X}^1(M)$.
Similarly as in Theorem~\ref{Thm:B}, the space of ergodic measures restricted on  singular hyperbolic attractors is path connected for $C^1$-generic vector fields while on the other hand, for $C^1$-dense vector fields admitting a  singular hyperbolic attractor with a co-dimensional two singular hyperbolic splitting, the singular measure is isolated in the space of ergodic measures, which implies in this case that the ergodic measure space is not connected.

\begin{theorem}\label{Thm:SH-attractor}
	There exist a residual subset $\mathscr{R}\subset\mathscr{X}^1(M)$ and a dense subset $\mathscr{D}\subset\mathscr{X}^1(M)$, such that
	\begin{itemize}
		\item if $X\in\mathscr{R}$ and $\Lambda$ is a singular hyperbolic attractor of $X$, then $\mathcal{M}_{inv}(\Lambda)=\overline{\mathcal{M}_{erg}(\Lambda)}=\overline{\mathcal{M}_{per}(\Lambda)}$. Moreover, the space $\mathcal{M}_{erg}(\Lambda)$ is path connected.
		\item if $X\in\mathscr{D}$ and $\Lambda$ is a singular hyperbolic attractor of $X$ whose singular hyperbolic splitting $T_{\Lambda}M=E^{ss}\oplus E^{cu}$ satisfies $\dim(E^{cu})=2$, then $\overline{\mathcal{M}_{per}(\Lambda)}\subsetneqq\overline{\mathcal{M}_{erg}(\Lambda)}\subsetneqq\mathcal{M}_{inv}(\Lambda)$. Moreover, the space $\mathcal{M}_{erg}(\Lambda)$ is not connected.	
	\end{itemize}
\end{theorem}

Concerning  the residual part of Theorem~\ref{Thm:SH-attractor}, similar properties are obtained in~\cite[Theorem 3.5]{abc} for isolated non-trivial transitive sets of diffeomorphisms. For vector fields, one has to conquer the difficulties caused by the existence of singularities, i.e. to show that the atomic measure of the singularity can be accumulated by periodic measures inside the singular hyperbolic attractor. This could be achieved by applying Hayashi's connecting lemma combined with  the arguments in Section~\ref{Section:ergodic-measure-Lorenz}. An alternative proof can be found in~\cite[Theorem 3.1]{Yang-Zhang2}  where they proved $\mathcal{M}_{inv}(\Lambda)=\overline{\mathcal{M}_{erg}(\Lambda)}=\overline{\mathcal{M}_{per}(\Lambda)}$ for which $\Lambda$ is a non-trivial isolated transitive set for $C^1$-generic vector fields.

Similarly as Theorem~\ref{Thm:B}, the dense part of Theorem~\ref{Thm:SH-attractor}  is a sharp comparison with the residual part. For the dense part, one applies Hayashi's connecting lemma to make the two branches of the  unstable manifolds of the singularity with co-index one lie in the stable manifold of a periodic orbit. Then the arguments follows the dense part in Section~\ref{Section:ergodic-measure-Lorenz}, see also Remark~\ref{Rem:isolated-measure}.
A related work by C. Morales~\cite{Morales} proved that  every non-transitive sectional-Anosov flow with dense periodic orbits exhibits an invariant measure that can not be approximated by ergodic ones. The arguments in~\cite{Morales} is quite different with us where the author considered invariant measures that are not supported on a transitive set, while we obtain here isolated ergodic measures inside an attractor.

The following is a corollary of Theorem~\ref{Thm:SH-attractor} which concerns the support and entropy of measures on singular hyperbolic attractors and whose statement is similar to Corollary~\ref{Cor:Cr-entropy-support}.
\begin{corollary}\label{Cor:entropy-support}
	There exists a residual subset $\mathscr{R}\subset\mathscr{X}^1(M)$, such that for any  $X\in\mathscr{R}$, if $\Lambda$ is a singular hyperbolic attractor of $X$, then:
	\begin{enumerate}
		\item\label{item:residual} there exists a residual subset $\mathcal{M}_{res}$ in $\mathcal{M}_{inv}(\Lambda)$ such that every $\mu\in\mathcal{M}_{res}$ is ergodic with $\supp(\mu)=\Lambda$ and $h_{\mu}=0$.
		
		\item\label{item:dense} there exits a dense subset $\mathcal{M}_{den}$ in $\mathcal{M}_{inv}(\Lambda)$ such that every $\mu\in\mathcal{M}_{den}$ is ergodic with $h_{\mu}>0$.
	\end{enumerate}
\end{corollary}

\begin{remark}
	Recently, C. Bonatti and A. da Luz~\cite{bo-daluz} introduced  another notion called {\it multisingular hyperbolicity} to study more general vector fields with singularities. An interesting question is whether conclusions of Theorem~\ref{Thm:SH-attractor} is true or not for miultisingular hyperbolic chain recurrence classes.	
\end{remark}

\bibliographystyle{plain}

\flushleft{\bf Yi Shi} \\
School of Mathematics, Sichuan University, Chengdu, 610065, China\\
\textit{E-mail:} \texttt{shiyi@scu.edu.cn}\\

\flushleft{\bf Xueting Tian} \\
School of Mathematical Sciences, Fudan University, Shanghai, 200433,  P.R. China\\
\textit{E-mail:} \texttt{xuetingtian@fudan.edu.cn}\\

\flushleft{\bf Xiaodong Wang} \\
School of Mathematical Sciences, CMA-Shanghai, Shanghai Jiao Tong University, Shanghai, 200240, P.R. China\\
\textit{E-mail:} \texttt{xdwang1987@sjtu.edu.cn}\\

\end{document}